\title[Quasicrystals, lattices and dense forests]{Cut--and--project
  quasicrystals, lattices, and dense forests}
\date{}
\author{Faustin Adiceam}
\address{University of Manchester, United Kingdom, {\tt faustin.adiceam@manchester.ac.uk}}
\author{Yaar Solomon}
\address{Ben-Gurion University of the Negev, Israel, {\tt yaars@bgu.ac.il}} 
\author{Barak Weiss}
\address{Tel-Aviv University, Israel, {\tt barakw@post.tau.ac.il}}
\newcommand{\N}{{\mathbb{N}}}
\newcommand{\Z}{{\mathbb{Z}}}
\newcommand{\Q}{{\mathbb {Q}}}
\newcommand{\R}{{\mathbb{R}}}
\newcommand{\T}{{\mathbb{T}}}
\newcommand{\sect}{\mathcal{S}}
\newcommand{\window}{K}
\newcommand{\df}{{\, \stackrel{\mathrm{def}}{=}\, }}
\newcommand{\vre}{\varepsilon}
\newcommand{\Vol}{\mathrm{Vol}}
\newcommand{\covol}{\mathrm{coVol}}
\newcommand{\dist}{\mathrm{dist}}
\newcommand{\norm}[1]{\left\|{#1}\right\|}
\newcommand{\inpro}[2]{\left\langle{#1},{#2}\right\rangle}
\newcommand {\ignore}[1]  {}
\newcommand {\equ}[1]{\eqref{#1}}
\font\sb = cmbx8 scaled \magstep0
\long\def\comfaustin#1{\ifdraft{{\color{blue}\sb Faustin says ``#1'' }}\else\ignorespaces\fi}
\theoremstyle{plain}
\newtheorem{thm}{Theorem}[section]
\newtheorem{lem}[thm]{Lemma}
\newtheorem{prop}[thm]{Proposition}
\newtheorem{cor}[thm]{Corollary}
\theoremstyle{definition}
\newtheorem{definition}[thm]{Definition}
\numberwithin{equation}{section}
\newif\ifdraft\drafttrue
\begin{document}

\begin{abstract}
Dense forests are discrete subsets of Euclidean space which are
uniformly close to all sufficiently long line segments. The degree of
density of a dense forest is measured by its visibility function. 
We show that cut-and-project quasicrystals are never dense forests,
but their finite unions could be uniformly discrete dense forests. On the
other hand, we show that finite unions of lattices typically are dense forests,
and give a bound on their visibility function, which is close to
optimal. We also construct an explicit finite union of lattices which
is a uniformly discrete dense forest with an explicit bound on its
visibility. 
\end{abstract}

\maketitle


\vspace{4mm}

\begin{flushright}
\emph{\`A la m\'emoire d'\'Evariste Adiceam (1949--2018).}
\end{flushright}

\vspace{4mm}

\section{Introduction}\label{sec:introduction}
A set $Y \subset \R^n$ is called {\em uniformly discrete} if there is
a uniform lower bound on the distance between two distinct points of
$Y$, and {\em of finite density} if 
$$
\limsup_{T \to \infty} \frac{\# (Y \cap B(\bm{0},T))}{T^n} < \infty,
$$
where $B(\bm{x},r)$ denotes the ball of radius $r>0$ around $\bm{x}\in \R^n$,
and the distance
of points in $\R^n$ is measured using some norm (the precise choice of norm
will be immaterial in the results that follow and we will switch
between norms as convenient). Note that a uniformly discrete set is of
finite density but the converse need not hold. We say that
$Y$ is a \emph{dense forest} 
if there exists a 
function $\vre \mapsto v(\varepsilon)$ such that
for every $\varepsilon>0$, every line segment of length
$v(\varepsilon)$ comes $\varepsilon$-close to $Y$. A function
$v :  \R_+ \to \R_+$ for which this condition is satisfied is then
referred to as a \emph{visibility function} of $Y$. By considering a
disjoint collection of cylinders of round base $\vre$ and height $v(\vre)$,
one finds that for a dense forest of finite density, there is a constant $c>0$
such that for all $\vre>0$, 
\begin{equation}\label{eq: trivial bound}
v(\vre) \geq c 
\vre 
^{-(n-1)}.
\end{equation}

A well-known open question of Danzer is whether
there is $Y \subset \R^2$ which is of finite density and for which the
lower bound \equ{eq: trivial bound} is sharp up to the choice of $c$;
i.e.~whether there is a dense forest in the plane of finite density with $v(\vre)
= O\left(\frac{1}{\vre} \right)$.
Interest in Danzer's question has led to some interest in dense
forests of finite density, and uniformly discrete ones, with
visibility functions which are close to the bound given by \equ{eq:
  trivial bound}. We mention four papers which are important
for our discussion. A paper of Bishop \cite{Bishop} gave a
construction, attributed to Peres, of a dense forest of finite density
in $\R^2$. The construction will be reviewed below. The set in
question is a union of three explicit translated lattices, and the bound $v(\vre)
= O \left( 
\vre
^{-4 }\right)$ was
given for the visibility function. The set considered in \cite{Bishop}
is not uniformly discrete, and the first proof of the existence of a 
uniformly discrete dense forest was given in \cite{SW},
with an explicit set in $\R^n$ for any $n\geq 2$, but without an
effective bound on the visibility 
function. In \cite{Adiceam}, for every $n \geq 2$ and every $\eta>0$,
a probabilistic construction was given 
which gives rise to sets of bounded density in $\R^n$ satisfying a visibility
bound $v(\vre) = O\left(
\vre
^{-(2n-2+\eta)} \right).$ Note
that
in the case $n=2$, this improves on \cite{Bishop}, but is
not yet close to \equ{eq: trivial bound}. In \cite{Alon}, Alon gave a
probabilistic argument, which showed the existence of a uniformly discrete dense
forest in
$\R^2$ with a visibility bound $v(\vre) =
O\left(
\vre
^{-(1+o(1))}\right)$. The construction of Alon could
be adapted to higher dimensions as well, and yields sets which come
very close to the lower bound \equ{eq: trivial bound}. However the sets of
\cite{Alon} were not given explicitly. 

Thus it is natural to search for sets $Y \subset \R^n$ with the following
properties:
\begin{itemize}
\item
They are explicitly described.
\item
They are uniformly discrete. 
\item
They are dense forests and their visibility bound comes close to \equ{eq: trivial bound}.
\end{itemize}

The explicit sets we will consider involve periodic and almost
periodic sets, and their finite unions. Note that a lattice is clearly
uniformly discrete but is not a dense forest, and the same holds for a
periodic set (a finite union of translates of one lattice). In
fact a periodic set misses a neighborhood of some affine subspace
of codimension one. On the other hand, as Peres showed, a union of
finitely many periodic sets could be a dense forest. Such a
finite union is 
clearly of finite density, and it is sometimes
uniformly discrete (see Proposition \ref{prop: uniformly discrete unions}). Another
source of explicit constructions are {\em cut-and-project sets} or {\em
  model sets}, which are intensively studied in the literature on
aperiodic structures, see e.g. \cite{BG}. We review their definition
in \S  \ref{subsec: cut and project}. It is well-known that cut-and-project sets are
uniformly discrete. However our first result shows that they cannot be dense forests.

\begin{thm}\label{thm:CP_are_not_DF} Let $Y \subset \R^n$ be a
  cut-and-project set. Then $Y$ is not a dense forest; in fact 
 there exists $\varepsilon>0$ and a $(n-1)$-dimensional affine
 subspace $Z$ of $\R^n$ such that $Y$ contains no points in the
 $\vre$-neighborhood of $Z$. 
\end{thm}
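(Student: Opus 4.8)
The plan is to produce a codimension‑one affine subspace $Z\subset\R^n$ and an $\eps>0$ whose $\eps$‑neighbourhood is disjoint from $Y$; since (for $n\ge 2$, which is the only interesting case) such a $Z$ contains line segments of arbitrarily large length, all at distance at least $\eps$ from $Y$, this immediately shows that no visibility function can exist. Recall that a cut‑and‑project set has the form $Y=\pi\big(\widetilde\Lambda\cap(\pi^*)^{-1}(K)\big)$, where $\widetilde\Lambda\subset\R^n\times\R^m$ is a lattice, the projection $\pi\colon\R^n\times\R^m\to\R^n$ to the physical space is injective on $\widetilde\Lambda$, the projection $\pi^*\colon\R^n\times\R^m\to\R^m$ to the internal space satisfies $\overline{\pi^*(\widetilde\Lambda)}=\R^m$, and the window $K\subset\R^m$ is bounded. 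A convenient reformulation of the goal: it is enough to find $\xi\in\R^n\setminus\{0\}$, a constant $c\in\R$, and $\delta>0$ with $\langle\xi,y\rangle\notin(c-\delta,c+\delta)$ for all $y\in Y$, and then take $Z=\{x:\langle\xi,x\rangle=c\}$ and $\eps$ correspondingly small.

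The mechanism is to read $\xi$ off a vector of the dual lattice $\widetilde\Lambda^{\,*}$. For any $v^*=(\xi,\zeta)\in\widetilde\Lambda^{\,*}$ and any $\lambda\in\widetilde\Lambda$ one has $\langle\xi,\pi(\lambda)\rangle+\langle\zeta,\pi^*(\lambda)\rangle=\langle v^*,\lambda\rangle\in\Z$, so if $y=\pi(\lambda)\in Y$, i.e.\ $\pi^*(\lambda)\in K$, then $\langle\xi,y\rangle\in\Z-\langle\zeta,K\rangle$. If $\zeta$ is small enough that the bounded set $\langle\zeta,K\rangle\subset\R$ has diameter strictly less than $1$, then $\Z-\langle\zeta,K\rangle$ is a $1$‑periodic union of sets of diameter $<1$, whence its complement contains an open interval of positive length; choosing $c$ in the interior of such a gap and $\delta=\frac12\dist\big(c,\Z-\langle\zeta,K\rangle\big)>0$ finishes the argument, provided only that $\xi\neq 0$. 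Thus everything reduces to the heart of the proof: the claim that $\widetilde\Lambda^{\,*}$ contains vectors $v^*=(\xi,\zeta)$ with $\xi\neq 0$ and $|\zeta|$ arbitrarily small.

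I would prove this claim by lattice duality, which is exactly where the two defining properties of the cut‑and‑project scheme enter. Recall that for a lattice $\Gamma\subset\R^d$ and a subspace $U\subset\R^d$, the image of $\Gamma$ in $\R^d/U$ is dense if and only if $\Gamma^{\,*}\cap U^{\perp}=\{0\}$, since the characters of $\R^d/U$ annihilating this image are precisely those given by the vectors of $\Gamma^{\,*}$ lying in $U^{\perp}$. Apply this with $U=\R^n\times\{0\}=\ker\pi^*$, so $U^{\perp}=\{0\}\times\R^m$: density of $\pi^*(\widetilde\Lambda)$ forces $\widetilde\Lambda^{\,*}\cap(\{0\}\times\R^m)=\{0\}$, i.e.\ every nonzero $v^*\in\widetilde\Lambda^{\,*}$ has $\xi\neq 0$. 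Apply the same fact with $\Gamma=\widetilde\Lambda^{\,*}$ (so $\Gamma^{\,*}=\widetilde\Lambda$) and the same $U$: injectivity of $\pi$ on $\widetilde\Lambda$ says $\widetilde\Lambda\cap(\{0\}\times\R^m)=\{0\}$, hence $\pi^*(\widetilde\Lambda^{\,*})$ is dense in $\R^m$, so $\widetilde\Lambda^{\,*}$ has nonzero vectors with arbitrarily small internal component. Combining the two, any such vector automatically has nonzero physical component, which is the claim. The delicate point is precisely this simultaneous extraction of "$\xi\neq 0$" and "$\zeta$ small" from the two hypotheses; once it is in hand the remaining estimates are routine, and the same strategy applies when the internal space is a general locally compact abelian group, with Pontryagin duality replacing the euclidean duality used here.
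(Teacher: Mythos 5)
Your route is genuinely different from the paper's --- the paper reinterprets $Y$ as the visit set of a linear toral flow, reduces to the totally irrational case by passing to $\overline{V\cdot\bm{x}_0}$, treats the periodic case $n=N$ separately, and then produces a rational hyperplane via Minkowski's convex-body theorem applied to a thin cylinder around a line in $U^{\perp}$, whereas you work directly with the dual lattice $\widetilde{\Lambda}^*$ and extract the hyperplane from two applications of Pontryagin duality. This is elegant, and once complete it generalizes at once to locally compact abelian internal spaces, which the Minkowski step does not. But as written it proves strictly less than the theorem: you have built into your definition of cut-and-project set exactly the two regularity hypotheses --- $\pi|_{\widetilde{\Lambda}}$ injective and $\overline{\pi^*(\widetilde{\Lambda})}=\R^m$ --- that the paper explicitly declines to impose (end of \S 2.2: ``We emphasize that we do not require these assumptions and only assume that $W$ is bounded''). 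Both enter your duality argument essentially: density of $\pi^*(\widetilde{\Lambda})$ is precisely what forbids nonzero dual vectors of the form $(\bm{0},\zeta)$, and injectivity of $\pi|_{\widetilde{\Lambda}}$ is precisely what forces $\pi^*(\widetilde{\Lambda}^*)$ to be dense, hence to contain arbitrarily small nonzero $\zeta$. Drop either and the corresponding step, as written, no longer goes through: without density of $\pi^*(\widetilde{\Lambda})$ some nonzero dual vectors have $\xi=\bm{0}$, and without injectivity of $\pi|_{\widetilde{\Lambda}}$ the group $\pi^*(\widetilde{\Lambda}^*)$ need not have small nonzero elements.

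The gap is a real one, since the theorem is asserted for the broader class, but your framework can be pushed to cover it with the missing case analysis --- which is essentially the paper's reduction step. If $\pi(\widetilde{\Lambda})$ is not dense in $\R^n$, there is already a nonzero $(\xi,\bm{0})\in\widetilde{\Lambda}^*$ and you are done immediately. If $\pi(\widetilde{\Lambda})$ is dense, then $\widetilde{\Lambda}\cap(\{\bm{0}\}\times\R^m)$ cannot have full rank (else $\pi(\widetilde{\Lambda})$ would be a lattice), so $\overline{\pi^*(\widetilde{\Lambda}^*)}$ has a nontrivial connected component of the identity and arbitrarily small nonzero $\zeta\in\pi^*(\widetilde{\Lambda}^*)$ do exist; moreover any such $\zeta$ comes from a $v^*$ with $\xi\neq\bm{0}$, because the $\xi=\bm{0}$ vectors of $\widetilde{\Lambda}^*$ project under $\pi^*$ to a \emph{discrete} subgroup of $\R^m$. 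You should carry out this argument rather than assume it away; it corresponds to the paper's passage to $\overline{V\cdot\bm{x}_0}$ together with the separate treatment of the periodic case. One smaller omission: the paper allows $L$ to be a \emph{translated} lattice; the shift is harmless (absorb its internal part into $K$ and its physical part into a translation of $Y$), but the reduction should be stated.
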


Nevertheless cut-and-project sets can be used to construct interesting
examples of dense forests. Namely we have: 

\begin{thm}\label{thm: immediately implied}
	There exist uniformly discrete dense forests in $\R^2, \,
        $ which
        are a finite union of cut-and-project sets. 
\end{thm}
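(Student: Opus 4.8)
The plan is to deduce Theorem~\ref{thm: immediately implied} from two ingredients: first, the existence of a finite union of translated lattices in $\R^2$ that is a dense forest — for instance the construction of Peres recalled in \cite{Bishop}, or the sharper construction carried out elsewhere in this paper — and second, a transfer principle that converts any such union into a \emph{uniformly discrete} finite union of cut-and-project sets, while retaining a controlled (polynomial) visibility function as needed for the quantitative version of the result. The reason cut-and-project sets have to enter is that a finite union of genuine lattices that is uniformly discrete is necessarily commensurable, hence periodic, hence — as recalled in the introduction — misses a neighbourhood of an affine hyperplane and cannot be a dense forest. Cut-and-project sets are aperiodic and so escape this dichotomy, yet they remain rigid enough that finitely many of them, all drawn from one common super-lattice, can together be uniformly discrete.

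I would set up the transfer principle as follows. Start from a dense forest $Y_0 = L_1 \cup \dots \cup L_k \subset \R^2$ with each $L_i$ a translated lattice; the obstruction to uniform discreteness is that distinct $L_i$ are incommensurable, so points of $L_i$ and $L_j$ come arbitrarily close. The remedy is to find a single lattice $\tilde\Lambda \subset \R^2 \times \R^m$ with $\pi_1|_{\tilde\Lambda}$ injective and $\pi_2(\tilde\Lambda)$ dense (in the notation of \S\ref{subsec: cut and project}), together with windows $W_1, \dots, W_k \subset \R^m$ and physical translation vectors $t_1, \dots, t_k \in \R^2$, so that the cut-and-project set $Y_i \df t_i + \pi_1\bigl(\tilde\Lambda \cap (\R^2 \times W_i)\bigr)$ catches every line direction that $L_i$ was responsible for in $Y_0$. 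Uniform discreteness of $Y \df \bigcup_{i=1}^k Y_i$ is then the easy half: if $t_i + \pi_1(\lambda)$ and $t_j + \pi_1(\mu)$ are $\delta$-close, then $\nu \df \lambda - \mu \in \tilde\Lambda$ has internal part in the bounded set $W_i - W_j$ and physical part within $\delta$ of the fixed vector $t_j - t_i$, so $\nu$ ranges over a bounded region of $\R^{2+m}$ containing only finitely many lattice points; a generic choice of $(t_1, \dots, t_k)$ rules out all these coincidences. This is exactly the mechanism of Proposition~\ref{prop: uniformly discrete unions}.

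The hard half, and the main obstacle, is to verify that $Y_i$ really does catch the directions assigned to $L_i$, with a quantitative bound. Given $\vre>0$ and a segment $S$ of length $\ell$ in such a direction $v$, one translates the problem into the linear flow in direction $v$ on the torus $\R^{2+m}/\tilde\Lambda$: a long segment in direction $v$ corresponds to a long orbit arc, and one needs that arc to enter the positive-measure ``cylinder region'' of the torus coming from $B_\vre \times W_i$, which forces a point of $Y_i$ to lie within $\vre$ of $S$. Running an effective equidistribution / discrepancy estimate for this flow produces a polynomial visibility bound. The subtlety that has to be overcome is precisely the phenomenon isolated in Theorem~\ref{thm:CP_are_not_DF}: for a \emph{single} cut-and-project set there are exceptional lines along which the internal coordinate never meets the window, so $Y_i$ on its own is not a dense forest. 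One must therefore arrange the windows $W_i$ together with the translates $t_i$ so that the exceptional lines of the individual $Y_i$ are pairwise disjoint — i.e. no line is exceptional for all indices at once — and then check that this genuinely removes the obstruction, uniformly over all directions $v$, including the Diophantine-worst ones. Making this uniformity effective, rather than merely qualitative as in the argument of \cite{SW}, is where essentially all the difficulty lies.
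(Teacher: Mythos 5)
Your high-level scheme — finitely many cut-and-project sets drawn from one common super-lattice, windows arranged so that no line direction is ``blind'' for all of them simultaneously, and translation vectors chosen generically to give uniform discreteness — is indeed the skeleton of the paper's proof. The paper formalizes exactly the window condition you gesture at as a \emph{piecewise linear unavoidable section} in $\T^3$: a disjoint union of projected line segments that intersects every codimension-one subtorus, so that the exceptional affine hyperplane appearing in \cref{thm:CP_are_not_DF} cannot simultaneously avoid all pieces. However, as written your proposal has a genuine gap at precisely the point you flag as ``where essentially all the difficulty lies.'' You do not show that one \emph{can} arrange the windows $W_i$ (and the common lattice $\tilde\Lambda$) so that the exceptional directions are disjoint; you only remark that one ``must'' do so. The paper settles this existence question concretely (\cref{thm: good sections exist}): working with $m=1$ internal dimension, it exhibits three closed line segments in $\R^3$ in linearly independent directions whose projections are disjoint, and observes — via the volume/covolume duality \equ{eq: reciprocal} together with \cref{prop: size of q} — that all but finitely many codimension-one subtori are ``small enough'' to be forced to meet one of the segments, after which finitely many more segments kill the remaining exceptions. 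Without some argument of this kind, the central existence step is unproved.

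Two further issues are worth flagging. First, the ``transfer principle'' framing is a red herring: you open by saying you will deduce the theorem from the existence of a lattice dense forest (Peres') plus a conversion step, but the cut-and-project sets $Y_i$ you define bear no relation to the lattices $L_i$ you start with — in the end one must construct $\tilde\Lambda$, the $W_i$, and the $t_i$ directly, and the starting lattice forest plays no role. Second, you commit yourself to an effective visibility bound via ``effective equidistribution / discrepancy estimate,'' but nothing in the statement of \cref{thm: immediately implied} requires this, and the paper expressly does \emph{not} obtain one for this construction: the paper's proof of \cref{thm:UDDF_as_a_union_of_CP} is a soft argument (weak-$*$ limits of empirical measures on $\T^3$, ergodic decomposition, and classification of $H$-invariant ergodic measures as Haar measures on rational subtori), exactly parallel to the qualitative argument of \cite{SW} that you propose to improve upon. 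Insisting on an effective bound here adds substantial unresolved difficulty without advancing the claim, while the qualitative compactness argument, which you do not develop, is what actually closes the proof.
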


The uniformly discrete dense forest in
Theorem \ref{thm: immediately implied} is given 
explicitly. However our proof does not provide bounds on its
visibility function. We are able to construct other sets which are
finite unions of translated lattices, for which we have good
visibility bounds. Namely we have:

\begin{thm}\label{thm: finitely many lattices}
There is a union of three translated lattices in $\R^2$ which is a
uniformly discrete dense forest with visibility bound $v(\vre) =
O\left(\vre^{-(5+\eta)}\right)$ for any $\eta>0$. 
\end{thm}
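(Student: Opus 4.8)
The plan is to build on the dense forest of Peres recalled above (see also \cite{Bishop}), modifying it so as to gain uniform discreteness at the cost of a weaker visibility exponent. Recall that the Peres forest is a union $F = L_1 \cup L_2 \cup L_3$ of three translated lattices $L_i = \mathbf{v}_i + \Lambda_i$ in $\R^2$, that it is of finite density with visibility $v_F(\vre) = O\left(\vre^{-4}\right)$, but that it is \emph{not} uniformly discrete. The relevant dichotomy --- which is what underlies Proposition~\ref{prop: uniformly discrete unions} --- is the following: a finite union $\bigcup_i(\mathbf{w}_i + \Gamma_i)$ of translated lattices is uniformly discrete only if, for every pair $i\neq j$, the subgroup $\Gamma_i + \Gamma_j$ is non-dense in $\R^2$ (otherwise points of the two cosets come arbitrarily close whatever the $\mathbf{w}_i$ are), and conversely this necessary condition, together with a suitable choice of translations making the translated lattices pairwise disjoint, is also sufficient. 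One cannot simply force $\Gamma_i+\Gamma_j$ to be discrete for all pairs, since then all the $\Gamma_i$ would be pairwise commensurable, $Y$ would be periodic, and hence not a dense forest. Thus one must work with lattices whose pairwise sums are dense in a line but no more; the Peres forest, not being uniformly discrete, does not meet this requirement.

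Accordingly, the first step is to produce a new triple of lattices $\Gamma_1,\Gamma_2,\Gamma_3 \subset \R^2$ together with translation vectors $\mathbf{w}_1,\mathbf{w}_2,\mathbf{w}_3 \in \Q^2$ such that: (a) for each pair $i\neq j$, the lattices $\Gamma_i$ and $\Gamma_j$ share a common rational direction $V_{ij}$ and have commensurable quotients modulo $V_{ij}$, so that $\overline{\Gamma_i + \Gamma_j} = V_{ij} + D_{ij}$ with $D_{ij}$ discrete; (b) the $\mathbf{w}_i$ place the three translated lattices in pairwise disjoint position; and (c) the union
$$Y \df (\mathbf{w}_1 + \Gamma_1) \cup (\mathbf{w}_2 + \Gamma_2) \cup (\mathbf{w}_3 + \Gamma_3)$$
is still a dense forest. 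Given (a) and (b), uniform discreteness of $Y$ is immediate from Proposition~\ref{prop: uniformly discrete unions}. Requirement (c) is where the Peres mechanism has to be transplanted into this more rigid configuration: directions close to each exceptional direction $V_{ij}$ --- which are exactly the directions that $\Gamma_i$ and $\Gamma_j$ together fail to resolve, by (a) --- must be caught by the third lattice $\Gamma_k$, for which those directions are generic; and all remaining directions must be caught by combining the three lattices, using a self-similarity of the configuration modeled on the renormalization underlying the Peres forest.

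Granting such a configuration, the visibility bound is obtained as for the Peres forest. Fix $\vre>0$, a direction $\theta$, and a segment $S$ of length $L$ in direction $\theta$. One first determines which of the finitely many direction-regimes $\theta$ belongs to, then applies the renormalization $O\!\left(\log(1/\vre)\right)$ times: each application rescales both the length $L$ and the transverse resolution $\vre$ by controlled factors and rotates $\theta$ towards a favorable direction, and after enough steps the problem reduces to catching a bounded-length segment at bounded resolution, which is trivial. Tracking how $(L,\vre)$ transform --- this combines the expansion and contraction rates of the renormalization with the Diophantine estimates governing how quickly a line in each direction-regime approaches the relevant lattice --- yields $L = O\left(\vre^{-(5+\eta)}\right)$. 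The loss of one unit in the exponent relative to Peres's $\vre^{-4}$, and the extra $\eta$, reflect the constraint (a): the three lattices can no longer be taken in general position, so the exceptional direction-regimes are resolved less efficiently, and a Diophantine parameter in the construction must be pushed to its extreme value to obtain the bound for every $\eta>0$.

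The main obstacle is reconciling (a) with (c): one must exhibit three lattices whose pairwise sums are non-dense in $\R^2$ --- a strong alignment constraint --- which nonetheless jointly see every direction of $\R^2$ with enough density to form a dense forest, and whose configuration carries a renormalization symmetry usable for the quantitative estimate. Once such a triple, together with compatible rational translation vectors, is written down explicitly, uniform discreteness is a formal consequence of Proposition~\ref{prop: uniformly discrete unions}, and the visibility bound $v(\vre) = O\left(\vre^{-(5+\eta)}\right)$ is a matter of running the renormalization carefully and bookkeeping the Diophantine exponents.
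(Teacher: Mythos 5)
Your structural set-up is sound: you correctly identify that uniform discreteness should come from Proposition~\ref{prop: uniformly discrete unions}, that each pairwise sum $\Gamma_i+\Gamma_j$ must therefore be non-dense, and that one cannot make all such sums discrete on pain of periodicity. This matches the organization of the paper's argument, which exhibits explicit $\Lambda_1 = \Z^2$, $\Lambda_2$ and $\Lambda_3$ and numbers $\alpha,\beta,\gamma,\delta$ (namely $\sqrt2, 3-\sqrt2+\sqrt3-\sqrt6, \sqrt3, -3+\sqrt6$) chosen so that $\Lambda_1+\Lambda_2 \subset \R\times\Z$, $\Lambda_1+\Lambda_3 \subset \Z\times\R$, and $\Lambda_2+\Lambda_3$ projects commensurably onto a line.

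However, there is a genuine gap in the visibility part of your argument. You assert that the visibility bound follows from ``applying a renormalization $O(\log(1/\vre))$ times'' that rescales $(L,\vre)$ and rotates the direction, attributing this mechanism to Peres's original argument. Neither Peres's argument nor the paper's proof uses any such renormalization. The actual mechanism is a single, direct Diophantine estimate: for a nearly horizontal segment of length $M$, one shows that if both orbit sequences $(m\sigma \bmod 1)_{m\le M}$ and $(m(\sigma-\beta)/\delta \bmod 1)_{m\le M}$ fail to be $\vre$-dense, then by Lemma~\ref{leminclusion} there exist small denominators $q_1,q_2 = O(\vre^{-1})$ and integers $p_1,p_2$ giving $|q_1 q_2\beta + q_1 p_2 \delta - p_1 q_2| < 2\delta/(\vre M)$; setting $P=q_1q_2$, $Q=q_1p_2$ (both $O(\vre^{-2})$) and invoking Schmidt's theorem on linear forms in two algebraic numbers (condition~(iii), with exponent $2+\eta$) immediately forces $M \ll \vre^{-(5+\eta)}$. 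There is no iteration, no rescaling and no rotation of directions; the $+\eta$ comes from the Schmidt exponent and the factor $\vre^{-5}$ arises as $\vre^{-2\cdot 2}\cdot\vre^{-1}$, not from a ``less efficient resolution of exceptional direction-regimes.'' (Incidentally, your accounting of the exponent loss is off: the paper shows Peres's forest itself has visibility $O(\vre^{-3})$, so the price of uniform discreteness here is two units, not one.) Without specifying the renormalization map and actually running the estimate you describe, the visibility claim is unsupported, and I see no indication that such a renormalization exists or would yield the stated bound. You also never produce the explicit triple $(\Gamma_1,\Gamma_2,\Gamma_3)$ with translations, which is where the real work lies; the proposal ends exactly at the point where the proof should begin.
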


The three translated lattices are given completely explicitly, see
\S \ref{subsec:
  three lattice construction}. 
Removing the condition of uniform discreteness and using more
translated lattices, we are able to get much
better visibility bounds: 

\begin{thm}\label{thm: set with bounds}
For each $n \geq 2$, each $s \geq n$ and each $\eta>0$, for a.e. choice of
$n s $ 
 lattices in $\R^n$, their union is a dense forest with visibility
function satisfying 
$$
v(\vre) = O
\left( 
\vre
^{-(n-1+\alpha_n(s)+\eta)}  \right),
$$
where 
$$\alpha_n(s)  = \frac{n(n-1)^2}{s-(n-1)} \underset{s\to \infty}{\longrightarrow} 0.$$
\end{thm}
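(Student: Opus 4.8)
The plan is to reduce the statement to a measure-theoretic covering problem on the parameter space $X_n = \mathrm{SL}_n(\R)/\mathrm{SL}_n(\Z)$ (or an appropriate product $X_n^{ns}$), and then to apply a Borel--Cantelli type argument together with a quantitative equidistribution/counting estimate. For a fixed $\eps>0$ and a unit vector $u$, let us call a line segment of length $L$ in direction $u$ \emph{bad} (for a given tuple $\Lambda = (\Lambda_1,\dots,\Lambda_{ns})$ of lattices) if it comes no closer than $\eps$ to $Y = \bigcup_i \Lambda_i$. We want: for $L = C\eps^{-(n-1+\alpha_n(s)+\eta)}$, almost every $\Lambda$ admits no bad segment, for all sufficiently small $\eps$. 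Passing to a countable sequence $\eps = \eps_k \to 0$ (say $\eps_k = 2^{-k}$) and using a standard doubling trick to interpolate between scales, it suffices to show $\sum_k \mu\{\Lambda : \exists \text{ a bad segment at scale } \eps_k\} < \infty$, where $\mu$ is Haar probability measure on $X_n^{ns}$.

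The core estimate is therefore to bound, for a single scale $\eps$ and a single lattice $\Lambda$ varying over $X_n$, the probability that a \emph{fixed} tube $T$ (a cylinder of radius $\eps$ around a segment of length $L$, of volume $\asymp \eps^{n-1} L$) is missed by $\Lambda$. The relevant input is an estimate of the form $\mu\{\Lambda \in X_n : \Lambda \cap T = \emptyset\} \ll \big(\mathrm{Vol}(T)\big)^{-\beta}$ for a suitable exponent $\beta = \beta(n) \in (0,1]$; such bounds follow from higher-moment estimates à la Rogers (or Schmidt), controlling the number of lattice points in a set by its volume with small fluctuations, which one can extract from the literature on Siegel-type formulas on $X_n$. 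Since the $ns$ lattices $\Lambda_1,\dots,\Lambda_{ns}$ are chosen independently, the probability that $T$ is missed by \emph{all} of them is at most $\big(\mathrm{Vol}(T)\big)^{-\beta \cdot (\text{something growing in } s)}$ — here the grouping of the $ns$ lattices into $n$ groups of $s$, or some similar bookkeeping, is what produces the precise shape of $\alpha_n(s) = \tfrac{n(n-1)^2}{s-(n-1)}$, so this combinatorial allocation of independence across directions is a point that needs care.

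Next one must discretize the set of tubes. There are $\asymp \eps^{-(n-1)}$ essentially distinct directions (an $\eps$-net on the sphere, after rescaling appropriately) and, for each direction, $\asymp (R/\eps)^{n}$ translates needed to cover segments inside a large ball $B(\bm 0, R)$ — but by periodicity of each $\Lambda_i$ one only needs to consider translates in a fundamental domain, cutting the count down to $\asymp \eps^{-n}$ per direction, hence $N(\eps) \asymp \eps^{-(2n-1)}$ tubes in total (up to adjusting radii by a constant so that covering a bad segment of length $L$ forces one of the net-tubes of length $L/2$, say, to be genuinely missed). Taking a union bound, $\mu\{\exists \text{ bad tube}\} \ll N(\eps) \cdot (\eps^{n-1} L)^{-\gamma(n,s)}$ where $\gamma(n,s)$ is the total independence-amplified exponent. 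Demanding that this be summable over $\eps = 2^{-k}$ — i.e. that it be $\ll \eps^{\delta}$ for some $\delta>0$ — forces $L \geq \eps^{-(n-1)} \cdot \eps^{-(2n-1+\delta)/\gamma(n,s)}$, and optimizing the constants so that $(2n-1+\delta)/\gamma(n,s) = \alpha_n(s) + \eta$ is exactly where the stated exponent comes from; one checks the arithmetic gives $\alpha_n(s) = n(n-1)^2/(s-(n-1))$, which $\to 0$ as $s \to \infty$, consistent with the fact that using more lattices buys more independence.

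The main obstacle, I expect, is establishing the single-lattice missing-probability bound with a \emph{good} exponent $\beta(n)$ and, relatedly, getting the dependence on $s$ sharp: a crude second-moment (Rogers) bound gives $\beta$ bounded away from $1$ and a correspondingly weak $\alpha_n(s)$, whereas the stated $\alpha_n(s)$ presumably requires using higher moments or a more clever partition of the $ns$ lattices so that the effective volume seen by the union grows like a genuine power of $s$. Managing the interplay between the number of tubes $N(\eps)$, which is rigidly $\asymp \eps^{-(2n-1)}$, and the per-tube decay, so that the Borel--Cantelli sum converges with the claimed exponent and no loss beyond the arbitrarily small $\eta$, is the delicate bookkeeping at the heart of the argument; the reduction steps (discretization, doubling across scales, periodicity to bound the number of translates) are routine by comparison.
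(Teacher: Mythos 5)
Your proposal rests on a different measure than the theorem actually uses, and that misreading leads to a step that does not work. The paper states immediately after Theorem \ref{thm: set with bounds} that ``the measure implicit in this a.e.\ statement is defined by choosing at random an $s$-tuple $\bm{\Theta}$ of vectors in $\R^{n-1}$'' and then forming the explicit union $\mathfrak{F}(\bm{\Theta}_{s,d})$ of unipotent shears of $\Z^n$ and their coordinate permutations. In particular the $ns$ lattices are not drawn independently from Haar measure on $X_n=\mathrm{SL}_n(\R)/\mathrm{SL}_n(\Z)$; they are a highly structured $(n-1)s$-parameter family, all containing the origin. Your Rogers/Siegel-moment estimate for the probability that a Haar-random unimodular lattice misses a tube is therefore aimed at the wrong probability space.

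More seriously, the discretization step has a genuine gap: you assert that ``by periodicity of each $\Lambda_i$ one only needs to consider translates in a fundamental domain.'' But the \emph{union} $Y=\bigcup_i\Lambda_i$ is not periodic — translating a bad tube by $v\in\Lambda_1$ preserves being missed by $\Lambda_1$ but not by $\Lambda_2,\dots,\Lambda_{ns}$. Since the dense-forest condition quantifies over all tubes in $\R^n$, you are left with an unbounded (indeed infinite-measure) family of bad events and the union bound does not close. The paper sidesteps this entirely: Proposition \ref{prop: have shown} reformulates ``every length-$M$ segment comes $\vre$-close to $\mathfrak{F}(\bm{\Theta}_{s,d})$'' as the statement that for all $\bm{\xi},\bm{\zeta}$ (equivalently, for all points of the \emph{compact} torus $\T^{n-1}\times\T^{n-1}$) one of the orbits $\bigl(m(\bm{\xi}-\bm{\theta}_i)\bigr)_{0\le m\le M}$ is $\vre$-dense in $\T^{n-1}$. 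The Borel--Cantelli argument is then carried out in the parameter space $\R^{(n-1)s}$ via the uniformly Diophantine condition (Definition \ref{defudt}), a multilinear-algebra reformulation (Proposition \ref{propmultilinalg}), and the measure estimate of Lemma \ref{lememasure}, which tracks the rank $r$ of the integer matrix $\bm{U}$ and is where the exponent $\alpha_n(s)=n(n-1)^2/(s-(n-1))$ actually comes from. Without the reduction to a compact parameter space and the specific structure of the shears, neither the discretization nor the exponent bookkeeping in your sketch can be carried through.
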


The measure implicit in this a.e.~statement is defined by choosing at
random an $s$-tuple $\bm{\Theta}$ of vectors in $\R^{n-1}$ and applying an
explicit construction described in \S \ref{subsec:
  Peres construction}. 
Moreover this a.e.~set
is described by the explicit 
condition of being {\em uniformly Diophantine}, which we
introduce in this paper (see \S \ref{sec: diophantine implies 
  visibility}).  

In the special case $n=2$, our construction is a generalization of the
construction of Peres mentioned above. Recall that Peres used a union
of three explicit translated lattices to obtain a dense forest in $\R^2$, see Figure
\ref{fig: Peres construction}. His
argument used a Diophantine inequality to give a visibility bound of
$O \left( 
\vre
^{-4}\right)$ for this set; our
analysis, applied to the same set, yields a better bound of $O \left(
\vre
^{-3}\right)$ and shows how, by choosing more
lattices whose generators  satisfy a different Diophantine condition,
one can improve further on this bound. \\

%
%
%
%


\begin{center}
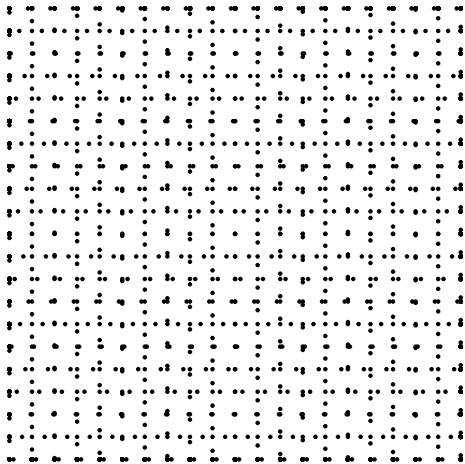


\begin{minipage}{0.4\textwidth}

\begin{tikzpicture}[scale=0.3]
\foreach \x in {-10,...,10}{
\foreach \y in {-10,...,10}{
{\fill (\x,\y) circle (0.1) ;}}}

\foreach \x in {-10,...,10}{
\foreach \y in {-30,...,30}{
{\fill (\x,{max(-10,{min(1.618*\x+\y,10)})}) circle (0.1) ;}}}

\foreach \x in {-10,...,10}{
\foreach \y in {-30,...,30}{
{\fill (-{max(-10,{min(1.618*\x+\y,10)})}, \x) circle (0.1) ;}}}
\end{tikzpicture}
\end{minipage}
\begin{minipage}{0.7\textwidth}
\captionsetup[figure]
{labelformat=simple, margin={-20pt,20pt}}%
\captionof{figure}{Peres' construction: a union of three translated lattices in the plane which is a
  dense forest.\\}
\label{fig: Peres construction}
\end{minipage}

\end{center}


Note that all the examples considered in this paper are known not to be
Danzer sets, i.e.~they cannot realize the bound \equ{eq: trivial
  bound}, see \cite{BW, SW}.

\ignore{
\begin{figure}[ht!]
\begin{center}
\includegraphics{.pdf}
\caption{}
\label{figure:}
\end{center}
\end{figure} 
}

\subsection*{Organization of the paper} 
After some generalities on cut-and-project constructions and tori, we prove
Theorems \ref{thm:CP_are_not_DF} and \ref{thm: immediately implied}
in \S \ref{sec:CP_are_not_DF} and \S \ref{sec:unions_of_CP_are_DF}
respectively. The proofs rely on viewing cut-and-project sets as
return times to a section in certain higher dimensional toral
flows. In \S \ref{sec: finite unions of lattices} we introduce 
the condition of being uniformly Diophantine, and state
Proposition \ref{propreduc}, which asserts that this condition
ensures a certain uniform rate of equidistribution for
translations on tori. We show that such uniform equidistribution
implies visibility bounds for certain finite unions of translated
lattices, and thus reduce Theorems \ref{thm: finitely many lattices} and 
\ref{thm: set with bounds} to
the verification of the existence of uniformly Diophantine matrices. Proposition \ref{propreduc} is proved in 
\S \ref{sec: toral equidistribution}, and is used to derive Theorem
\ref{thm: finitely many lattices}. In \S\ref{sec: diophantine 
  notions} we develop a `metric theory' related to the property of being uniformly Diophantine,
from which we deduce the existence of the required matrices.
We conclude the paper with some
open problems. 

\subsection*{Acknowledgments} 
The authors are grateful to the anonymous referee for helpful
suggestions. They also thank the referee for producing Figure
\ref{figure:forest} and agreeing to reproduce it in this paper. The authors gratefully acknowledge the support of grants
EP/T021225,  BSF 2016256 and ISF 
2095/15. The first named author wishes to thank Federico Ardila for a
talk given at the University of Waterloo in 2018 which turned out to
be most illuminating to solve some of the questions raised in this
paper.  


\ignore{
\section{Background}\label{sec:background}
 \comfaustin{More pieces of notation to be introduced here after all
   sections are written up.}\\
Throughout the paper we use the following notation: $x_1,\ldots,x_n$
denote the coordinates of a vector $x\in\R^n$, $\norm{x} \df
\sqrt{x_1^2+\ldots,x_n^2}$ the Euclidean norm of $x$, and $B(x,r) =
\{y\in\R^n:\norm{x-y}<r\}$. The standard inner product in $\R^n$ is
denoted by $\inpro{x}{y}$, and for a linear subspace $V$ we denote by
$V^\perp$ its orthogonal complement with respect to that inner
product. For $A,B\subseteq\R^n$, let $\dist(A,B)=\inf\{\norm{a-b}:a\in
A, b\in B\}$. In the case that $A,B\subseteq\T^n$ we use similar
notation: $\dist_{\T^n}(A,B)=\inf\{d_{\T^n}(a,b):a\in A, b\in B\}$,
where $d_{\T^n}(\cdot,\cdot)$ is the standard metric on $\R^n/\Z^n$
given by the minimal distance between cosets representatives. $\Vol(A)$
stands for the Lebesgue measure of a measurable set $A$. The later
notation will also be used for fundamental domains of lattices in
$k$-dimensional subspces of $\R^n$, in which case $\Vol(\cdot)$ refers
to the $k$-dimensional Lebesgue measure on that subspace.    
}

\section{Preliminaries}
In this section we set our notation and collect some results we will
use. 

\subsection{Lattices}\label{subsec: lattices}
A subset $\Lambda \subset \R^d$ is called a {\em lattice} if there are
$\bm{v}_1, 
\ldots, \bm{v}_d$ such that $\Lambda = \mathrm{span}_{\Z} (\bm{v}_i) = \Z
\bm{v}_1 \oplus \cdots \oplus \Z \bm{v}_d$ (note that in this paper
lattices are always of full rank).  
Fix a norm on $\R^d$ and denote by 
$$\lambda_1\left(\Lambda \right)\; \le \; \dots \; \le \;
\lambda_d\left(\Lambda \right)$$ 
the {\em successive minima} of $\Lambda$; that is, 
$\lambda_k(\Lambda)$ is the minimal $r>0$ 
for which $\Lambda$ contains $k$ linearly independent vectors
of norm at most $r$. The successive minima depend on the norm chosen,
and unless otherwise specified, we will use the Euclidean norm. Let
$\mu(\Lambda)$ denote the covering 
radius of $\Lambda$; that is,  
\[
\mu\left( \Lambda\right)\, 
= \; \sup_{ \bm{x}\in\R^d}\; \inf_{\bm{\lambda}\in\Lambda}\; \norm{\bm{x}-\bm{\lambda}}. \label{coveringradius} 
\]
%
 It then follows from Jarn\'ik's Transference Theorem
 (cf.~\cite[Theorem~23.4 p.381]{gruber}) that 
\begin{equation}\label{jarniktransf}
\frac{1}{2}\cdot \lambda_d\left(\Lambda \right) \; \le \; \mu\left(
  \Lambda\right)\; \le \; \frac{d}{2}\cdot \lambda_d\left(\Lambda
\right). 
\end{equation}
When using other norms, the constants $1/2, d/2$ should be replaced in
\eqref{jarniktransf} by other constants depending  on the norm and on
$d$.

A {\em translated lattice} or {\em grid} is a set of the form $\bm{x}
+ \Lambda$ where $\Lambda$ is a lattice, and $Y \subset \R^d$ is
called {\em periodic} if it is of the form $Y = \bigcup_{j=1}^s
\left(\bm{x}_j + \Lambda\right)$ for some lattice $\Lambda$ and $\bm{x}_j \in
\R^d$. 

\subsection{Cut--and--project quasicrystals}\label{subsec: cut and project}
We now define {\em cut-and-project sets}, which are an important
source of aperiodic but ordered discrete sets in mathematical
physics. For background and history we refer the reader to \cite{BG}. 
Let $n, k, N$ be integers with $n \geq 1, k\geq 1$ and $N = n+k$, and write
$\R^N=V_{phys} \oplus V_{int}$ for subspaces satisfying $\dim V_{phys}
=n$, $\dim V_{int} =k$. The spaces $V_{phys}$ and
$V_{int}$ are called the 
\emph{physical} and the \emph{internal spaces} respectively, and we
denote by $\pi_{phys}:\R^N \to V_{phys}$ and $\pi_{int}: \R^N \to V_{int}$
the projections associated with the direct sum decomposition. Let $L
\subset \R^N$ be a translated lattice, and let $W \subset V_{int}$ be
a bounded set. The set

\[ \Lambda(L, W) \df \pi_{phys} \left(L \cap   \pi_{int}^{-1}(W)\right) \]
is called a \emph{cut-and-project set}, $W$ and $L$ are its {\em
  window} and {\em lattice}, and $(n, N)$ are its {\em associated dimensions}.

In the literature there are two slightly different conventions
regarding cut-and-project sets. In the first, one fixes
$V_{phys}$ (resp.~$V_{int}$) to be the space parallel to the first $n$
(resp.~last $k$) coordinate axes and varies the translated lattice $L$, while in
the second, one fixes $L = \Z^N$ and varies the summands of the direct
sum decomposition $\R^N = V_{phys} \oplus V_{int}$. It will be
convenient for us to use both points of view. Also, in the
literature, various different hypotheses are imposed on the
window and on the lattice. For example it is often assumed that $W$ is
compact and equal to the closure of its interior, and that
$\pi_{phys}|_L$ is injective and $\pi_{int}(L)$ is dense. We emphasize
that we do not require these assumptions and only assume that $W$ is bounded.  


\subsection{Tori}
We will use boldface
letters to denote vectors in $\R^N$ and write 
$\bm{x} \cdot \bm{y}$ for the standard inner product of
$\bm{x}, \bm{y}$, $\|\bm{x}\| = \sqrt{\bm{x} \cdot
  \bm{x}}$  and $\bm{x}^{\perp}$ for $\{\bm{y} :
\bm{x} \cdot \bm{y} =0\}$. 
A {\em } torus is the quotient $V/ \Lambda$ for a finite dimensional vector
space $V$ and a lattice $\Lambda \subset
V$. The {\em standard torus} $\R^N/\Z^N$ will be denoted by $\T^N$, and $\pi:
\R^N \to \T^N$ will be the projection. For a subspace $V \subset
\R^N$, the restriction of the standard inner product to $V$ is an
inner product, we can use this inner 
product to induce a volume form on $V$, as well as on the quotient
torus $T = V/\Lambda$.  For a Borel subset $A \subset T$ we denote  by
$\Vol_T(A)$ (or $\Vol(A)$ if confusion is unwarranted) its measure
with respect to this volume form. 

A subspace $V \subset \R^N$ is
called {\em rational} if it is the set 
of solutions of a system of linear equations with rational
coefficients. This happens if and only if $V \cap \Z^N$ is a lattice
in $V$, or equivalently, $\pi(V)$ is a torus in $\T^N$. An {\em affine
  subtorus} of $\T^N$ is a translate $\bm{x} + \pi(V)$ for $V$ a
rational subspace of $\R^N$. Equivalently, it is a coset in the
quotient $\T^N / \pi(V)$. We will always
equip $\R^N$ with the standard inner product and use its restriction
to a rational subspace $V$ to equip $\pi(V)$ with a volume.

Let $V \subset \R^N$ be a rational subspace and $T  = \pi(V) \subset
\T^N$ be the corresponding subtorus, and denote by $\Vol_T$ the
corresponding measure on $T \cong V/( V \cap
\Z^N)$. The quotient space $\T^N/T$ is naturally identified with
$V^{\perp}/\pi^{\perp}(\Z^N)$ where $\pi^{\perp} : \R^N \to V^{\perp}$
is orthogonal projection, and we let 
$\Vol_{\T^N/T}$ be the volume on
$\T^N/T$ obtained by using the standard inner product on 
$V^{\perp}$. With these conventions we
have 
\begin{equation}\label{eq: reciprocal}
\Vol_T(T) \cdot \Vol_{\T^N/T}\left(\T^N/T \right)  = \Vol \left(\T^N \right)=1.  
\end{equation}

For a lattice $\Lambda \subset \R^N$, we define its {\em covolume} to
be $\Vol(\R^N/\Lambda)$, and denote this quantity by
  $\covol(\Lambda)$. 
The {\em dual lattice} of $\Lambda$ is
defined by 
\begin{equation}\label{eq:Dual_lattice}
\Lambda^* = \left\{\bm{x} \in \R^N : \forall \bm{y} \in \Lambda, \, \bm{x}
\cdot \bm{y} \in \Z \right\}, 
\end{equation}
and one has (see e.g. \cite[Chap. 1]{Cassels})
\begin{equation}\label{eq: vol covol}
\covol(\Lambda) \cdot \covol(\Lambda^*) =1.
\end{equation}

\subsection{Unions of translated lattices}
Understanding tori and their subtori is related to understanding
closed (additive) subgroups of $\R^N$. Any closed subgroup $H$ of
$\R^N$ is of the form $H= L + V$, where $V \subset \R^N$ is a vector
subspace, $L$ is discrete, and the orthogonal projection of $L$ onto
$V^{\perp}$ is discrete. Here $V$ is the connected component of the
identity in $H$. Recall that two discrete subgroups $L_1, L_2$
are {\em commensurable} if $L_1 \cap L_2$ is of finite index in both
$L_1$ and $L_2$, or equivalently, $L_1 + L_2$ is discrete. Note that
the connected component of $\overline{L_1 + L_2}$ depends only on the
commensurability class of $L_1, L_2$. 

We will need the following:

\begin{prop}\label{prop: uniformly discrete unions}
Let $L_1, \ldots, L_s$ be lattices in $\R^N$. Then the following are
equivalent: 
\begin{itemize}
\item[(a)] There are
$\bm{x}_i, \ i=1, \ldots , s$ such that $\bigcup_{i=1}^s \left( \bm{x}_i +
L_i \right)$ is uniformly discrete.
\item[(b)]
For each $i, j \in \{1, \ldots, s\}$, $\overline{L_i - L_j} \neq
\R^N$. 
\end{itemize}
\end{prop}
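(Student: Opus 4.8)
The plan is to prove the two implications separately, using throughout the structure theory of closed subgroups of $\R^N$ recalled just before the statement: any closed subgroup $H$ has the form $H = L + V$ with $V$ a vector subspace (the connected component of the identity) and $L$ discrete with discrete projection to $V^\perp$.

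\emph{From (b) to (a).} Assume $\overline{L_i - L_j} \neq \R^N$ for all $i,j$. First observe that $\overline{L_i - L_j}$ is a proper closed subgroup, so by the structure theorem it is contained in a proper closed subgroup of the form $V_{ij} + L_{ij}$ with $V_{ij} \subsetneq \R^N$; in particular it is contained in a proper \emph{linear hyperplane} union a discrete translate set, but more usefully: a proper closed subgroup is contained in the preimage $\pi_{ij}^{-1}(D_{ij})$ of a discrete set $D_{ij}$ under projection onto some line, or contained in a hyperplane. The cleanest route is to note that a proper closed subgroup misses a whole open ball around infinitely many points; concretely, since $\overline{L_i - L_j}\neq \R^N$, there is a vector $\bm{u}_{ij}$ and $\delta_{ij}>0$ such that $(L_i - L_j)$ avoids the $\delta_{ij}$-neighborhood of the ray $\R_{\geq 1}\bm{u}_{ij}$ — or at least avoids a neighborhood of some coset direction; from this one can choose the translation vectors $\bm{x}_i$ inductively so that $\bm{x}_i - \bm{x}_j$ lands in a region forcing $(\bm{x}_i + L_i)$ and $(\bm{x}_j + L_j)$ to stay uniformly separated. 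I would make this precise by working one pair at a time: having fixed $\bm{x}_1, \dots, \bm{x}_{i-1}$, the set of "bad" $\bm{x}_i$ (those for which $\bm{x}_i + L_i$ comes within $1$ of some $\bm{x}_j + L_j$, $j<i$) is a finite union of translates of $\overline{L_i - L_j}$, each of which is nowhere dense (a proper closed subgroup has empty interior); by Baire, a good $\bm{x}_i$ exists. This yields $\dist(\bm{x}_i + L_i, \bm{x}_j + L_j) \geq 1$ for all $i \neq j$, and since each $L_i$ is itself uniformly discrete, the union is uniformly discrete.

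\emph{From (a) to (b).} Conversely, suppose for some pair, say $i=1, j=2$, we have $\overline{L_1 - L_2} = \R^N$. I claim $\bigcup_i (\bm{x}_i + L_i)$ cannot be uniformly discrete for any choice of $\bm{x}_i$. Indeed $\dist(\bm{x}_1 + L_1,\, \bm{x}_2 + L_2)$ equals $\dist\big(\bm{x}_1 - \bm{x}_2,\, \overline{L_1 - L_2}\big) = \dist(\bm{x}_1 - \bm{x}_2, \R^N) = 0$ — wait, this only shows the two grids come arbitrarily close to \emph{each other}, not that points accumulate. To finish I would instead argue: density of $L_1 - L_2$ means there exist $\bm{\lambda}_1^{(m)} \in L_1$, $\bm{\lambda}_2^{(m)} \in L_2$ with $\bm{\lambda}_1^{(m)} - \bm{\lambda}_2^{(m)} \to (\bm{x}_2 - \bm{x}_1)$, hence $(\bm{x}_1 + \bm{\lambda}_1^{(m)}) - (\bm{x}_2 + \bm{\lambda}_2^{(m)}) \to \bm{0}$; these are pairs of points in the union getting arbitrarily close, and (after discarding the finitely many $m$ for which the two points coincide, which can happen only if the difference is eventually $\bm{0}$, excluded once the limit is approached nontrivially by density of a subgroup that is not the whole space only in the trivial case — so in fact the points are distinct for all large $m$) this contradicts uniform discreteness.

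\emph{Main obstacle.} The genuinely delicate point is the direction (b)$\Rightarrow$(a): one must extract, from the mere hypothesis that each $\overline{L_i - L_j}$ is a proper subgroup, enough room to place the translates $\bm{x}_i$ simultaneously separating all $\binom{s}{2}$ pairs. The Baire-category packaging above handles this cleanly — a proper closed subgroup of $\R^N$ is closed with empty interior (if it had interior it would be open, hence all of $\R^N$), so each "forbidden" set for $\bm{x}_i$ is meager and an admissible choice always remains — but one should double-check the bookkeeping that the constraints "$\dist(\bm{x}_i + L_i, \bm{x}_j + L_j) \geq 1$" really are of the form "$\bm{x}_i - \bm{x}_j \notin$ ($1$-neighborhood of $\overline{L_i - L_j}$)," which uses that $\dist(\bm{x}_i + L_i,\bm{x}_j+L_j) = \dist(\bm{x}_i - \bm{x}_j, L_i - L_j) = \dist(\bm{x}_i - \bm{x}_j, \overline{L_i-L_j})$ and that a neighborhood of a proper closed subgroup, while not meager, has complement that is still open dense away from the relevant translates — so one works with the closed set $\overline{L_i - L_j}$ itself having empty interior and chooses $\bm{x}_i - \bm{x}_j$ in its complement at distance $\geq 1$, which is possible exactly because the complement is open and nonempty hence contains points arbitrarily far from the subgroup... this last sub-point is where one pins down that a proper closed subgroup $H \subsetneq \R^N$ has $\sup_{\bm{y}} \dist(\bm{y}, H) = \infty$ (indeed it lies in $V + L$ with $V$ proper, and moving in a direction transverse to $V$ avoiding the discrete part $L$ gives points arbitrarily far from $H$).
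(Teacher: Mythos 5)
Your direction (a)~$\Rightarrow$~(b) is fine and essentially matches the paper's argument (density of $L_1-L_2$ gives pairs of distinct points in $\Lambda_1\cup\Lambda_2$ arbitrarily close together); the distinctness bookkeeping you worry about is handled more cleanly by simply choosing a nonzero element of the dense group $\Lambda_1-\Lambda_2$ of small norm.

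The gap is in (b)~$\Rightarrow$~(a), and it is concentrated in the very sentence where you try to ``pin down'' the key sub-claim. You want to choose, inductively, $\bm{x}_i$ so that $\dist(\bm{x}_i+L_i,\bm{x}_j+L_j)\ge 1$ for all $j<i$, which amounts to keeping $\bm{x}_i-\bm{x}_j$ out of the $1$-neighborhood of $H_{ij}\df\overline{L_i-L_j}$. To know this is possible you assert that a proper closed subgroup $H\subsetneq\R^N$ has $\sup_{\bm{y}}\dist(\bm{y},H)=\infty$, and justify it by ``moving transverse to $V$ avoiding the discrete part $L$.'' This is false: $H=\Z^N$ is a proper closed subgroup of $\R^N$ with covering radius $\sqrt{N}/2$, so its $1$-neighborhood is all of $\R^N$ for $N\ge 4$ (and a rescaled lattice gives counterexamples in every dimension). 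In the decomposition $H=V+L$, the projection of $L$ to $V^\perp$ is discrete but may well be a full-rank lattice there, so the direction ``transverse to $V$'' does not escape $L$. As a result the forbidden set for $\bm{x}_i$ can be all of $\R^N$ and the induction cannot get started.

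The fix, which is what the paper does, is to demand only $\bm{x}_i-\bm{x}_j\notin H_{ij}$ rather than distance $\ge 1$. Each $\bm{x}_j+H_{ij}$ is a translate of a proper closed subgroup, hence closed with empty interior, so the set of good $\bm{x}_i$ is residual (or full measure) and the choice is always possible. Since there are only finitely many pairs $(i,j)$ and each $H_{ij}$ is closed, the number $\vre\df\min_{i\ne j}\dist(\bm{x}_i-\bm{x}_j,H_{ij})$ is then automatically positive; shrinking it further below $\min_i\lambda_1(L_i)$ gives the uniform discreteness constant, by the computation you already carry out. In short: do not try to prescribe the separation constant $1$ in advance --- let closedness hand you a positive constant after the fact.
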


\begin{proof}
We first prove $(a) \implies (b)$. Let $\Lambda_i
= \bm{x}_i + L_i$ for each $i$. Assume by contradiction that
$L_i - L_j$ is dense in $\R^N$. Then $\Lambda_i -\Lambda_j = L_i - L_j
+ \bm{x}_i - \bm{x}_j$ is also
dense, and in particular for any $\vre>0$ there are $\bm{x} \in \Lambda_i, \bm{y} \in
\Lambda_j$ such that $0 < \| \bm{x} - \bm{y}\| < \vre.$ This means
that $\Lambda_i \cup \Lambda_j$ is not uniformly discrete. 

In order to prove $(b) \implies (a)$ we will define $H_{ij} =
\overline{L_i - L_j} \varsubsetneq \R^N$ and show that if $\bm{x}_1,
\ldots, \bm{x}_s$ satisfy 
\begin{equation}\label{eq: ij satisfy}
\bm{x}_i - \bm{x}_j \notin H_{ij} \text{ for all } i,j,
\end{equation}
then $\bigcup_{i=1}^s \left( \bm{x}_i +
L_i\right)$ is uniformly discrete. Note that \equ{eq: ij satisfy} holds for
almost every choice of $\bm{x}_1, \ldots, \bm{x}_s$. Let $\vre>0$ be
smaller than the minimal distance between $\bm{x}_i - \bm{x}_j$ and
$H_{ij}$, and also smaller than the minimal distance between two
distinct points in the same $L_i$. Let $\bm{y}_1, \bm{y}_2$ be two
distinct elements of $\bigcup_{i=1}^s
\Lambda_i$. If $\bm{y}_1, \bm{y}_2$ belong to the same $\Lambda_i$ then they
are at least a distance $\vre$ apart, and otherwise we can write
$\bm{y}_1  = \bm{x}_{i'} + \ell_1, \ \bm{y}_2  =  
\bm{x}_{j'} + \ell_2$ with $i' \neq j'$, $\ell_1 \in L_{i'}$ and $\ell_2 \in
L_{j'}$. Then $\ell_2 - \ell_1 \in H_{i'j'}$ and hence
$$
\|\bm{y}_1 - \bm{y}_2\| \geq \|\bm{x}_{i'} - \bm{x}_{j'} - (\ell_2 -
\ell_1)\| \geq \vre,
$$
as required. 
\end{proof}

\begin{cor}\label{cor: 2 grids not enough}
For any two translated lattices $\Lambda_1, \Lambda_2 \subset \R^N$,
if the union $Y= \Lambda_1 \cup \Lambda_2$ is uniformly
discrete, then $Y$ is not a dense forest; in fact there exists
$\vre>0$ and an $(N-1)$-dimensional affine subspace 
$Z \subset \R^N$ such that $Y$ contains no points
in the $\vre$-neighborhood of $Z$. 

\end{cor}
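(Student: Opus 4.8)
The plan is to produce, out of the uniform discreteness of $Y=\Lambda_1\cup\Lambda_2$, a \emph{single} affine hyperplane $Z\subset\R^N$ and a \emph{single} $\vre>0$ such that both grids $\Lambda_1$ and $\Lambda_2$, and hence $Y$, avoid the $\vre$-neighborhood of $Z$. Since (for $N\ge 2$) an affine hyperplane contains line segments of arbitrary length, this gives at once that $Y$ is not a dense forest, and it is exactly the ``in fact'' assertion. In spirit this just extends from one lattice to two the observation made above for periodic sets, namely that a single grid misses a neighborhood of some affine hyperplane.

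Write $\Lambda_i=\bm{x}_i+L_i$ with $L_i$ a lattice, and set $H\df\overline{L_1-L_2}=\overline{L_1+L_2}$. First I would invoke the implication $(a)\Rightarrow(b)$ of Proposition~\ref{prop: uniformly discrete unions}: since $Y$ is uniformly discrete, $H$ is a \emph{proper} closed subgroup of $\R^N$. It contains $L_1\cup L_2$, and since it contains the full-rank lattice $L_1$ it spans $\R^N$ as a vector space. Next I would feed $H$ into the structure theorem for closed subgroups of $\R^N$ recalled above, writing $H=M+V$ with $V$ a vector subspace, $M$ discrete, and $P(M)$ discrete, where $P\colon\R^N\to V^{\perp}$ is the orthogonal projection. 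Because $H$ spans $\R^N$ but $H\ne\R^N$, the subspace $V^{\perp}$ is nonzero and $P(M)$ is a full-rank lattice inside it; I then pick a nonzero vector $\bm{w}$ in the dual lattice of $P(M)$ inside $V^{\perp}$. Since $\bm{w}\perp V$, every $\bm{h}=\bm{m}+\bm{v}\in H$ satisfies $\bm{w}\cdot\bm{h}=\bm{w}\cdot P(\bm{m})\in\Z$; in particular $\bm{w}\cdot\bm{\ell}\in\Z$ for all $\bm{\ell}\in L_1\cup L_2$.

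With $\bm{w}$ in hand the rest is bookkeeping. Put $a_i\df\bm{w}\cdot\bm{x}_i$; then any $\bm{y}\in\Lambda_i$ has $\bm{w}\cdot\bm{y}\in a_i+\Z$, so $\bm{w}\cdot Y\subseteq S\df(a_1+\Z)\cup(a_2+\Z)$, a closed proper subset of $\R$ (a union of at most two cosets of $\Z$). I would then choose $t_0\in\R$ and $\delta>0$ with $\dist(t_0,S)\ge\delta$ (for instance $t_0=a_1+\tfrac12$ when $a_1\equiv a_2\pmod 1$, and the midpoint of a complementary interval otherwise), and set $Z\df\{\bm{y}\in\R^N:\bm{w}\cdot\bm{y}=t_0\}$, an affine subspace of dimension $N-1$. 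For every $\bm{y}\in Y$,
$$\dist(\bm{y},Z)=\frac{\absolute{\bm{w}\cdot\bm{y}-t_0}}{\norm{\bm{w}}}\ \ge\ \frac{\delta}{\norm{\bm{w}}}\df\vre>0,$$
so $Y$ contains no point in the $\vre$-neighborhood of $Z$, which is what we want.

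The one step that is not purely formal — and the point I expect to be the main obstacle — is the passage from ``$H$ is a proper closed subgroup of $\R^N$'' to ``there is a nonzero $\bm{w}$ with $\bm{w}\cdot H\subseteq\Z$''; this is precisely where the structure theorem for closed subgroups of $\R^N$ is used, together with the remark that $H$ spans $\R^N$ because it is a supergroup of a full-rank lattice. Granting that, the containment $\bm{w}\cdot Y\subseteq S$, the existence of a gap in $S$, and the distance estimate are all immediate.
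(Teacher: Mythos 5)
Your proposal is correct and uses the same key ingredients as the paper's proof — Proposition~\ref{prop: uniformly discrete unions}(a)$\Rightarrow$(b) together with the structure theorem for closed subgroups of $\R^N$ — with a dual phrasing: rather than choosing a codimension-one subspace $V_0\supseteq V$ spanned by elements of $H$ and projecting to $V_0^\perp$ as the paper does, you pass to $V^\perp$, note that $P(M)$ is a full-rank lattice there, and pick a nonzero $\bm{w}$ in its dual lattice to define the functional cutting out $Z$. This is essentially the same argument (implicitly your $\bm{w}$ determines the paper's $V_0=\bm{w}^\perp$), and is a valid, complete proof.
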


\begin{proof}
Let $\Lambda_i = L_i +\bm{x}_i, \ i=1,2$, let $H = \overline{L_1 -
  L_2}$, and let $V$ be the connected component of the identity in
$H$. Since $Y$ is uniformly discrete, we have by Proposition
\ref{prop: 
  uniformly discrete unions} that $V \varsubsetneq
\R^N$. Let 
$V_0$ be an $(N-1)$-dimensional subspace of $\R^N$ containing $V$ which is
spanned by elements of $H$, and let $P : \R^N \to V_0^\perp$ be the
orthogonal projection. Then the choice of $V_0$ ensures that $P (H)$
is discrete, and since $L_1 \cup  
L_2 \subset H$, we have that 
$Y \subset
(\bm{x}_1 + H) \cup (\bm{x}_2 +H)$. If we take a point $\bm{z} \in
V_0^\perp \setminus P \left( (\bm{x}_1 + H) \cup (\bm{x}_2 +H) \right)$,
then the conclusion of the Corollary will be valid with $Z =
P^{-1}(\bm{z})$.  
\end{proof}
Note that by Theorem \ref{thm: finitely many lattices}, it is
possible to obtain a uniformly 
discrete dense forest as a union of {\em three} translated lattices in
$\R^2$. 

\subsection{Visit times in a dynamical system}
Cut-and-project sets also arise as the set of \emph{visit times} to a
section of an $\R^n$-action on $\T^N$, $n < N$. Let $V\cong
\R^n$ be a subspace of $\R^N$. Then $V$ acts on $\T^N$ via the linear action 
\[\bm{v} \cdot \bm{x} = \bm{x}+\pi(\bm{v}).\] 
Following the terminology in \cite{HKW}, for $\bm{x}_0\in\T^N$ and
$\sect\subset\T^N$ we define the \emph{$(n,N)$-toral dynamics set} by
the set of `return times' to $\sect$: 

\begin{equation}\label{eq:def-toral_dynamics_set}
Y_{\sect,\bm{x}_0}\df\{\bm{v}\in V: \bm{v}\cdot \bm{x}_0 \in \sect\}.
\end{equation}

An $\R^n$-action is called {\em minimal} if all orbits are dense. 
It is well-known (see e.g. \cite[Chap. 3]{CFS}) that the above
$\R^n$-action is minimal if and only if $V$ is a {\em totally 
irrational subspace}; namely, if and only if it is contained in no
proper rational
subspaces of $\R^N$. Moreover, for any $\bm{x} \in \T^N$, the closure
$\overline{V \cdot \bm{x}}$ is an affine subtorus of $\T^N$ on which the
$V$-action is minimal; that is $\overline{V \cdot \bm{x}} = \bm{x} + \pi(U)$, where
$U$ is the smallest rational subspace containing $V$.

A subset $\sect \subset \T^N$ is called a \emph{linear section} (for
the $V$-action on $\T^N$) if 
$\sect=\pi(\window)$ for a bounded set $\window \subset U$, where $U$
          is a $k$-dimensional affine subspace of $\R^N$ that is
          transverse to $V$, and $\window$ has non-empty interior in $U$. 

We will repeatedly use the following well-known fact (which was
mentioned without proof in \cite{HKW}): 

\begin{prop}\label{prop: candp toral}
If $U, V$ are subspaces of $\R^N$ with $\R^N = U \oplus V$, and  $\sect
\subset \pi(U)$ is a linear section for the 
associated $V$-action on $\T^N$, then for any $\bm{x_0} \in \T^N$, the set $Y_{\sect, \bm{x_0}}$ is
a cut-and-project set for a decomposition in which $V =
V_{phys}$ and $U = V_{int}$. Moreover any cut-and-project set arises
in this way. 

\end{prop}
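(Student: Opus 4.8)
The plan is to prove Proposition~\ref{prop: candp toral} by unwinding the definitions on both sides and checking that the two constructions agree. Throughout, I identify $\T^N = \R^N/\Z^N$ with $\pi : \R^N \to \T^N$ the projection, and I work with the fixed coordinate decomposition $\R^N = U \oplus V$, writing $\pi_U : \R^N \to U$ and $\pi_V : \R^N \to V$ for the associated linear projections. The linear section is $\sect = \pi(\window)$ for a bounded set $\window \subset U$ with nonempty interior in $U$, and $U$ is transverse to $V$ (here this means exactly $\R^N = U \oplus V$, so $\pi_V$ restricts to a linear isomorphism on any fiber parallel to $U$).

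First I would prove the forward direction: given $\bm{x}_0 \in \T^N$, show $Y_{\sect, \bm{x}_0}$ is a cut-and-project set with $V_{phys} = V$, $V_{int} = U$. Pick any lift $\tilde{\bm{x}}_0 \in \R^N$ of $\bm{x}_0$. By definition $\bm{v} \in Y_{\sect,\bm{x}_0}$ iff $\bm{v} \in V$ and $\bm{x}_0 + \pi(\bm{v}) \in \pi(\window)$, i.e.\ iff there is $\bm{m} \in \Z^N$ with $\tilde{\bm{x}}_0 + \bm{v} + \bm{m} \in \window$. Set $L \df \tilde{\bm{x}}_0 + \Z^N$, a translated lattice in $\R^N$. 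Then the condition becomes: there exists $\bm{\ell} \in L$ with $\bm{\ell} + \bm{v} \in \window \subset U$. Since $\R^N = U \oplus V$ and $\bm{v} \in V$, the element $\bm{\ell} + \bm{v}$ lies in $U$ iff $\pi_V(\bm{\ell}) = -\bm{v}$, in which case $\pi_U(\bm{\ell}) = \bm{\ell} + \bm{v}$; hence the condition is equivalent to the existence of $\bm{\ell} \in L$ with $\pi_U(\bm{\ell}) \in \window$ and $\pi_V(\bm{\ell}) = -\bm{v}$. Now using the roles $V_{phys} = V$, $V_{int} = U$, so that $\pi_{phys} = \pi_V$ and $\pi_{int} = \pi_U$, this says precisely $-\bm{v} \in \pi_{phys}\!\left(L \cap \pi_{int}^{-1}(\window)\right) = \Lambda(L, \window)$. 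Thus $Y_{\sect,\bm{x}_0} = -\Lambda(L,\window)$; replacing $L$ by $-L$ (still a translated lattice) and $\window$ by $-\window$ gives $Y_{\sect,\bm{x}_0} = \Lambda(-L, -\window)$, exhibiting it as a cut-and-project set with the asserted associated spaces. (If one prefers $Y$ itself without the sign flip, one notes $-L$ and $-\window$ are again a legitimate translated lattice and bounded set, so the conclusion holds verbatim.)

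Next I would prove the converse: every cut-and-project set arises this way. Let $Y = \Lambda(L', W)$ for a translated lattice $L' \subset \R^N$, a bounded $W \subset V_{int}'$ with the given decomposition $\R^N = V_{phys}' \oplus V_{int}'$. Up to relabeling, set $V \df V_{phys}'$, $U \df V_{int}'$ (so $\R^N = U \oplus V$), and $\window \df W \subset U$, which is bounded; to have a genuine linear section we additionally need $\window$ to have nonempty interior in $U$, but since the statement of the proposition builds the section into its hypotheses, one can enlarge $W$ to $W' \supseteq W$ with nonempty interior or simply observe that the cut-and-project set is unchanged on the relevant points — here I would follow whatever convention the paper adopted for windows (the excerpt only requires $W$ bounded, so I would phrase the converse as: for any such $Y$ there is $\bm{x}_0$ and a linear section $\sect$ with $Y = Y_{\sect,\bm{x}_0}$, possibly after the harmless adjustment of the window). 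Write $L' = \tilde{\bm{x}}_0 + \Lambda$ for a (full-rank) lattice $\Lambda$ and some $\tilde{\bm{x}}_0 \in \R^N$. The subtlety is that $\Lambda$ need not be $\Z^N$, so I apply a linear automorphism $A \in GL_N(\R)$ with $A\Lambda = \Z^N$; this sends the decomposition $\R^N = U \oplus V$ to another decomposition $\R^N = AU \oplus AV$ and $\window$ to $A\window$. Then the computation of the previous paragraph (run in reverse, with lattice $\Z^N$, translate $A\tilde{\bm{x}}_0$, window $A\window \subset AU$, acting subspace $AV$) shows $A Y = -Y_{\sect, \bm{x}_0}$ for $\sect = \pi(-A\window)$ and $\bm{x}_0 = \pi(-A\tilde{\bm{x}}_0)$, a linear section for the $AV$-action on $\T^N$. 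Since $A$ was chosen so that the defining lattice is standard, this realizes (a linear image of, hence up to the paper's standing conventions identifying cut-and-project sets related by such normalizations) $Y$ as a toral dynamics set; I would make explicit here that the paper's two conventions for cut-and-project sets — fixing the coordinate subspaces and varying $L$, versus fixing $L = \Z^N$ and varying the subspaces — are interchanged by exactly such an $A$, which is the content of the remark preceding the proposition, so no information is lost.

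The main obstacle I anticipate is purely bookkeeping: keeping straight which of $U, V$ plays the role of $V_{phys}$ versus $V_{int}$, the sign flips coming from $\bm{\ell} + \bm{v} \in U \iff \pi_V(\bm{\ell}) = -\bm{v}$, and the passage between a general translated lattice $L'$ and the standard lattice $\Z^N$ via $A \in GL_N(\R)$ (which forces one to also transform the subspaces, matching the two conventions described in \S\ref{subsec: cut and project}). There is also a minor point about whether the window is required to have nonempty interior: in the forward direction this is automatic from the hypothesis on $\sect$, while in the converse one must reconcile the paper's liberal ``$W$ bounded'' convention for cut-and-project sets with the ``nonempty interior'' requirement in the definition of a linear section — I would handle this by noting that a cut-and-project set with empty-interior window can always be viewed inside one with nonempty-interior window, or by simply restricting the claim ``any cut-and-project set arises in this way'' to windows with nonempty interior, which is the only case used later in the paper. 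Modulo these conventions, both inclusions are immediate from the definition of $\Lambda(L,W)$ and the definition \eqref{eq:def-toral_dynamics_set} of $Y_{\sect,\bm{x}_0}$.
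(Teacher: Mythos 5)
Your proof is correct and follows essentially the same route as the paper's: both unwind the definition of $Y_{\sect,\bm{x}_0}$ into a chain of equivalences that identifies it, after a sign flip on the lattice and window, with $\Lambda(L,W)$ for $L$ a translate of $\Z^N$. The only place you go beyond the paper's one-display argument is in spelling out the converse for a cut-and-project set whose lattice is not a translate of $\Z^N$ (applying $A\in GL_N(\R)$ to normalize, which is exactly the switch between the two conventions the paper mentions) and in flagging the bounded-window vs.\ nonempty-interior mismatch; the paper leaves both of these implicit, so your additions are harmless elaborations rather than a different argument.
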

\begin{proof}
Let 
$$\bm{x}_0\in \T^N, \ V = V_{phys}, \ U = V_{int},\ L =
-\bm{x}_0 + \Z^N, \ \sect = \pi(\window), \ W = -\window.$$
Then 
\[\begin{split}
\bm{v} \in Y_{\sect, \bm{x}_0}  \iff & \bm{v} + \bm{x}_0 \in \sect = \pi(\window) \\
\iff & \exists \bm{w} \in W \text{ such that } \bm{v} + \pi(\bm{x}_0)  = \pi(-\bm{w}) \\
\iff & \exists \bm{z} \in \Z^N, \, \bm{w} \in W \text{ such that } v +
\bm{x}_0  = -\bm{w} + \bm{z}\\ 
\iff & \exists \bm{z'}\in  L, \, \bm{w} \in W \text{ such that } v +
\bm{w} = \bm{z}' \\
\iff & \exists \bm{z}' \in L \text{ such that } \bm{v} = \pi_{phys}(\bm{z}')  , \, \pi_{int}(\bm{z}')  \in W.
\end{split}
\]
\end{proof}




\subsection{More detailed statements}
Via Proposition \ref{prop: candp toral}, we see that Theorem \ref{thm:CP_are_not_DF}
asserts that $(n,N)$ toral dynamics sets arising from linear sections are
not dense forests. 
On the other hand, in \cite{SW}, a uniformly discrete dense forest
was constructed, using visit times to a section in
an $\R^n$-action on a compact 
homogeneous space. We would like to modify this construction and use
dynamics of linear toral flows instead, but as Theorem \ref{thm:CP_are_not_DF} shows,
if we use a linear section the resulting set will not be a dense
forest. To rectify this we will allow a larger class of sets to serve
as the section $\sect$. 

For this
discussion we will specialize to the case $N=3, \, n=2$, so that a section is
one-dimensional. We say that $\sect \subset \T^3$ is a
\emph{piecewise linear 
  unavoidable section} if  
\begin{enumerate}
	\item $\sect$ is a finite union $J_1 \cup \cdots \cup J_\ell$
          where the $J_i$ are disjoint projections under $\pi$ of
          closed
          line segments in $\R^3$ (of finite length). 
	\item $\sect$ intersects every co-dimension 1 sub-torus, that is $\sect
          \cap \pi(\bm{x} + Q) \neq \varnothing$ for every $\bm{x} \in \R^3$ and
          every 
          $2$-dimensional rational subspace $Q\subset \R^3$.  
\end{enumerate} 

The following results will be proved in \S
\ref{sec:unions_of_CP_are_DF}. They immediately imply 
Theorem \ref{thm: immediately implied}. 
\begin{thm}\label{thm: good sections exist}
	Piecewise linear unavoidable sections in $\T^3$ exist. 
\end{thm}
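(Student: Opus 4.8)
The plan is to build $\sect$ concretely as a union of projections of finitely many line segments, and then to verify the unavoidability condition (2) by a dimension/parameter count over the rational $2$-dimensional subspaces $Q$ of $\R^3$. First I would observe that, up to the $\mathrm{GL}_3(\Z)$-action (equivalently, re-choosing a basis of $\Z^3$), a $2$-dimensional rational subspace of $\R^3$ is $\bm{p}^\perp$ for a primitive integer vector $\bm{p} \in \Z^3$, and the sub-torus $\pi(\bm{x}+Q)$ is then governed by the single linear functional $\bm{p}\cdot(\cdot)$ reduced mod $1$: the coset $\bm{x}+Q$ meets $\pi(I)$ for a segment $I = \{\bm{a} + t\bm{b} : t\in[0,1]\}$ precisely when the value $\bm{p}\cdot\bm{x}$ lies in the interval (mod $1$) swept out by $t\mapsto \bm{p}\cdot\bm{a} + t\,\bm{p}\cdot\bm{b}$, i.e.\ an interval of length $|\bm{p}\cdot\bm{b}|$. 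So condition (2) amounts to: for every primitive $\bm{p}\in\Z^3$ and every $c\in\R/\Z$, at least one of the segments $J_i = \pi(I_i)$ has the property that $c$ lies in the mod-$1$ interval traced by $\bm{p}$ along $I_i$.

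The key step is to choose the directions $\bm{b}_i$ of the segments so that, for \emph{every} primitive $\bm{p}$, at least one $|\bm{p}\cdot\bm{b}_i|$ is bounded below by a positive constant \emph{and} an appropriate combined covering condition holds. Here is the mechanism I would use. Pick a small number of segments whose direction vectors $\bm{b}_1,\dots,\bm{b}_\ell$ are chosen so that no nonzero $\bm{p}\in\Z^3$ is simultaneously near-orthogonal to all of them; concretely, if the $\bm{b}_i$ include (scaled) versions of $\bm{e}_1,\bm{e}_2,\bm{e}_3$ and a generic vector, then $\max_i |\bm{p}\cdot\bm{b}_i| \ge \kappa\,\|\bm{p}\|$ for a constant $\kappa>0$, hence $\ge \kappa$ for primitive $\bm{p}$. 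Choosing the segments \emph{long} (large $\|\bm{b}_i\|$) makes the swept interval length $|\bm{p}\cdot\bm{b}_i|$ large: once $|\bm{p}\cdot\bm{b}_i|\ge 1$ the mod-$1$ image of $t\mapsto \bm{p}\cdot\bm{a}_i + t\,\bm{p}\cdot\bm{b}_i$ is \emph{all} of $\R/\Z$, so that single segment already meets every coset of $\bm{p}^\perp$. Thus it suffices to arrange that for every primitive $\bm{p}$ there is an index $i$ with $|\bm{p}\cdot\bm{b}_i|\ge 1$; taking the $\bm{b}_i$ long enough relative to $1/\kappa$ (say $\|\bm{b}_i\|$ large and the directions as above, so $\max_i|\bm{p}\cdot\bm{b}_i|\ge \kappa\|\bm{b}\|_{\min}\|\bm{p}\|\ge 1$ for all nonzero integer $\bm{p}$) does the job. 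Finally, the base points $\bm{a}_i$ are irrelevant to condition (2) once the intervals are all of $\R/\Z$, so we may also choose them (and perturb the $\bm{b}_i$ slightly within the same constraints) to make the finitely many projected segments $J_1,\dots,J_\ell$ pairwise disjoint in $\T^3$, which is a generic condition since $\dim \T^3 = 3 > 1+1$; this secures condition (1).

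The main obstacle I anticipate is handling the non-primitive or, more precisely, the \emph{uniformity over all infinitely many} rational planes at once: a naive argument treats one $Q$ at a time, but we need a single finite configuration working for all $\bm{p}$ simultaneously, which is exactly why the lower bound $\max_i|\bm{p}\cdot\bm{b}_i|\ge\kappa\|\bm{p}\|$ (uniform in $\bm{p}$) and the ``length $\ge 1 \Rightarrow$ full mod-$1$ interval'' observation are doing the real work — the first turns infinitely many conditions into one linear-algebra inequality, and the second converts that inequality into the covering statement without any further Diophantine input. A secondary technical point is to write the argument invariantly rather than in a fixed basis, so that ``$|\bm{p}\cdot\bm{b}_i|\ge 1$'' is literally the length of the relevant interval on $\R/\Z$; this is where I would be careful, but it is bookkeeping rather than a genuine difficulty.
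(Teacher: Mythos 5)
Your proposal is correct and takes a genuinely different, and arguably cleaner, route than the paper's sketched proof. The paper enumerates the rational $2$-subtori $T_j$, uses Proposition \ref{prop: size of q} and \eqref{eq: reciprocal} to argue that $\Vol(\T^3/T_j)\to 0$, concludes that three segments in linearly independent directions surject onto $\T^3/T_j$ for all $j$ large, and then patches in finitely many extra segments to handle the remaining small $j$. You replace this ``limit plus patch-up'' argument with a single uniform estimate: writing $Q=\bm{p}^\perp$ with $\bm{p}\in\Z^3$ primitive, a segment with direction vector $\bm{b}$ surjects onto the circle $\T^3/\pi(Q)$ as soon as $|\bm{p}\cdot\bm{b}|\ge 1$, and since $\|\bm{p}\|_\infty\ge 1$ for every nonzero integer vector, three axis-parallel segments $\bm{b}_i=L\bm{e}_i$ with $L\ge 1$ already give $\max_i|\bm{p}\cdot\bm{b}_i|=L\|\bm{p}\|_\infty\ge 1$ uniformly in $\bm{p}$ --- no enumeration, no exceptional tori, no compactness argument. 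This buys an explicit three-segment construction (after choosing base points with pairwise distinct residues in the appropriate coordinate, which as you say is a generic condition), and it generalizes verbatim to $\T^{n+1}$ with $n+1$ coordinate segments, making precise the paper's stated-but-unproved claim that such sections exist whenever $N=n+1$. One stylistic remark: the constant $\kappa$ and the extra ``generic'' direction in your write-up are unnecessary (the three coordinate directions already give the bound directly), and the intermediate inequality $\max_i|\bm{p}\cdot\bm{b}_i|\ge\kappa\|\bm{b}\|_{\min}\|\bm{p}\|$ conflates unit direction vectors with scaled ones, but the underlying mechanism is sound and the proof goes through.
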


In fact, as we will see, they exist whenever the dimensions $n, N$
satisfy $N=n+1$, but we will not be using this fact.

\begin{thm}\label{thm:UDDF_as_a_union_of_CP} 
	For every piecewise linear unavoidable section $\sect \subset
        \T^3$, every $\bm{x}_0 \in \T^3$, and 
        every $2$-dimensional subspace 
        $V \subset \R^3$ which does not contain rational lines
        and is
        transverse to $\sect$, the set   
	\[Y = Y_{V, \sect, \bm{x}_0} \df \left\{\bm{v}\in V: \bm{v}
        \cdot \bm{x}_0 \in \sect \right\}\]
	is a uniformly discrete dense forest in $V\cong\R^2$. The set
        $Y$ 
is a finite union of 
cut-and-project sets with associated dimensions
          $(2,3)$, with the same
          physical space $V_{phys} =V$.
\end{thm}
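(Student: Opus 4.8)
The plan is to split the statement into three independent assertions: that $Y$ is a finite union of cut-and-project sets, that it is uniformly discrete, and that it is a dense forest, and to prove each in turn, saving the last for the main part of the argument. First I would observe that since $\sect = J_1 \cup \cdots \cup J_\ell$ with each $J_i = \pi(\window_i)$ for a closed line segment $\window_i$ in an affine line $U_i$ of $\R^3$ transverse to $V$ (after possibly translating, so that $U_i$ is a one-dimensional \emph{linear} subspace and $\window_i \subset U_i$ is a bounded interval), we can write $Y = \bigcup_{i=1}^\ell Y_{V,J_i,\bm{x}_0}$, and each $Y_{V,J_i,\bm{x}_0}$ is of the form $\{\bm{v} \in V : \bm{v}\cdot\bm{x}_0 \in \pi(\window_i)\}$, i.e. a linear section for the $V$-action with internal space $U_i$. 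By Proposition~\ref{prop: candp toral} each summand is a cut-and-project set with $V_{phys} = V$ and $V_{int} = U_i$; the associated dimensions are $(2,3)$ since $\dim U_i = 1$. This gives the last sentence of the theorem for free.

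Next, uniform discreteness. Since $\sect$ is a finite union of projections of line segments, it is a compact set of (two-dimensional) measure zero, and in particular it is contained in a set of the form $\pi(\bigcup_i (\window_i^+))$ where $\window_i^+$ is a small product-neighborhood; more to the point, I would invoke the standard fact that a cut-and-project set with bounded window and a totally irrational physical space is uniformly discrete (this is precisely because two return times $\bm{v}_1, \bm{v}_2$ with $\bm{v}_1 - \bm{v}_2$ small would force two nearby lattice points of $L = -\bm{x}_0 + \Z^3$ inside $\pi_{int}^{-1}(W - W)$, which is impossible when $W$ is bounded, $\pi_{phys}|_L$ injective, and $\bm v_1-\bm v_2$ is smaller than $\lambda_1$). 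Each of the finitely many cut-and-project summands is uniformly discrete, but a finite union of uniformly discrete sets need not be uniformly discrete; here I would either appeal to Proposition~\ref{prop: uniformly discrete unions} (checking that the relevant commensurability/closure condition $\overline{L_i - L_j} \neq \R^N$ holds because $V$ contains no rational lines, which forces the projections $\pi_{int}$ to the distinct lines $U_i$ to interact rigidly), or argue directly: if $\bm{v}_1, \bm{v}_2$ are close return times landing in $J_i, J_j$ respectively, then $\bm{v}_1 - \bm{v}_2$ small puts $\pi(\bm v_1-\bm v_2)$ close to $0$ in $\T^3$ while simultaneously $(\bm v_1-\bm v_2)\cdot\bm x_0$ lies close to $J_i - J_j$; since the segments $J_i, J_j$ are compact and disjoint (or, if $i = j$, we are in the single-window case already handled), a short computation using that $V$ is transverse to each $U_i$ and contains no rational line bounds $\|\bm{v}_1 - \bm{v}_2\|$ from below. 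This is a routine but slightly fiddly finite case-check.

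The heart of the proof is that $Y$ is a dense forest, and this is where I expect the main obstacle. The strategy is: fix $\vre > 0$, and let $\sigma$ be a line segment in $V$ of length $R = v(\vre)$, parametrized as $t \mapsto \bm{v}_0 + t\bm{u}$, $t \in [0,R]$, with $\bm{u}$ a unit vector in $V$. I want to find $t$ with $\dist(\bm{v}_0 + t\bm{u}, Y) < \vre$, i.e. a point on $\sigma$ whose image under the $V$-action lands within $\vre$ (in $\T^3$) of $\sect$. Consider the curve $\gamma(t) = (\bm{v}_0 + t\bm{u})\cdot\bm{x}_0 = \bm{x}_0 + \pi(\bm v_0) + t\,\pi(\bm u)$ in $\T^3$; this is a straight line segment of length $R$ in direction $\pi(\bm u)$. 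The key point is that the direction $\bm u \in V$ is \emph{irrational}: since $V$ contains no rational lines, $\pi(\bm u)$ generates a line that equidistributes in the $2$-torus $\overline{\pi(V)}$ (note $\overline{\pi(V)}$ is all of $\T^3$ iff $V$ is totally irrational; if $V$ lies in a rational $2$-plane $U'$ then $\overline{V\cdot\bm x_0} = \bm x_0 + \pi(U')$ is a $2$-subtorus, and condition (2) of unavoidability — $\sect$ meets every codimension-$1$ subtorus — guarantees $\sect$ still meets this orbit closure). So the line $\gamma$ becomes $\vre$-dense in the $2$-dimensional orbit closure as $R \to \infty$, and in particular, since $\sect$ has nonempty relative interior inside that orbit closure (each $J_i$ is a nondegenerate segment and the $U_i$ are transverse to $V$, so $J_i$ crosses the orbit closure), $\gamma$ must come $\vre$-close to $\sect$. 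Turning this into a \emph{uniform} visibility bound $v(\vre)$ independent of the segment requires a \emph{quantitative} equidistribution rate for lines in $\T^3$ (or in a $2$-subtorus) valid uniformly over all irrational directions arising as unit vectors in $V$ — this is exactly the subtlety that Theorem~\ref{thm:CP_are_not_DF} shows is unavoidable for a genuinely linear section, and it is why we needed the \emph{piecewise} linear unavoidable section: the transversality of the several segments $J_i$ to $V$, combined with condition (2), is what lets us avoid the "forbidden slab'' obstruction. Concretely, I would: (i) reduce to the case where the orbit closure is all of $\T^3$ (the subtorus case being similar with $\T^3$ replaced by the subtorus and $\sect$ replaced by its intersection, which is nonempty and has interior by unavoidability); (ii) fix a small ball $B$ inside $\sect$ relative to $\T^3$... wait, $\sect$ has measure zero, so instead fix a point $\bm p$ in the relative interior of some $J_i$ inside the orbit-closure $2$-torus and a small $2$-disk $D \subset J_i$ of radius comparable to $\vre$ around $\bm p$; (iii) the preimage of the $\vre/2$-neighborhood of $D$ under the time-$t$ map is an open subset of $\T^3$ of volume bounded below by $c\,\vre$ (a slab transverse to $\bm u$ of thickness $\sim\vre$ and cross-section $\sim\vre^2$), actually of volume $\sim \vre^2$ since $D$ is $2$-dimensional; (iv) a quantitative ergodic/equidistribution estimate for the linear flow in direction $\pi(\bm u)$ then shows the segment $\gamma$ of length $R$ hits this open set provided $R \geq C/(\text{volume}) \geq C\vre^{-2}$ — but this is false without a Diophantine condition on $\bm u$, so in fact one only gets a bound of the form $v(\vre) = v(\vre)$ with \emph{some} finite value for each $\vre$, namely $v(\vre) = \sup$ over unit $\bm u \in V$ of the time needed, which is finite because $V$ is closed (hence its unit sphere is compact) and the "time to get $\vre$-close'' is a function of $\bm u$ that, while possibly unbounded on irrational-but-not-Diophantine directions... hmm. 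The honest approach, and the one I would commit to, is: the set of unit vectors in $V$ is compact; for each such $\bm u$ the line in direction $\pi(\bm u)$ equidistributes in the relevant subtorus (by minimality, which holds since $V$ has no rational lines so no $\pi(\bm u)$ lies in a proper rational subtorus of the orbit closure); hence for each fixed $\bm u$ and each $\bm v_0, \bm x_0$ there is a finite length after which the segment comes $\vre$-close to $\sect$; then use a compactness argument (or uniform continuity of the relevant hitting time in $\bm u$, which holds because $\sect$ is compact and has nonempty relative interior) to extract a uniform bound $v(\vre)$. The main obstacle is precisely making this last uniformity rigorous — controlling the hitting time uniformly over the compact family of directions — and here the disjointness and compactness of the $J_i$ in condition (1), together with the fact that transversality to $V$ is an open condition, do the work. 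This non-effective argument is consistent with the remark in the introduction that the proof "does not provide bounds on its visibility function''.
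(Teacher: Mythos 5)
Your overall decomposition (finite union of cut-and-project sets, uniform discreteness, dense forest) matches the paper's, and the first two parts are essentially right. The cut-and-project identification via Proposition~\ref{prop: candp toral} applied to each $J_i$ separately is exactly how the paper concludes. For uniform discreteness the paper is terser than you: it simply observes that the $J_i$ are closed and pairwise disjoint in $\T^3$ (giving a positive gap between distinct segments) and that $\sect$ is transverse to $V$ (giving a positive gap between successive returns to the same $J_i$); your worry about finite unions of uniformly discrete sets is legitimate in general, but here the direct argument you sketch does work and no commensurability machinery is needed.

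The dense-forest part is where there is a genuine gap, and to your credit you flag it yourself. Your strategy is: each unit $\bm u\in V$ generates a line that equidistributes in its orbit closure (a rational subtorus of dimension $\geq 2$, by the no-rational-lines hypothesis), unavoidability guarantees $\sect$ meets that orbit closure, so for each fixed $\bm u$ there is a finite hitting length; then ``use a compactness argument (or uniform continuity of the relevant hitting time in $\bm u$)'' to get uniformity. That last parenthetical claim is false: the hitting time is not continuous in $\bm u$. As $\bm u$ approaches a direction whose orbit closure is a $2$-subtorus $T$, a segment pointing nearly along $T$ can wind around $T$ for arbitrarily long before spreading out in the transverse direction, so the hitting time of a neighborhood of $\sect$ can blow up along a sequence of perfectly good irrational directions. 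Naive compactness of the unit circle in $V$ therefore does not close the argument.

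The paper's proof resolves exactly this by replacing compactness of directions with weak-$*$ compactness of measures (the argument of \cite[Proof of Thm.~1.3]{SW}). Assuming failure, one gets segments of length $L_j\to\infty$ in directions $\bm w_j\to\bm w$ staying $\vre$-far from $Y$; the normalized length measures on the corresponding tubes push forward to probability measures $\mu_j$ on $\T^3$ whose weak-$*$ limit $\mu$ is invariant under $\mathrm{span}(\bm w)$. The classification of such invariant ergodic measures on $\T^3$ says each ergodic component is Haar measure on a rational subtorus; no-rational-lines rules out the $1$-dimensional case, and unavoidability guarantees $\sect$ meets every $2$- or $3$-dimensional one. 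A bump function $g$ supported on a small neighborhood of $\sect$ and positive on $\sect$ then gives $\int g\,d\mu>0$ by ergodic decomposition, while $\int g\,d\mu_j=0$ for every $j$ by construction --- a contradiction. This measure-theoretic compactness is what makes the uniformity rigorous; the ``hitting time is uniformly controlled'' statement you wanted is precisely what the contradiction delivers, but it cannot be obtained by continuity of hitting times alone. It also explains why the result is non-effective (no bound on $v(\vre)$), which you correctly anticipated.
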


See Figure \ref{figure:forest} for an illustration (provided by the
anonymous referee of this paper) of a visit set to a piecewise linear
unavoidable section for the flow induced by a random plane $V$, on
$\mathbb{T}^3$. 
\begin{figure}[ht!]
	\begin{center}
		\input{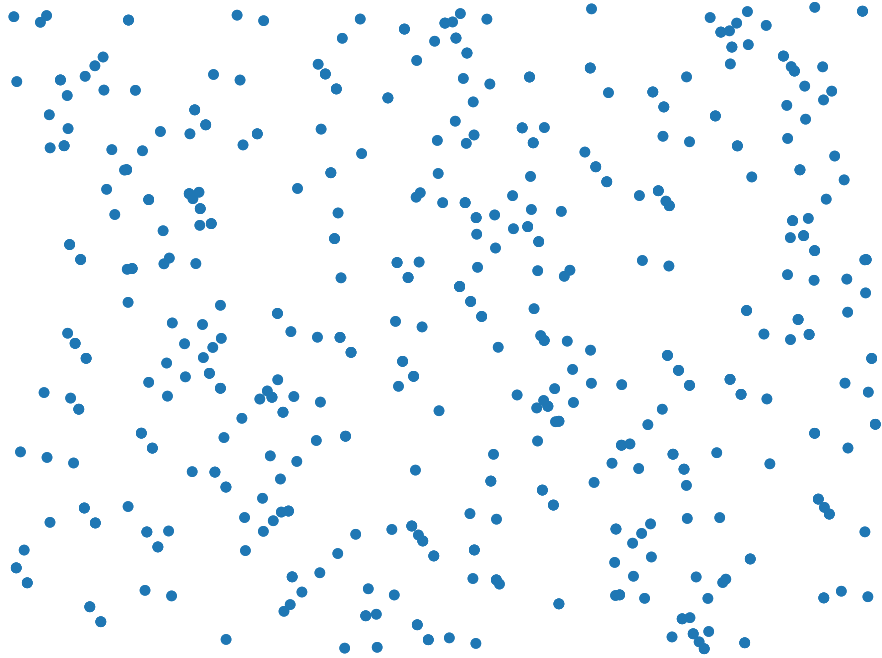}
		\caption{A visit set to a piecewise linear
                  unavoidable section.}
		\label{figure:forest}
	\end{center}
\end{figure} 

\section{Cut-and-project sets are not dense
  forests}\label{sec:CP_are_not_DF}  
In this section we will prove Theorem \ref{thm:CP_are_not_DF}. 
We will need the following well-known fact (see e.g.  \cite[Cor.,
p. 25]{Cassels}):  
\begin{prop}\label{prop: size of q}
	Let $\bm{q} \in \Z^N$ and $Q =\bm{q}^\perp$. Then 
	\begin{equation}\label{eq: 3.1}
	\norm{\bm{q}} \ge \Vol\left(Q / Q \cap \Z^N \right).
	\end{equation}   
	Moreover, if $\bm{q}$ is primitive (i.e. the gcd of its coordinates
        is equal to 1), then we have equality in \eqref{eq: 3.1}.
\end{prop}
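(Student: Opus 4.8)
The plan is to first reduce to the case where $\bm{q}$ is primitive, and then compute $\Vol(Q/Q\cap\Z^N)$ exactly by exhibiting an explicit fundamental domain for $\Z^N$ and integrating by Fubini along the direction of $\bm{q}$.

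For the reduction, write $\bm{q} = d\,\bm{q}_0$, where $d$ is the gcd of the coordinates of $\bm{q}$ and $\bm{q}_0 \in \Z^N$ is primitive. Since $\bm{q}^\perp = \bm{q}_0^\perp$, the hyperplane $Q$ and the lattice $Q \cap \Z^N$ depend only on $\bm{q}_0$, so it suffices to prove that $\Vol(Q/Q\cap\Z^N) = \norm{\bm{q}_0}$; then $\norm{\bm{q}} = d\,\norm{\bm{q}_0} \geq \norm{\bm{q}_0}$, with equality precisely when $d = 1$, i.e.~when $\bm{q}$ is primitive. Note also that $Q$ is a rational hyperplane (it is the solution set of the single rational equation $\bm{q}\cdot\bm{x} = 0$), so $Q \cap \Z^N$ is indeed a lattice of full rank $N-1$ in $Q$, and $\Vol(Q/Q\cap\Z^N)$ is the Euclidean $(N-1)$-volume of a fundamental domain $F_Q \subset Q$ for $Q\cap\Z^N$.

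Now assume $\bm{q}$ primitive. By B\'ezout there is $\bm{v} \in \Z^N$ with $\bm{v}\cdot\bm{q} = 1$. I claim $\Z^N = (Q\cap\Z^N) \oplus \Z\bm{v}$ as a direct sum of subgroups: given $\bm{x} \in \Z^N$, the vectors $\bm{x} - (\bm{x}\cdot\bm{q})\bm{v}$ and $(\bm{x}\cdot\bm{q})\bm{v}$ are both integral, their sum is $\bm{x}$, the first has zero inner product with $\bm{q}$ (hence lies in $Q\cap\Z^N$), and the intersection $(Q\cap\Z^N)\cap\Z\bm{v}$ is trivial since $k\bm{v}\in Q$ forces $k = k(\bm{v}\cdot\bm{q}) = 0$. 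Consequently $F \df F_Q + [0,1)\bm{v}$ is a fundamental domain for $\Z^N$ acting on $\R^N$. Slicing $F$ by the hyperplanes parallel to $Q$, each slice is a translate of $F_Q$ of $(N-1)$-volume $\Vol(Q/Q\cap\Z^N)$, and as the parameter of $[0,1)\bm{v}$ runs over $[0,1)$ these slices sweep out an interval, transverse to $Q$, of length equal to the distance from $\bm{v}$ to $Q$, namely $\norm{\pi^\perp(\bm{v})} = \absolute{\bm{v}\cdot\bm{q}}/\norm{\bm{q}} = 1/\norm{\bm{q}}$. By Fubini,
\[
1 = \covol(\Z^N) = \Vol(F) = \Vol\left(Q/Q\cap\Z^N\right)\cdot \frac{1}{\norm{\bm{q}}},
\]
which gives $\Vol(Q/Q\cap\Z^N) = \norm{\bm{q}}$, as desired.

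There is no serious obstacle here; the statement is elementary. The only points requiring a little care are the direct-sum decomposition $\Z^N = (Q\cap\Z^N) \oplus \Z\bm{v}$ in the primitive case (which makes the Fubini slicing legitimate) and the bookkeeping identifying the intrinsic subtorus volume $\Vol(Q/Q\cap\Z^N)$ with the Euclidean $(N-1)$-volume of $F_Q$. One could alternatively package the covolume identity above as a special case of the multiplicativity of covolumes in the tower $Q\cap\Z^N \subset \Z^N$ together with the image lattice $\pi^\perp(\Z^N) \subset \R\bm{q}$, but the explicit fundamental domain argument is self-contained and just as short.
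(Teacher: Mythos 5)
Your proof is correct. The paper does not actually supply a proof of this proposition; it simply cites \cite[Cor., p. 25]{Cassels}, so there is no argument in the text to compare against. Your route is the natural elementary one: reduce to the primitive case by factoring out the gcd (legitimate, since $Q$ and $Q\cap\Z^N$ depend only on the line $\R\bm{q}$), use B\'ezout to produce $\bm{v}\in\Z^N$ with $\bm{v}\cdot\bm{q}=1$, verify the direct-sum decomposition $\Z^N=(Q\cap\Z^N)\oplus\Z\bm{v}$, and then compute $\covol(\Z^N)=1$ by Fubini as the product of the slice volume $\Vol(Q/Q\cap\Z^N)$ and the height $\lVert P_{Q^\perp}(\bm{v})\rVert = 1/\lVert\bm{q}\rVert$. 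All steps check out: the decomposition $\bm{x}=(\bm{x}-(\bm{x}\cdot\bm{q})\bm{v})+(\bm{x}\cdot\bm{q})\bm{v}$ is correct, the triviality of the intersection follows from $\bm{v}\cdot\bm{q}=1$, and the Jacobian of $(f,t)\mapsto f+t\bm{v}$ relative to $Q\oplus Q^\perp$ is exactly $\lVert P_{Q^\perp}(\bm{v})\rVert$. An alternative, which the paper hints at in notation (the dual lattice and the identity $\covol(\Lambda)\covol(\Lambda^*)=1$), would be to extend a primitive $\bm{q}$ to a $\Z$-basis of $\Z^N$, compute the covolume of the orthogonal projection $P_Q(\Z^N)$ by the same Fubini slicing, and then pass to $Q\cap\Z^N$ via duality in $Q$; your argument avoids the duality step and is more self-contained. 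One trivial remark: as stated the proposition (and your proof) implicitly assume $\bm{q}\neq\bm{0}$, since for $\bm{q}=\bm{0}$ one has $Q=\R^N$ and $\Vol(Q/Q\cap\Z^N)=1>0=\lVert\bm{q}\rVert$; this is harmless in context.
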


\ignore{------------------------------------------------------------------------------------------------------------------------
	\noindent
	\underline{Proof :} Recall that the \emph{dual lattice} for a
        lattice $\Lambda$ in a linear space $V$ is  
		\[\Lambda^* = \{x\in V: \forall y\in\Lambda,  \inpro{x}{y} \in \Z\}. \]
		Moreover, one can show that $\covol(\Lambda)\cdot
                \covol(\Lambda^*) = 1$ for any lattice $\Lambda$ (see
                e.g. \cite[p.24]{Cassels}). Let $P_Q:\R^N\to Q$ denote
                the orthogonal projection on $Q$. The set $\Gamma \df
                P_Q(\Z^N)$ is a lattice in $Q$. Let $v_2,\ldots,v_n
                \in \Z^N$ be so that $\{q,v_2,\ldots,v_n\}$ is a basis
                to $\Z^N$ and let $D$ be the parallelepiped spanned by
                them. Clearly $\Vol(D) = 1$. On the other hand, this
                volume can be computed by Fubini's theorem as
                $\norm{q}\cdot \Vol(Q/\Gamma)$. Hence $\Vol(Q/\Gamma)
                = 1/\norm{q}$. Finally note that the lattice
                $\Gamma^*$ (dual in $Q$) is equal to $Q\cap\Z^N$ (this
                is an easy exercise in algebra). Than by the above,
                where the coVol is taken in $Q$,  
		\[\frac{1}{\norm{q}} = \covol(\Gamma) = \frac{1}{\covol(\Gamma^*)} = \frac{1}{\covol(Q\cap\Z^N)}. \]      
	\underline{Proof if one of the coordinates of $q$ is $\pm 1$:} 
		Denote the coordinates of $q$ by $a_1,\ldots,a_n$, and WLOG $a_1=\pm 1$. Consider the matrix 
		\[ A\df \begin{pmatrix}
		a_1   & a_2    & a_3   & \cdots & a_n   \\
		a_2   & -a_1   & 0     & \cdots & 0     \\
		a_3   & 0      & -a_1  & \ddots & \vdots\\
		\vdots& \vdots & \ddots& \ddots & 0     \\
		a_n   & 0      & \cdots& 0      & -a_1
		\end{pmatrix}\]
		One easily show by induction that $|\det(A)|=\norm{q}^2$. On the other hand, since columns $2,\ldots,n$ are perpendicular to $q$, and primitive (since $a_1=1$), they span the lattice $Q\cap\Z^N$ in $Q$. Then by simple geometry $|\det(A)| = \norm{q}\cdot \Vol(Q/Q \cap \Z^N)$. \\ \\
	\underline{Almost Proof:}
		Denote by $S=\Vol(Q / Q \cap \Z^N)$, let $v_2,\ldots,v_N\in\Z^N$ be a basis for $Q\cap\Z^N$, and let $B = [q|v_2|\cdots|v_n]\in M_n(\Z)$. Clearly  
		\begin{equation}\label{eq:parallelograms_volumes}	
		|\det(B)| = \norm{q}\cdot S,	
		\end{equation}
		 Denote by $u_j = (-1)^{1+j}\det(B|_{1j})$, where $B|_{1j}$ is the minor of $B$ that is obtained by removing the $1$st row and $j$'th column, and let $u\in\Z^N$ be the vector whose coordinates are the $u_j$'s. Then by definition 
		\begin{equation}\label{eq:u}
		\det(B) = \inpro{q}{u}. 
		\end{equation}
		Note that $q=c\cdot u$ for some $c\in\R$. Indeed, for any $i\in\{2,\ldots,n\}$ we have $0 = \det([v_i|v_2|\cdots|v_n]) = \inpro{v_i}{u}$, so $u$ is orthogonal to $Q$. So by Cauchy-Schwarz, and (\ref{eq:parallelograms_volumes}) and (\ref{eq:u}) we have 
		\begin{equation}\label{eq:u_Cauchy-Schwarz}
		\norm{q} \cdot S = |\det(B)| = |\inpro{q}{u}| = \norm{q} \cdot \norm{u},
		\end{equation}
		and hence $S=\norm{u}$. Also by $q=c\cdot u$, since $q, u \in\Z^N$ we have $c\in\Q$. Write $c=\frac{r}{s}$ with $\gcd(r,s)=1$, then $s$ divides $u_j$, and $r$ divides $q_j$ for every $j\in\{1,\ldots,n\}$.\\
		.....................		
}

\begin{lem}\label{lem:CP_not_DF-main_idea}
	Let $k,N\in\N, 1 \le k<N$, and let $U \subset \R^N$ be a
        $k$-dimensional subspace. Then for every bounded set
        $\window\subset U$ there exists an $(N-1)$-dimensional rational
        subspace $Q\subset\R^N$, and some $\bm{b} \in \R^N$, such that
        $\pi(\window) \cap \pi(Q+\bm{b}) = \varnothing$.    
\end{lem}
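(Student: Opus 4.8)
The plan is to take $Q=\bm q^\perp$ for a suitable nonzero $\bm q\in\Z^N$ and to describe the cosets of $\pi(Q)$ in $\T^N$ by a single linear functional. Note first that for any $\bm q\in\Z^N\setminus\{\bm 0\}$ the subspace $Q=\bm q^\perp$ is rational (it is cut out by the equation $\bm q\cdot\bm x=0$, which has integer coefficients) and has dimension $N-1$, and that the map $F\colon\T^N\to\R/\Z$, $F(\pi(\bm w))=\bm q\cdot\bm w\bmod 1$, is well defined because $\bm q\cdot\Z^N\subseteq\Z$. For every $\bm b\in\R^N$ one has $\pi(Q+\bm b)\subseteq F^{-1}\big(\bm q\cdot\bm b\bmod 1\big)$, since $\bm q\cdot(\bm y+\bm b)=\bm q\cdot\bm b$ for $\bm y\in Q$. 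Hence, if $\bm q$ can be chosen so that $F(\pi(\window))=\{\bm q\cdot\bm x\bmod 1:\bm x\in\window\}$ is a proper subset of $\R/\Z$, then picking $t\in\R/\Z$ outside it and $\bm b$ with $\bm q\cdot\bm b\equiv t\pmod 1$ (for instance $\bm b=(t/\|\bm q\|^2)\bm q$) gives $\pi(\window)\cap\pi(Q+\bm b)\subseteq\pi(\window)\cap F^{-1}(t)=\varnothing$, as required. (If $\window=\varnothing$ the lemma is trivial, so assume $\window\neq\varnothing$.)

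So the task reduces to choosing $\bm q$. Let $\pi_U\colon\R^N\to U$ be the orthogonal projection and let $D<\infty$ be the diameter of the bounded set $\window$. For $\bm x,\bm y\in\window\subseteq U$ we have $\bm q\cdot(\bm x-\bm y)=\pi_U(\bm q)\cdot(\bm x-\bm y)$, so $|\bm q\cdot(\bm x-\bm y)|\le\|\pi_U(\bm q)\|\,D$. Consequently, as soon as $\|\pi_U(\bm q)\|\,D<1$ (which is automatic when $D=0$), the set $\{\bm q\cdot\bm x:\bm x\in\window\}\subseteq\R$ has diameter $<1$, so its reduction modulo $1$ is a proper subset of $\R/\Z$. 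Thus it suffices to find a nonzero $\bm q\in\Z^N$ with $\|\pi_U(\bm q)\|<1/D$.

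To produce such a $\bm q$ I would split into cases according to whether the subgroup $\Gamma=\pi_U(\Z^N)\subseteq U$ is discrete. If $\Gamma$ is not discrete, then $\bm 0$ is an accumulation point of $\Gamma$ and there exist nonzero $\bm q\in\Z^N$ with $\|\pi_U(\bm q)\|$ arbitrarily small. If $\Gamma$ is discrete, it is a free abelian group of rank $r\le\dim U=k$; since $\Z^N/(\Z^N\cap U^\perp)\cong\Gamma$, the kernel $\Z^N\cap U^\perp$ of $\pi_U|_{\Z^N}$ has rank $N-r\ge N-k\ge 1$, so it contains a nonzero vector $\bm q$, which then satisfies $\pi_U(\bm q)=\bm 0$. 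Either way the required $\bm q$ exists; this is the step that uses the hypothesis $k<N$ in an essential way (for $k=N$ one would have $\Z^N\cap U^\perp=\{\bm 0\}$ and $\Gamma=\Z^N$ discrete, with a positive lower bound on the lengths of its nonzero vectors), and it is the main obstacle — everything else, namely the coset description and the diameter estimate, is routine. One can also bypass the dichotomy via Minkowski's convex body theorem applied to the long symmetric slab $\{\bm x:\|\pi_U(\bm x)\|\le\varepsilon,\ \|\bm x-\pi_U(\bm x)\|\le T\}$, whose volume tends to infinity with $T$ precisely because $N-k\ge 1$. Finally, Proposition~\ref{prop: size of q}, while not logically needed here, makes the picture cleaner: for primitive $\bm q$ the quotient circle $\T^N/\pi(\bm q^\perp)$ has circumference $1/\|\bm q\|$, so the requirement that $\pi(\window)$ avoid a fiber becomes, after rescaling, the statement that $\bm q\cdot\window$ have length less than $1$.
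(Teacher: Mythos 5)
Your argument is correct and rests on the same central idea as the paper's proof: take $Q=\bm q^\perp$ for a nonzero $\bm q\in\Z^N$, reduce the problem to finding $\bm q$ with $\|\pi_U(\bm q)\|$ small, and use $\dim U^\perp = N-k\ge 1$ to produce such a $\bm q$. The differences lie in the supporting steps and make your version leaner. The paper parametrizes cosets of $\pi(Q)$ by $Q^\perp/P_{Q^\perp}(\Z^N)$, lower-bounds the circumference of that circle via Proposition~\ref{prop: size of q}, replaces $\window$ by a cube and runs a volume estimate for $P_{Q^\perp}(\window)$; you instead describe each coset as a fiber of the character $\pi(\bm w)\mapsto\bm q\cdot\bm w\bmod 1$ and observe directly that $\bm q\cdot\window$ has diameter $<1$ once $\|\pi_U(\bm q)\|\,\mathrm{diam}(\window)<1$, bypassing both Proposition~\ref{prop: size of q} and the volume bookkeeping (you are right that the former is not logically needed here). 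To produce the small integer vector, the paper applies Minkowski's theorem to an infinite cylinder around a line in $U^\perp$; you offer a dichotomy on whether $\pi_U(\Z^N)$ is discrete, which avoids Minkowski altogether and relies only on the classification of discrete subgroups of Euclidean space, with the Minkowski-on-a-slab argument as a second route. Both of your routes use $k<N$ in the same essential place as the paper's cylinder argument. The proof as written is complete; the only facts invoked implicitly that a polished writeup would state are that a non-discrete subgroup of $\R^k$ has $\bm 0$ as an accumulation point of its nonzero elements, and the rank additivity in the short exact sequence $0\to\Z^N\cap U^\perp\to\Z^N\to\Gamma\to 0$.
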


\begin{proof}
Given $\window$ and $U$, it suffices to find a rational subspace $Q$ of
dimension $N-1$, and a coset of 
         $\pi(Q)$, $\widetilde{Q} \in \T^N/\pi(Q)$, such that 
	 $\pi(\window) \cap \widetilde{Q} = \varnothing$ (where the
         quotient denotes quotients of abelian
         groups). Indeed, if this happens then any $\bm{b} \in
         \pi^{-1}(\pi(\widetilde{Q}))$ will satisfy the required conclusion. 
	 Given a rational subspace $Q\subset\R^N$ of dimension $N-1$,
         let $P_{Q^\perp}:\R^N\to 
         Q^\perp$ denote the orthogonal projection on $Q^\perp$. Note
         that $\pi(Q)\cong Q/\left(Q\cap\Z^N\right)$ is 
         an $(N-1)$-dimensional sub-torus of $\T^N$, and that the
         space of cosets $\T^N/\pi(Q)$ is parameterized by
         $Q^\perp/P_{Q^\perp}(\Z^N)$. 
	 
If no such coset $\widetilde{Q} \in \T^N/\pi(Q)$ exists then 
$P_{Q^\perp}(\window)$ covers $Q^\perp/P_{Q^\perp}(\Z^N)$, and in
particular $\Vol(P_{Q^\perp}(\window)) \ge
\Vol(Q^\perp/P_{Q^\perp}(\Z^N))$. So it suffices to find a rational
subspace $Q$ with the property that 
	 \begin{equation}\label{eq:the_required_Q}
	 \Vol(P_{Q^\perp}(\window)) < \Vol(Q^\perp/P_{Q^\perp}(\Z^N))
         \stackrel{\equ{eq: reciprocal}}{=}
         \frac{1}{\Vol(Q/Q\cap\Z^N)}\cdotp 
	 \end{equation}

	 Let $\{\bm{u}_1,\ldots,\bm{u}_k\}$ be an orthonormal basis of $U$. By
         replacing $\window$ with a set that contains it, it suffices to
         find $Q$ satisfying (\ref{eq:the_required_Q}) where 
	 \begin{equation}\label{eq:S_is_a_cube}
	 \window = \left\{\sum_{i=1}^k a_i \bm{u}_i : |a_i|\le t \right\},
	 \end{equation}  
	 for some $t>0$. We will look for $Q=\mathrm{span} \, \{\bm{q}\}^\perp$, where
         $\bm{q}\in\Z^N$. By Proposition \ref{prop: size of q},
         it would suffice to find $\bm{q} \in\Z^N$ with
         $\Vol(P_{Q^\perp}(\window)) < \frac{1}{\norm{\bm{q}}}$. For $\window$ as
         in (\ref{eq:S_is_a_cube}), denoting by $\left(\bm{e}_1, \dots, \bm{e}_N\right)$ the standard basis, we have  
	 \[\Vol(P_{Q^\perp}(\window)) \le 2 \sqrt{k} \cdot t \cdot \max
         \left\{ \left| \bm{e}_i \cdot
             {\frac{\bm{q}}{\norm{\bm{q}}}} \right| :
           i\in\{1,\ldots,k\} \right \},\] 
	 and this expression is smaller than $1/\norm{\bm{q}}$ if
	 \begin{equation}\label{eq:Minkowski}
	 \max_{1\le i\le k} \left\{ \left|\bm{e} _i \cdot
             {\bm{q}}\right| \right\} < \delta, \ \ \text{ where }
         \delta \df \frac{1}{2\sqrt{k}t}\cdot 
	 \end{equation} 
Let $L$ be a line in $U^\perp$; then (\ref{eq:Minkowski}) clearly holds if $\bm{q}$ is
         a nonzero integer vector in the $\delta$-neighborhood of $L$; that is, in the set 
	 \[\mathcal{C} \df \left\{\bm{x} \in\R^N :
           \dist(\bm{x} , L) < \delta\right\}. \]
	 Since $\mathcal{C}$ is a convex centrally symmetric body of infinite
         volume, by Minkowski's convex body theorem (see
         e.g. \cite[p. 71]{Cassels}) such an integer vector $\bm{q}$
         exists.  
\end{proof}

\begin{proof}[Proof of Theorem \ref{thm:CP_are_not_DF}]
Replacing $\T^N$ if necessary with $\overline{V \cdot \bm{x}}$, we may assume
that $V$ is totally irrational. If $n = N$ then $Y$ is periodic, that
is $Y$ is a finite union of cosets of a lattice $\Lambda \subset
\R^n$, and
then we may take $Z$ to be parallel to an $(n-1)$-dimensional subspace
spanned by vectors in $\Lambda$. 

So we can assume that $n<N$ and $Y_{\sect,\bm{x}_0}$ is a $(n, N)$-toral dynamics set,
        associated with the action of a totally irrational
        $n$-dimensional space $V$ on the torus $\T^N$. Set
        $k=N-n$. Then, by assumption, $\sect = \pi(\window)$, where $\window$ is a compact subset of a
        $k$-dimensional affine space $U \subset \R^N$. Let $\bm{z}\in U
        \cap V$, and note that $Y_{\sect,\bm{x}_0}-\bm{z} =
        Y_{\pi(\window-\bm{z}),\bm{x}_0}$. Then it suffices to show that
        $Y_{\pi(\window-\bm{z}),\bm{x}_0}$ is not a dense forest. Note that
        $\window-\bm{z} \subset U-\bm{z}$ which is a $k$-dimensional subspace of
        $\R^N$. By Lemma \ref{lem:CP_not_DF-main_idea} there
        exists an $(N-1)$-dimensional rational subspace
        $Q\subset\R^N$, and a coset $\widetilde{Q}$ of it, such that
        $\pi(\window - \bm{z}) \cap \pi(\widetilde{Q}) = \varnothing$. Note
        that $Z \df V \cap \widetilde{Q}$ is a
        $(n-1)$-dimensional affine subspace of $V$ and of
        $\widetilde{Q}$. Since $\pi(\window - \bm{z}) \cap \pi(\widetilde{Q})
        = \varnothing$, and the sets $\pi(\window - \bm{z})$ and
        $\pi(\widetilde{Q})$ are closed in $\T^N$, there exists some
        $\varepsilon>0$ such that $d_{\T^N}(\pi(\window -
        \bm{z}),\pi(\widetilde{Q})) > \varepsilon$. Then we also have
        $d(\pi(\window - \bm{z}),\pi(Z)) > \varepsilon$. That is, any point
        which is $\vre$-close to $Z$ misses $Y_{\sect, \bm{x}_0}-\bm{z}$, and
        this proves the assertion.  
\end{proof}


\section{An explicit uniformly discrete dense forest using toral flows}
\label{sec:unions_of_CP_are_DF}
In this section we will prove Theorems \ref{thm: good sections exist}
and 
\ref{thm:UDDF_as_a_union_of_CP}. 

In order to see that a piecewise linearly unavoidable section exists,
we refer the reader to Figure \ref{figure:Section_example}. 

\begin{figure}[h!]
	\begin{center}
		\includegraphics[scale=0.9]{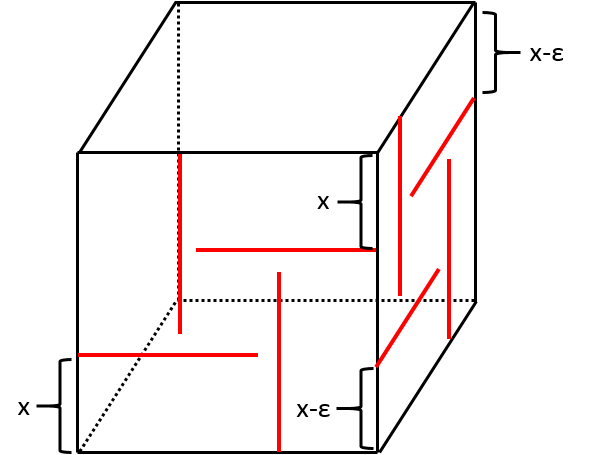}
		\caption{A piecewise linearly unavoidable section in
                  $\T^3$: any 2-torus intersects at least one of the 
                  faces of the cube in a loop, and thus intersects
                  the section. }
		\label{figure:Section_example}
	\end{center}
\end{figure} 

We also sketch an alternative existence proof for linearly unavoidable sections,
which uses toral dynamics and can be generalized to higher dimensions,
i.e. to the case $N = n+1$ for arbitrary $n \geq 2$. 

\begin{proof}[Sketch of another proof of Theorem \ref{thm: good
      sections exist}]
Let $\{T_j : j \in \N\}$ be an enumeration of the 2-dimensional affine
tori passing through the origin in $\T^3$. From 
Proposition \ref{prop: size of q} we have $\Vol(T_j) \to \infty$ and
from \equ{eq: reciprocal}, $\Vol\left(\T^N/T_j \right)\to 0$. 
Let
 $J_1, J_2 ,J_3\subset\R^3$ be closed line segments, in
linearly independent directions, such that the images in  $\sect=\bigcup \pi(J_i)$ are
disjoint. Since the directions are linearly independent, there is a
uniform lower bound on the angle that each $T_j$ makes with at least
one of the $J_i$. Therefore when projecting onto $\T^3/T_j$, for all large
enough $j$, at least one of the $J_i$ projects onto the quotient
$\T^3/T_j$. Thus for
all sufficiently large $j$, and every coset $T'_j$ of $T_j$, we have
$T'_j\cap \sect \neq \varnothing$.

\ignore{
Let $\eta$ be the minimum of the lengths of the $J_i, i=1,2,3$, 
and let
$\alpha>0$ denote the minimal angle between any two of the segments
$J_i$. For a sub-torus $T = \pi(V)$ and a line segment $J$, denote
by $\angle(T,J)$ the angle between $V$ and $J$. Fix $i \in \{1,2,3\}$. Then $T' \cap
\pi(J_i) \neq \varnothing$ for every coset 
of $T'$, if and only if the projection of $\pi(J_i)$ onto $\T^3/T$ is
surjective. By considering the 
projection to the orthogonal complement of $T$, we see that this is
implied by 
\begin{equation}\label{eq:in_order_to_intersect_all_cosets}
\Vol_{\T^3/T}\left(\T^3/T \right) < \eta\cdot\sin(\angle(T,J_i)).
\end{equation}
Since $T_j\xrightarrow{j\to\infty}\T^3$, Lemma \ref{lem: standard
  compactness} implies that $\Vol(T_j) \to 
\infty$ and hence by \equ{eq: reciprocal}, 
$\Vol\left(\T^3/T_j \right)\xrightarrow{j\to\infty} 0$. On the other hand, since
the $T_j$'s are 2-dimensional, and the directions of the line
segments $J_i$ are linearly independent, for every $j$ we have
$\max_i\angle(T_j,J_i) \ge \alpha/2$.
This implies that there is $j_0$ such that for every $j\ge j_0$ the
condition in (\ref{eq:in_order_to_intersect_all_cosets}) holds for any
coset of $T=T_j$, and (at least) one of the segments 
$J_i$. 
\end{proof}

\begin{proof}[Proof of Theorem \ref{thm:UDDF_as_a_union_of_CP}]
Assertion (i) is a consequence of Lemma
\ref{lem:existense_of_PLU-sections}. Let $T_j$, $j=1,2,\ldots$, be an
enumeration of all the rational 2-dimensional sub-tori of $\T^3$
that contain the origin. By Lemma \ref{lem: standard compactness},
$T_j\xrightarrow{j\to\infty}\T^3$. Let $J_1, J_2, J_3$ be 3 line segments
in $\R^3$ in linearly independent directions, such that the projections
$\pi(J_i)$ are disjoint, and let $\sect' = \bigcup \pi(J_i)$. By Lemma
\ref{lem:existense_of_PLU-sections}, there 
exists $j_0$ such that for every $j\ge j_0$, and every coset $T'_j$
of $T_j$, we have $T'_j\cap \pi(\sect') \neq \varnothing$. 
}
Now by
adding finitely many line segments to $\sect$, and keeping the
property $\pi(J_{i_1}) \cap \pi(J_{i_2}) = \varnothing$, we obtain a
piecewise linear unavoidable section.
\end{proof}

\begin{proof}[Proof of Theorem \ref{thm:UDDF_as_a_union_of_CP}]
This is very close to the argument of  
\cite[Proof of Thm. 1.3]{SW}. First note that uniform discreteness of $Y$ follows
from the fact that the segments comprising 
$\sect$ are closed and disjoint, and the transversality assumption. We prove that $Y$ is a
dense forest by contradiction. If not, then there
exists some $\varepsilon>0$, unit vectors $\bm{w}_j \in V$, and
$L_j \to \infty$ such that the line segments $\ell_j \df \{
\bm{x}_j+t\cdot \bm{w}_j : t \in [0,L_j] \}$ satisfy 
$\dist(Y,\ell_j)\ge\varepsilon$ for all $j$. Denote 
\[K_j \df \left\{\bm{v}\in V: \dist(\bm{v},\ell_j) \le \frac{\varepsilon}{3} \right\},\]
and define a sequence of Borel probability measures $\mu_j$ on $\T^3$ by
\[\forall f \in C(\T^3),\quad  \int_{\T^3} f d\mu_j \df
\frac{1}{\Vol(K_j)} \int_{K_j} f(\bm{v} \cdot \bm{x}_0) dv,\]  
where the integral on the right hand side is with respect to the
Euclidean volume on $V$. By passing to a subsequence we may assume
that $\bm{w}_j\xrightarrow{j\to\infty} \bm{w}$, and that $\mu_j
\xrightarrow{weak-*} \mu$. Since $\bm{w}_j$ is the direction of the long
axis of the cylinder $K_j$, and since the stabilizer of a measure is a
group, it follows that the measure $\mu$ is
invariant under $H \df \mathrm{span} (\bm{w})$. 
By \cite[Chap. 3]{CFS}, every Borel
probability measure on $\T^3$, invariant and ergodic under $H$, is the
Haar measure on some rational 
torus $T\subset\T^3$. Note that such a $T$ cannot be
$1$-dimensional. Indeed if such a $T$ is $1$-dimensional then $T = \pi(H)$,
hence $\bm{w}$ is a rational direction in the physical space $V$,
contradicting the assumption that $V$ does not contain rational
lines. 

Let $g\in C(\T^3)$ be a bump function that is positive on $\sect$
and supported on 
the $\varepsilon / 3$
neighborhood of $\sect$. Recall that since $\sect$ is a piecewise
linear unavoidable section, $\sect$ intersects every
$2$-dimensional rational sub-torus of $\T^3$, and in particular
$\sect \cap T \neq \varnothing$, for every $T$ as above. This implies that $\int_{\T^3}g d\nu
> 0$ for any ergodic $H$-invariant measure $\nu$, and hence by ergodic
decomposition, $\int_{\T^3}g d\mu
> 0$. On the other hand, for every $j$ and for every $v \in K_j$, by
definition $\dist(v, Y)\ge \frac{2\varepsilon}{3}$, thus
$\dist_{\T^3}(\bm{v} \cdot \bm{x}_0,\sect) \ge \frac{2\varepsilon}{3}$ and
$\bm{v} \cdot \bm{x}_0$ misses the support of $g$. This implies $g(\bm{v} \cdot \bm{x}_0)
= 0$, and hence 
$\int_{\T^3} g d\mu_j = 0$ for every $j$, a contradiction to $\mu_j
\xrightarrow{weak-*} \mu$. 

Let $I_1,\ldots,I_\ell$ be 
     line segments whose projections define $\sect$. Then each $\pi(I_i)$ is a
    linear section and hence the set $Y =
    \bigcup_{j=1}^{\ell} Y_{\pi(I_i), \bm{x}_0}$ is a finite union of
    cut-and-project sets as required. 
\end{proof}


\section{Finite unions of translated 
  lattices and uniformly Diophantine
  sets of vectors}\label{sec:peres_construction}\label{sec: finite unions of
  lattices} 
We now move to results concerning finite unions of translated lattices. 

\subsection{More notation.} The following notation will be used
in the rest of the paper. 
Given two expressions $X$ and $Y$, we will use both of the 
notations $X\ll Y$  and $X = O(Y)$ to mean that $X$ and $Y$ are
depending on some variables and there exists a
constant $c>0$ (called the {\em implicit constant}), independent of
these parameters, such that 
$X\le cY$. 

\begin{itemize}
\item
Throughout we will have two dimensions $n$ and $d$ linked by the
relation
$$
n=d+1 \geq 2.
$$
\item
The coordinates of $\bm{x} \in \R^n$ will be denoted by $(x_1, \ldots,
x_n)$. 
\item $\|\cdot \|_{\infty}$ and $\|\cdot \|_{2}$ will denote respectively the
  sup-norm and Euclidean norm, $B_{\infty}(\bm{x}, r)$ and
  $B_2(\bm{x}, r)$ will denote the respective open  balls. When
  making a statement which does not depend on the choice of norm we
  will simply write $\| \cdot \|$ and $B(\bm{x}, r)$ unless these notations are specifically defined otherwise.
\item   $\bm{x\cdot y}$ will denote the usual scalar product between
  the vectors $\bm{x}, \, \bm{y} \in \R^n$.
\item $\pi : \R^n \to \T^n = \R^n/\Z^n$ is the natural projection. 
\item  $\langle\bm{x}\rangle_{\Z^n}$ will denote the distance from
  $\bm{x}\in\R^n$ to $\Z^n$ with respect to the sup-norm. Thus
  $d(\pi(\bm{x}), \pi(\bm{y})) = \langle 
  \bm{x} - \bm{y} \rangle_{\Z^n}$ is the metric on $\T^n$ induced by
  $\| \cdot \|_\infty$.  For $n=1$ we
  will abbreviate this as $\langle x \rangle_{\Z} = \langle x
  \rangle$. 


\item
A real $n\times m$ matrix $A$ (where $n,m\ge 1$) will be
identified with a vector in $\R^{n\times m}$ by concatening its
successive columns. Its transpose will be the $m\times n$ matrix
denoted by $A^T$. 
\end{itemize}

\subsection{On Peres' construction of dense forests}\label{subsec:
  Peres construction}
We recap Peres' explicit construction of a discrete forest of bounded
density, given in \cite{Bishop}. 
Let  $\varphi = (1+\sqrt{5})/2$ be the golden ratio and
let 
$$\mathfrak{F}_1(\varphi)\df \Z^2 \cup \begin{pmatrix} 1 & 0 \\
  \varphi & 1\end{pmatrix}\cdot \Z^2.$$ Thus,
$\mathfrak{F}_1(\varphi)$ is the union of the standard integer lattice
in $\R^2$ with an irrational shear of it\footnote{In fact, in
  \cite{Bishop}, the slightly different set $\left( \left(\begin{matrix} 1/2 \\ 0 \end{matrix}\right) + \Z^2   \right) \cup \left(\begin{matrix} 1 &
    0 \\ \varphi & 1\end{matrix} \right) \cdot \Z^2$ was used in place of $\mathfrak{F}_1(\varphi)$.}.  

 Applying Dirichlet's theorem in Diophantine approximation,
 Peres proved that $\mathfrak{F}_1(\varphi)$ is a dense forest when
 restricting to line segments with slope bounded in
 absolute value by 1 (that is, to those  line  segments ``close to
 horizontal''). His argument ensured a visibility function of
 $O\left(\varepsilon^{-4}\right)$.  
Set  $$\mathfrak{F}_2(\varphi)\df \Z^2 \cup \begin{pmatrix}  \varphi &
  1 \\ 1 & 0\end{pmatrix}\cdot \Z^2$$ and note that
$\mathfrak{F}_2(\varphi)$ is obtained by permuting the role of the
coordinate axes in the definition of $\mathfrak{F}_1(\varphi)$.  This
implies a similar bound for line segments with slope bigger than 1 in absolute
value (that is, any line segment ``close to vertical''). 
%
Thus, defining $$\mathfrak{F}(\varphi)\df
\mathfrak{F}_1(\varphi)\cup\mathfrak{F}_2(\varphi)$$ (which is the
union of three lattices), we have a dense forest with
visibility  function satisfying $v(\vre) = O\left(\varepsilon^{-4}\right).$ 
 See Figures~\ref{foretasyfigure1} and \ref{foretasyfigure}, which represent respectively the  sets of points $\mathfrak{F}_1(\varphi)$ and $\mathfrak{F}_2(\varphi)$. Their union is the dense forest $\mathfrak{F}(\varphi)$ depicted in Figure~\ref{fig: Peres construction}. 

\begin{minipage}{0.4\textwidth}

\begin{tikzpicture}[scale=0.275]
\foreach \x in {-10,...,10}{
\foreach \y in {-10,...,10}{
{\fill (\x,\y) circle (0.1) ;}}}

\foreach \x in {-10,...,10}{
\foreach \y in {-30,...,30}{
{\fill (\x,{max(-10,{min(1.618*\x+\y,10)})}) circle (0.1) ;}}}
\end{tikzpicture}

\captionsetup{margin={0pt,0\textwidth}}%

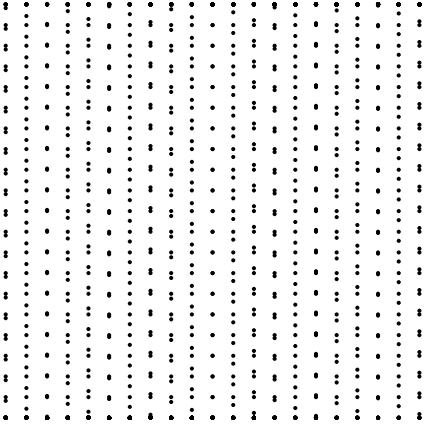
\captionof{figure}{\footnotesize  The $\vre$-thickening of the set $\mathfrak{F}_1(\varphi)$ represented above intersects line segments ``close to horizontal''.}
\label{foretasyfigure1}
\end{minipage}
\hspace{6ex}
\begin{minipage}{0.4\textwidth}
\vspace{8ex}
\begin{tikzpicture}[scale=0.275]
\foreach \x in {-10,...,10}{
\foreach \y in {-10,...,10}{
{\fill (\x,\y) circle (0.1) ;}}}

\foreach \x in {-10,...,10}{
\foreach \y in {-30,...,30}{
{\fill (-{max(-10,{min(1.618*\x+\y,10)})}, \x) circle (0.1) ;}}}
\end{tikzpicture}
\captionsetup{margin={0pt,0\textwidth}}%

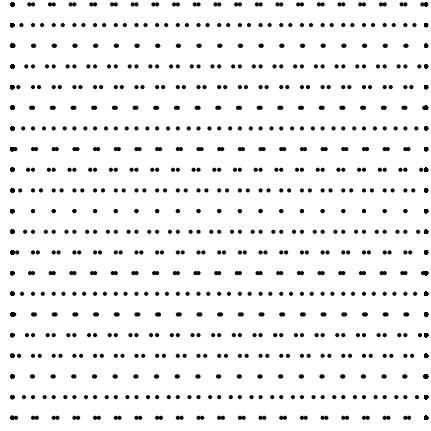
\captionof{figure}{\footnotesize  The $\vre$-thickening of the set
$\mathfrak{F}_2(\varphi)$  represented above intersects line segments ``close to
 vertical''. }\vspace{8ex}
\label{foretasyfigure}
\end{minipage}

%
%
%
%
%
%

\vspace{2ex}

The goal of this section is to generalize Peres' construction,
obtaining dense forests in any dimension which are almost fully
explicit (see Section~\ref{mainresultsec5} for details) and with
good visibility bounds. In particular we will 
improve the visibility bound in Peres' original planar forest.  

Let $J : \R^n \to \R^n$ be the linear transformation that acts by
permutating coordinates as follows:
\begin{equation}\label{eq: permutation matrix}
J \left(x_1, x_2, \ldots, x_{n-1},
    x_n\right)^T = \left(x_2, x_3, \ldots, x_n, x_1\right)^T.
\end{equation}

Given an integer $s\ge 2$, denote by
\begin{equation}\label{defthetas}
\bm{\Theta}_{s,d} = \left(\bm{\theta}_{1}, \, \dots
  \, , \bm{\theta}_{s}\right) 
\end{equation} 
an $s$-tuple of $d$-dimensional vectors.

Then define 
\begin{equation}\label{deff_1}
\mathfrak{F}_1\left(\bm{\Theta}_{s,d}\right)\, \df \,
\bigcup_{i=1}^{s} \begin{pmatrix}  1 & \bm{0}^T\\ \bm{\theta}_{i}  &
  I_{d}\end{pmatrix}\cdot \Z^n,  
\end{equation} 
where $I_{d}$ stands for the $d\times d$ identity matrix.
For $\ell=1, \dots, n$, let 
$\mathfrak{F}_\ell\left(\bm{\Theta}_{s,d}\right)$ denote the image of
$\mathfrak{F}_1\left(\bm{\Theta}_{s,d}\right)$ under $J^{\ell-1}$, i.e.  
\begin{equation}\label{defsubforest}
\mathfrak{F}_\ell \left(\bm{\Theta}_{s,d}\right) = J^{\ell-1} \left(
  \mathfrak{F}_1\left(\bm{\Theta}_{s,d}\right) \right),
\end{equation}
and 
let 
\begin{equation}\label{defforestfinal}
\mathfrak{F}\left(\bm{\Theta}_{s,d}\right) \df
\bigcup_{\ell=1}^{n}\mathfrak{F}_\ell\left(\bm{\Theta}_{s,d}\right). 
\end{equation}
Note that $\mathfrak{F}\left(\bm{\Theta}_{s,d}\right)$ is the union of
at most $ns$ lattices, and that Peres' construction is
$\mathfrak{F}\left(\bm{\Theta}_{2,1} \right)$ with $\bm{\Theta}_{2,1} =\left( 0, \varphi \right)$. 

\subsection{Visibility bounds for these
  forests.}\label{mainresultsec5}\label{sec: diophantine implies 
  visibility}
Recall that a vector $\bm{\theta}\in\R^d$ is said to be Diophantine of
type $\tau>0$ if there exists a constant $c(\bm{\theta})=c>0$ such
that $$\forall \bm{u}\in\Z^d\backslash\{\bm{0}\}, \quad \langle
\bm{u\cdot \theta} \rangle \ge
\frac{c}{\left\|\bm{u}\right\|^{\tau}}\cdotp$$ 

A multidimensional version of Dirichlet's theorem
(see~\cite[Theorem~VI, p.13]{Casselsbis}) implies that
necessarily $\tau\ge d$. The visibility bounds in the
forest~\eqref{defforestfinal} will depend on a strenghtening of this
concept: 

\begin{definition}\label{defudt}
Let $\Phi$ be a non-increasing function tending to zero at
infinity. An $s$-tuple of $d$ dimensional vectors
$\bm{\Theta}_{s,d}$, as in \eqref{defthetas}, is \emph{uniformly
  Diophantine of type $\Phi$} if for any $T\ge 1$ and any
$\bm{\xi}\in\R^d$, there exists $i\in\{ 1, \ldots, s\}$ such that
for all 
$\bm{u}\in\Z^d\backslash\{\bm{0}\}$ with sup-norm at most $T$, 
\begin{equation}\label{lowboundudt}
\left\langle \bm{u \cdot } \left(
    \bm{\xi}-\bm{\theta_i}\right)\right\rangle\; \ge \; \Phi(T). 
\end{equation}
 The set of 
 $\bm{\Theta}_{s,d}$ that are uniformly Diophantine of type $\Phi$
 will be denoted by $UDT_s^{d}(\Phi)$. Thus, $\bm{\Theta}_{s,d}\in
 UDT_s^{d}(\Phi)$ means that  
 $$\inf_{T\ge 1}\; \inf_{\bm{\xi}\in\R^d}\; \max_{1\le i \le s}\;
 \min_{\underset{\bm{u}\in\Z^d}{1\le \left\| \bm{u}
     \right\|_{\infty}\le T}}\left\{ \Phi(T)^{-1}
   \left\langle\bm{u \cdot }  \left(
       \bm{\xi}-\bm{\theta_i}\right)\right\rangle\right\}
 \; \ge \; 1.$$ 
 Also, given $\tau>0$, set 
 $$UDT_s^{d}(\tau)\; \df\; \bigcup_{c>0}UDT_s^d\left(x\mapsto
   cx^{-\tau} \right).$$ 
\end{definition}

It is easily seen that 
the set  $ UDT_s^{d}(\Phi)$ is translation invariant;
that is, for any $\bm{\alpha}\in\R^d$, 
$$ \left(\bm{\theta}_{1},  \,
  \dots \, , \bm{\theta}_{s}\right)\in UDT_s^{d}(\Phi) \ \iff \ 
\left(\bm{\theta}_{1}+\bm{\alpha}, \, \dots \, ,
  \bm{\theta}_{s}+\bm{\alpha}\right)\in UDT_s^{d}(\Phi).$$
In particular, from any uniformly Diophantine set of vectors of a given 
type, one can obtain another uniformly Diophantine  set of vectors of the same type such that one of the latter vectors takes any
predefined value. Also, if $UDT^d_s (\Phi)\neq \varnothing$, then
taking 
 $\bm{\xi}=\bm{0}$ in \eqref{lowboundudt} and using Dirichlet's
theorem, one sees that 
necessarily 
\begin{equation}\label{grothphi}
\Phi\left(T\right) \; = \; O\left(T^{-d} \right). 
\end{equation}

\begin{thm}\label{thmforestperes}
Assume that 
$\bm{\Theta}_{s,d} \in UDT_s^d(\Phi)$. Then the
set $\mathfrak{F}\left(\bm{\Theta}_{s,d}\right)$ constructed
in~\eqref{defforestfinal} is a dense forest in $\R^n$ with visibility
function sa\-tis\-fy\-ing 
\begin{equation}\label{boundvisibility}
v(\vre) = O\left( \left(\varepsilon^{d-1}\cdot
    \Phi\left(d\varepsilon^{-1}\right)^{-1}\right)^d\right). 
\end{equation}
\end{thm}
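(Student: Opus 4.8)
The plan is to follow Peres' strategy: for a long line segment, reduce the problem to the effective equidistribution of a single orbit $\{m(\bm{\xi}-\bm{\theta}_{i})\bmod\Z^{d}\}_{m}$ in $\T^{d}$, where the hypothesis $\bm{\Theta}_{s,d}\in UDT_{s}^{d}(\Phi)$ is precisely what supplies a good index $i$ together with a quantitative rate. The substantive ingredient, which I would isolate as Proposition~\ref{propreduc}, is an effective equidistribution statement of the following shape: if $\bm{\alpha}\in\R^{d}$ satisfies $\langle\bm{u}\cdot\bm{\alpha}\rangle\ge\delta$ for every $\bm{u}\in\Z^{d}$ with $0<\|\bm{u}\|_{\infty}\le T$, where $T\ge d\vre^{-1}$, then $\{m\bm{\alpha}\bmod\Z^{d}:0\le m\le M\}$ is $\vre$-dense in $\T^{d}$ (for the sup-norm metric) whenever $M\ge c_{d}\,\vre^{d(d-1)}\delta^{-d}$. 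For $d=1$ this is immediate from the three-distance theorem: the least continued-fraction denominator $q$ of $\alpha$ with $q>(2\vre)^{-1}$ already makes $\{m\alpha\}_{0\le m\le q}$ $\vre$-dense, while its predecessor $q'<(2\vre)^{-1}\le T$ forces $\delta\le\langle q'\alpha\rangle<1/q$, i.e. $q<\delta^{-1}$; for $d\ge2$ one argues through the successive minima of the unimodular lattice $\Z^{d}\times\{0\}+\Z\cdot(\bm{\alpha},1)\subset\R^{d+1}$, and this is where the real work lies.

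Granting this, I would first reduce to ``near-horizontal'' segments. Since the cyclic coordinate permutation $J$ is an isometry with $\mathfrak{F}_{\ell}(\bm{\Theta}_{s,d})=J^{\ell-1}\mathfrak{F}_{1}(\bm{\Theta}_{s,d})$ and $J$ acts transitively on the coordinate axes, it suffices to show that $\mathfrak{F}_{1}(\bm{\Theta}_{s,d})$ meets the $\vre$-neighbourhood of every segment whose unit direction has first coordinate $\ge n^{-1/2}$ in absolute value. Reversing the parametrisation if necessary, such a segment of length $\ell$ can be written as $S=\{(x_{1},\bm{h}_{0}+x_{1}\bm{\xi}):x_{1}\in[a,a+\Delta]\}$ with $\bm{h}_{0},\bm{\xi}\in\R^{d}$ arbitrary and $\Delta\ge\ell/\sqrt{n}$.

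Now the argument is bookkeeping. A point of the $i$-th lattice of $\mathfrak{F}_{1}(\bm{\Theta}_{s,d})$ has the form $(m,\,m\bm{\theta}_{i}+\bm{k})$ with $m\in\Z$, $\bm{k}\in\Z^{d}$, and for $m\in[a,a+\Delta]$, choosing $\bm{k}$ optimally, it lies at $\|\cdot\|_{\infty}$-distance $\langle m(\bm{\xi}-\bm{\theta}_{i})+\bm{h}_{0}\rangle_{\Z^{d}}$ from the point of $S$ above $x_{1}=m$. Put $T:=d\lceil\vre^{-1}\rceil$ and $\delta:=\Phi(T)$; since $\bm{\Theta}_{s,d}\in UDT_{s}^{d}(\Phi)$ there is $i\in\{1,\dots,s\}$ with $\langle\bm{u}\cdot\bm{\alpha}_{i}\rangle\ge\delta$ for all $\bm{u}\in\Z^{d}$, $0<\|\bm{u}\|_{\infty}\le T$, where $\bm{\alpha}_{i}:=\bm{\xi}-\bm{\theta}_{i}$. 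Applying the equidistribution input to $\bm{\alpha}_{i}$, once $M\ge c_{d}\vre^{d(d-1)}\delta^{-d}$ the orbit $\{m\bm{\alpha}_{i}\bmod\Z^{d}:0\le m\le M\}$ is $\vre$-dense, hence comes within $\vre$ of $-\lceil a\rceil\bm{\alpha}_{i}-\bm{h}_{0}\bmod\Z^{d}$; shifting the index by $\lceil a\rceil$ gives an integer $m\in[\lceil a\rceil,\lceil a\rceil+M]$ with $\langle m(\bm{\xi}-\bm{\theta}_{i})+\bm{h}_{0}\rangle_{\Z^{d}}\le\vre$, i.e. a point of $\mathfrak{F}_{1}(\bm{\Theta}_{s,d})$ within $\vre$ of $S$, provided $[a,a+\Delta]$ contains at least $M+1$ integers, which holds as soon as $\Delta\ge M+2$, i.e. $\ell\ge\sqrt{n}(M+2)$. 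Therefore one may take
\[
v(\vre)=\sqrt{n}\,\bigl(\lceil c_{d}\,\vre^{d(d-1)}\Phi(d\vre^{-1})^{-d}\rceil+2\bigr)=O\!\left(\bigl(\vre^{d-1}\Phi(d\vre^{-1})^{-1}\bigr)^{d}\right);
\]
this is a genuine visibility function because $\Phi(T)=O(T^{-d})$ (by \eqref{grothphi}) forces $v(\vre)\gg\vre^{-d}\to\infty$, consistently with the trivial lower bound \eqref{eq: trivial bound}, which it never beats.

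Everything above is routine; the genuine obstacle is the effective equidistribution estimate (Proposition~\ref{propreduc}). Its two delicate points are obtaining exactly the power $\vre^{d(d-1)}$ — needed to recover the improved planar bound $O(\vre^{-3})$ when $d=1$, and to degrade to the trivial rate $\vre^{-(n-1)}$ in the optimal Diophantine regime $\Phi(T)\asymp T^{-d}$ — and controlling the coupling between the coordinates of $\bm{\alpha}$, for which the one-dimensional three-distance argument must be replaced by a geometry-of-numbers analysis of the lattice $\Z^{d}\times\{0\}+\Z\cdot(\bm{\alpha},1)$ and its successive minima; this is carried out in \S\ref{sec: toral equidistribution}.
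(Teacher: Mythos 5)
Your proposal is correct and follows essentially the same route as the paper: you reduce via the cyclic symmetry $J$ to near-horizontal segments, fibre over integer hyperplanes to translate $\vre$-closeness into $\vre$-density of $\left(m(\bm{\xi}-\bm{\theta}_i)\right)_{0\le m\le M}$ in $\T^d$, and invoke exactly the contrapositive form of Proposition~\ref{propreduc} with $T\approx d\vre^{-1}$ and $\delta=\Phi(T)$ to obtain the threshold $M\gg\vre^{d(d-1)}\Phi(d\vre^{-1})^{-d}$. The only cosmetic difference is your use of $T=d\lceil\vre^{-1}\rceil$ where the paper takes the real parameter $T=d\vre^{-1}$ directly (allowed since Definition~\ref{defudt} permits real $T\ge 1$), which matches the displayed bound \eqref{boundvisibility} without needing any regularity of $\Phi$.
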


Theorem~\ref{thmforestperes} will be established in
\S\ref{redproof}. 
Note that as the bound on the uniformly Diophantine type
comes closer to the upper bound \eqref{grothphi}, the bound \eqref{boundvisibility}
on the visibility approaches the optimal \equ{eq: trivial bound}. 

A number $\theta \in \R$ is {\em badly approximable} if it is of Diophantine type
$\tau =1$. It is well-known that the golden ratio $\varphi$ is badly
approximable. 
It will be shown in \S\ref{udtmultilin} that any 
$\left(\alpha, \beta\right)^T\in\R^2$, where $\beta-\alpha$ is a
badly approximable number, belongs to the set
$UDT_2^1(3)$. Combined with Theorem \ref{thmforestperes}, this implies
that the visibility
bound in Peres' original forest can be improved from
$O\left(\varepsilon^{-4} \right)$ to $O\left( \varepsilon^{-3}\right)$. 

The property of being a uniformly Diophantine set of vectors will 
be related in \S\ref{udtmultilin} to an explicit Diophantine
condition. As a consequence, the existence of such sets will be
guaranteed in any dimension. More precisely, the following result will
be established in \S\ref{proofthm5.3}: 

\begin{thm}\label{thmforestperesbis}
Assume that $s\ge d+1$. 
Let $\Phi$ be a non-increasing function tending to zero at
infinity such that  
\begin{equation}\label{growthconstraint}
\liminf_{T\rightarrow \infty}\frac{\Phi(2T)}{\Phi(T)}>0
\end{equation}
and 
\begin{equation}\label{convergencecondi}
\sum_{m=1}^{\infty}2^{m d (s+1)}\Phi\left(2^m\right)^{s-d}\; < \; \infty.
\end{equation}
Then, with respect to the $d\times s$-dimensional Lebesgue measure,
for almost all $\bm{\Theta}_{s,d}$ there is
$c=c\left( \bm{\Theta}_{s,d}\right)>0$ such that $\bm{\Theta}_{s,d}
\in UDT_s^d\left(c\Phi\right)$. 
\end{thm}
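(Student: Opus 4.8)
The plan is to set this up as a Borel--Cantelli argument on the parameter space $(\R^d)^s$, fibered over the "offset" variable $\bm\xi$. First I would unwind the definition: $\bm\Theta_{s,d}\notin UDT^d_s(c\Phi)$ means there exist $T\ge 1$ and $\bm\xi\in\R^d$ such that for \emph{every} $i\in\{1,\dots,s\}$ there is some $\bm u_i\in\Z^d\setminus\{\bm 0\}$ with $\|\bm u_i\|_\infty\le T$ and $\langle \bm u_i\cdot(\bm\xi-\bm\theta_i)\rangle < c\Phi(T)$. The growth condition \eqref{growthconstraint} lets me replace the continuous parameter $T$ by the dyadic scales $T=2^m$ (at the cost of adjusting $c$), so the bad set is contained in $\bigcup_{m\ge 1} B_m$, where
\[
B_m=\Big\{\bm\Theta_{s,d}:\ \exists\,\bm\xi\in[0,1]^d\ \text{s.t.}\ \forall i\,\exists\,\bm u_i,\ 0<\|\bm u_i\|_\infty\le 2^m,\ \langle \bm u_i\cdot(\bm\xi-\bm\theta_i)\rangle< c'\Phi(2^m)\Big\}.
\]
(Periodicity in each $\bm\theta_i$ lets me restrict everything to $[0,1]^{ds}$ and to $\bm\xi\in[0,1]^d$.) It then suffices, by Borel--Cantelli, to show $\sum_m \mathrm{Leb}(B_m)<\infty$, and the point is that \eqref{convergencecondi} is exactly the bound we will produce.

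The core estimate is a measure bound for $B_m$. The plan is to discretize $\bm\xi$: cover $[0,1]^d$ by $\asymp (2^m/\delta_m)^{d}$ cubes of side $\delta_m$ with $\delta_m\asymp \Phi(2^m)$, so that on each such cube the condition "$\exists \bm u_i$ with $\|\bm u_i\|_\infty\le 2^m$ and $\langle \bm u_i\cdot(\bm\xi-\bm\theta_i)\rangle< c'\Phi(2^m)$" is, up to constants, determined by the cube center $\bm\xi_0$: it forces $\bm\theta_i$ to lie within $O(\Phi(2^m))$ (in the metric on $\T^d$) of the set $\{\bm y: \exists\,\bm u,\ 0<\|\bm u\|_\infty\le 2^m,\ \langle \bm u\cdot(\bm\xi_0-\bm y)\rangle< c''\Phi(2^m)\}$. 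That latter set is a union of $\ll 2^{md}$ affine "slabs" (hyperplane neighborhoods of thickness $\asymp \Phi(2^m)/\|\bm u\|$, hence area $\ll \Phi(2^m)$ each in the unit torus, after summing $\sum_{\|\bm u\|_\infty\le 2^m}\|\bm u\|^{-1}\cdot\Phi(2^m)\cdot\|\bm u\|^{d-1}\ll 2^{md}\Phi(2^m)$ — I would keep track of the exact exponent here, it is the one routine computation). So for a \emph{fixed} $\bm\xi_0$ the event "$\bm\theta_i$ is captured" has measure $\ll 2^{md}\Phi(2^m)$ in the $\bm\theta_i$-variable. Crucially the $s$ coordinates $\bm\theta_1,\dots,\bm\theta_s$ are \emph{independent}, and $B_m$ (restricted to a cube around $\bm\xi_0$) demands that \emph{all $s$} of them be captured; hence that event has measure $\ll\big(2^{md}\Phi(2^m)\big)^{s}$. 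Summing over the $\ll (2^m/\Phi(2^m))^{d}$ choices of $\bm\xi_0$ gives
\[
\mathrm{Leb}(B_m)\ \ll\ \Big(\frac{2^m}{\Phi(2^m)}\Big)^{d}\cdot\big(2^{md}\Phi(2^m)\big)^{s}\ =\ 2^{md(s+1)}\,\Phi(2^m)^{s-d},
\]
which is the term in \eqref{convergencecondi}. Here the hypothesis $s\ge d+1$ is what makes $\Phi(2^m)^{s-d}$ a genuinely decaying factor (so the series can converge for reasonable $\Phi$) and is used nowhere else. An application of Borel--Cantelli then shows that for a.e.\ $\bm\Theta_{s,d}$ only finitely many $B_m$ occur; absorbing the finitely many exceptional scales and the discretization constants into the constant $c=c(\bm\Theta_{s,d})$ yields $\bm\Theta_{s,d}\in UDT^d_s(c\Phi)$.

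The step I expect to be the main obstacle is the discretization of the $\bm\xi$ variable done \emph{uniformly} in $\bm\Theta$: I need the quantifier "$\exists\bm\xi$" to survive passage to a finite $\delta_m$-net without losing control, i.e.\ I must check that moving $\bm\xi$ by $\delta_m\asymp\Phi(2^m)$ changes $\langle \bm u\cdot(\bm\xi-\bm\theta_i)\rangle$ by at most $\|\bm u\|_\infty\,\delta_m\le 2^m\Phi(2^m)$ — which is \emph{not} smaller than $\Phi(2^m)$, so a naive net is too coarse. The fix is to use a finer net of side $\delta_m\asymp 2^{-m}\Phi(2^m)$, giving $\ll (2^{2m}/\Phi(2^m))^{d}$ centers; one must then recheck that the resulting exponent of $2$ still matches \eqref{convergencecondi} — and it will, because the per-$\bm\theta_i$ slab bound improves correspondingly when one tracks the $\|\bm u\|$-dependence carefully (the "dangerous" $\bm u$ with $\|\bm u\|$ close to $2^m$ contribute slabs that, for the net to matter, only need the coarse net). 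Reconciling these two competing demands — fine enough net for the $\bm\xi$-quantifier, coarse enough that the union bound still sums — is the delicate bookkeeping, and getting the exponent $2^{md(s+1)}$ to come out exactly as in \eqref{convergencecondi} is where the proof must be done with care rather than by hand-waving. I would also invoke \eqref{growthconstraint} a second time at the very end, to pass back from the dyadic statement $\bm\Theta_{s,d}\in UDT^d_s(c\Phi\circ(\text{dyadic round-up}))$ to $\bm\Theta_{s,d}\in UDT^d_s(c'\Phi)$ on all real $T\ge 1$.
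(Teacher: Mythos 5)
Your approach is a genuinely different route from the paper's, and it \emph{can} be made to work, but the arithmetic as written is off, and your closing worry about the exponent not matching stems from that error rather than from a real obstruction. The cube count is the problem: covering $\bm\xi\in[0,1]^d$ by cubes of side $\delta_m$ requires $\asymp(1/\delta_m)^d$ centers, not $(2^m/\delta_m)^d$. In your displayed estimate you plugged the erroneous count with $\delta_m\asymp\Phi(2^m)$ and happened to land on the desired $2^{md(s+1)}\Phi(2^m)^{s-d}$; in the last paragraph you plugged the same erroneous count with $\delta_m\asymp 2^{-m}\Phi(2^m)$ and got $(2^{2m}/\Phi(2^m))^d$ centers, hence a spurious extra $2^{md}$. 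Done correctly, the fine net (which, as you correctly observe, is the one forced by the $\bm\xi$-quantifier since each $\bm u$ may have $\|\bm u\|_\infty\approx 2^m$) has $\asymp(2^m/\Phi(2^m))^d$ centers, and multiplying by the per-centre bound $(2^{md}\Phi(2^m))^s$ — which does \emph{not} improve with the finer net, the per-$\bm u$ slab measure on $\T^d$ is $\asymp\Phi(2^m)$ independently of $\|\bm u\|$ — yields exactly $2^{md(s+1)}\Phi(2^m)^{s-d}$. There is no extra ``delicate bookkeeping'' left and no trade-off to reconcile: the fine net already works. (Your intermediate computation $\sum_{\|\bm u\|_\infty\le 2^m}\|\bm u\|^{-1}\Phi\|\bm u\|^{d-1}$ is also slightly garbled; the cleanest route is that $\{\bm y\in\T^d:\langle\bm u\cdot\bm y\rangle<\varepsilon\}$ has measure $2\varepsilon$ for every nonzero integer $\bm u$, so summing over $O(2^{md})$ choices of $\bm u$ gives $O(2^{md}\Phi(2^m))$ directly.)

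By contrast, the paper avoids the $\bm\xi$-net entirely. Proposition~\ref{propmultilinalg} treats the $s$ inequalities $\langle\bm u_i\cdot(\bm\xi-\bm\theta_i)\rangle<\Phi(T)$ as an overdetermined linear system $\bm U\bm\xi=\bm p+\bm t_{\bm U}(\bm\Theta)+\bm\delta$ in the unknown $\bm\xi$, and characterizes its near-solvability by the Pl\"ucker condition $\|\bm X_{\bm U}\wedge y_{\bm p}(\bm U,\bm\Theta)\|<\sqrt s\,\Phi(T)$, which involves only the integer data $(\bm U,\bm p)$. The Borel--Cantelli then runs over dyadic $T$, matrices $\bm U$ and vectors $\bm p$, with Lemma~\ref{lememasure} (a Brunn--Minkowski argument) supplying the per-$\bm U$ measure bound $T^r\Phi(T)^{s-r}$ for $r=\mathrm{rank}\,\bm U$; summing over the $O(T^{sd})$ matrices and $r\le d$ gives the same $T^{d(s+1)}\Phi(T)^{s-d}$. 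What the multilinear-algebra route buys is that the existential quantifier over the continuous parameter $\bm\xi$ is eliminated algebraically rather than by a net, which removes the balancing concern you encountered and also makes the rank of $\bm U$ visible, giving the finer intermediate bound $T^r\Phi(T)^{s-r}$. Your net argument is more elementary and, once the cube count is corrected, equally effective for the coarse bound needed here.
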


As an immediate consequence of Theorems \ref{thmforestperes} and \ref{thmforestperesbis} we obtain:

\begin{cor}\label{coroforest}
Under the assumptions of Theorem~\ref{thmforestperesbis}, the
visibility in the dense forest
$\mathfrak{F}\left(\bm{\Theta}_{s,d}\right)$ constructed
in~\eqref{defforestfinal} can be bounded by~\eqref{boundvisibility}
for almost all $\bm{\Theta}_{s,d}$. 
\end{cor}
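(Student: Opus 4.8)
The plan is simply to combine Theorems~\ref{thmforestperes} and~\ref{thmforestperesbis}; the only point requiring a comment is the bookkeeping of the constant $c=c(\bm{\Theta}_{s,d})$ produced by the metric statement.

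So fix $s\ge d+1$ and a non-increasing function $\Phi$ tending to zero at infinity which satisfies the growth condition~\eqref{growthconstraint} and the convergence condition~\eqref{convergencecondi}. By Theorem~\ref{thmforestperesbis}, for almost every $s$-tuple $\bm{\Theta}_{s,d}$ (with respect to the $d\times s$-dimensional Lebesgue measure) there is a constant $c=c(\bm{\Theta}_{s,d})>0$ such that $\bm{\Theta}_{s,d}\in UDT_s^d(c\Phi)$. First I would note that $T\mapsto c\Phi(T)$ is again non-increasing and tends to zero at infinity, so Theorem~\ref{thmforestperes} applies verbatim with $\Phi$ replaced by $c\Phi$: for each such tuple, $\mathfrak{F}\left(\bm{\Theta}_{s,d}\right)$ is a dense forest in $\R^n$ whose visibility function satisfies
\[
v(\vre)=O\left(\left(\varepsilon^{d-1}\cdot (c\Phi)\left(d\varepsilon^{-1}\right)^{-1}\right)^d\right)=O\left(c^{-d}\left(\varepsilon^{d-1}\cdot \Phi\left(d\varepsilon^{-1}\right)^{-1}\right)^d\right).
\]

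Since $c^{-d}$ depends only on $\bm{\Theta}_{s,d}$ and not on $\vre$, it may be absorbed into the implicit constant of the $O$-notation, which gives exactly the bound~\eqref{boundvisibility} for almost every $\bm{\Theta}_{s,d}$; here the implicit constant depends on the chosen tuple, which is unavoidable in an almost-everywhere statement. Both inputs being already established (Theorem~\ref{thmforestperes} in~\S\ref{redproof} and Theorem~\ref{thmforestperesbis} in~\S\ref{proofthm5.3}), there is no genuine obstacle here; the one thing to be careful about is precisely this dependence of the implicit constant on $\bm{\Theta}_{s,d}$ through $c(\bm{\Theta}_{s,d})$.
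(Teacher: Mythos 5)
Your argument is exactly the paper's: the corollary is stated there as an immediate consequence of Theorems~\ref{thmforestperes} and~\ref{thmforestperesbis}, which is precisely the combination you carry out. Your extra remark that the constant $c(\bm{\Theta}_{s,d})$ is absorbed into the implied constant of the $O$-bound (making that constant $\bm{\Theta}_{s,d}$-dependent, as is unavoidable in an almost-everywhere statement) is a correct and useful clarification of what the paper leaves implicit.
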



For instance, by setting 
$$\Phi(T)=
T^{-\left(\frac{d(s+1)}{s-d} +\eta\right)} \ \ \text{ for } \eta > 0, $$
where $s\ge d+1$, one sees that Theorem \ref{thm: set with bounds} is
a consequence of Corollary \ref{coroforest}. Additional improvements
are possible by setting 
$$\Phi(T)=
T^{-\left(\frac{d(s+1)}{s-d}\right)} \, \log(T)^{-\beta}$$
for appropriately chosen $\beta = \beta_{s,d}$.

\ignore{

the visibility bound of \eqref{boundvisibility}

reads 
\begin{equation}\label{exponents}
O\left(\vre^{-(d+\alpha_d(s) + d\eta)} \right), \ \ \text{
  where }   
\alpha_d(s)\, = \, \frac{d^2(d+1)}{s-d} 
\end{equation}
In the planar case (that is, when $n=d+1=2$), this specializes to the
visibility bound $$O\left(\vre^{-1-2/(s-1)}\cdot
  \left(\log\vre^{-1}  \right)^{1/(s-1)+\eta} \right).$$

By letting the parameter $s$ tend to infinity in~\eqref{exponents},
Corollary~\ref{coroforest} thus provides a construction of a dense
forest in any dimension $n=d+1\ge 2$ with visibility arbitrarily close
(in a suitable sense) to the best possible \comfaustin{Justify this
  claim more carefully here or in the introduction} bound
$O\left(\vre^{-d}\right)$ that can be expected. The construction
is furthermore explicit up to the probabilistic choice of a matrix
$\bm{\Theta}_{s,d}$ which,  as shown  in Section~\ref{udtmultilin},
can nevertheless be chosen from a set of matrices satisfying a fully
explicit Diophantine condition. 

}
\subsection{Reduction to a Diophantine
  statement}\label{redproof} 
In this subsection we will examine what it means for a line segment
$\mathcal{L}$ to be $\vre$-close to the set $\mathfrak{F}
\left(\bm{\Theta}_{s,d}\right)$ defined 
in \eqref{defforestfinal}. For the computations in this subsection it
will be most convenient to work with the sup-norm on $\R^n$, and so in
this section $\|\bm{x}\| = \|\bm{x}\|_\infty$. As all norms on $\R^n$
are bi-Lipschitz equivalent to each other, and the problems we
consider are insensitve to multiplications by constants depending on
dimension, this involves no loss of generality.

Let 
$$0< \vre<\frac12$$ 
and let  $\mathcal{L}$ be the parameterized  line
segment  
\[ \mathcal{L} \df \left \{\bm{\alpha} t +\bm{\beta}: t \in [0, M] \right \}, \]
where $\bm{\alpha},
\bm{\beta} \in \R^n$, 
$\|\bm{\alpha}\| =1,$ so that $ M\ge 0$ is at most the length of
$\mathcal{L}$ and at least a fixed constant multiple of it. 

By \eqref{defsubforest} and \eqref{defforestfinal},
$\mathcal{L}$ is within distance $\vre$ of 
$\mathfrak{F} \left(\bm{\Theta}_{s,d}\right)$ if and only if for
some $\ell$, $J^{\ell}(\mathcal{L})$ is within distance $\vre$ of
$\mathfrak{F}_{1}\left(\bm{\Theta}_{s,d}\right)$. 
Since the 
matrix  $J$ in \equ{eq: permutation matrix} permutes the coordinates, there
is no loss of generality in assuming that $\|\bm{\alpha}\| =
|\alpha_1|$ (where $\alpha_1$ denotes the first coordinate of $\bm{\alpha}$). Also by
switching endpoints of $\mathcal{L}$ if necessary we can assume
$\alpha_1 =1$. Thus we now assume 
\begin{equation}\label{normalisationalpha}
\alpha_1 = \|\bm{\alpha}\| =  1,
\end{equation} 
and study when $\mathcal{L}$ comes $\vre$-close to the set 
$\mathfrak{F}_{1}\left(\bm{\Theta}_{s,d}\right)$ defined by  \eqref{deff_1}. 

Given $k\in\Z$, and using \eqref{normalisationalpha}, we see that
$\mathcal{L}$ intersects the hyperplane $\left\{\bm{x}: x_1=k \right\}
$ precisely when $\beta_1\le k \le \beta_1+M$, and the intersection
point is given by 
$\bm{\alpha}\left(k-\beta_1\right)+\bm{\beta}.$
It follows from \eqref{deff_1} that this point comes
$\vre$-close to $\mathfrak{F}_{1}\left(\bm{\Theta}_{s,d}\right)$ when
there exists an index $1 \le i\le s$ such that  
$$ \left\langle \bm{\alpha}(k-\beta_1)+\bm{\beta} - k
  \bm{\theta}_i\right\rangle_{\Z^d}\; = \; \left\langle k
  \left(\bm{\alpha}- \bm{\theta}_i\right)+\bm{\beta}-\beta_1\bm{\alpha}
\right\rangle_{\Z^d}\; < \; \vre .$$  

Write $k=\lceil \beta_1
\rceil+m$, where $0 \le m\le  M$ is an integer and where $\lceil \,
\cdot \, \rceil$ denotes the ceiling function. Then the preceding
discussion shows:

\begin{prop}\label{prop: have shown}
Suppose that 
\begin{equation}\label{distfibregeneralised}
\forall \bm{\xi}, \bm{\zeta}\in \R^d,
  \ \exists 0\le m\le M, \ \exists i\in\{1, \ldots, s\}  \  \text{
  s.t. } \left\langle
  m\left(\bm{\xi}- \bm{\theta_i}\right)+\bm{\zeta}\right\rangle_{\Z^d}
  < \vre.  
\end{equation} 
Then any line segment with length $M$ gets
$\vre$-close to a point in
$\mathfrak{F}\left(\bm{\Theta}_{s,d}\right)$.
\end{prop}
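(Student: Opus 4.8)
The statement is a direct reformulation of the discussion that immediately precedes it, so the plan is just to assemble those observations in the right order; there is no new idea here, only bookkeeping. First I would take an arbitrary line segment of length $M$ and reduce to the normalized situation. By \eqref{defsubforest} and \eqref{defforestfinal} the set $\mathfrak{F}(\bm{\Theta}_{s,d})$ is the union of the $n$ sets $J^{\ell-1}\mathfrak{F}_1(\bm{\Theta}_{s,d})$, $\ell=1,\dots,n$, and since $J$ merely permutes coordinates (see \eqref{eq: permutation matrix}) it is an isometry of $(\R^n,\|\cdot\|_\infty)$. Hence a segment comes $\vre$-close to $\mathfrak{F}(\bm{\Theta}_{s,d})$ as soon as some coordinate-permuted copy of it comes $\vre$-close to $\mathfrak{F}_1(\bm{\Theta}_{s,d})$. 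Choosing the permutation so that a coordinate of the direction vector of largest absolute value is moved into the first slot, and switching the two endpoints if necessary, I may assume the segment is $\mathcal{L}=\{\bm{\alpha}t+\bm{\beta}:t\in[0,M]\}$ with $\alpha_1=\|\bm{\alpha}\|_\infty=1$, as in \eqref{normalisationalpha}, and it is enough to show that this $\mathcal{L}$ comes $\vre$-close to $\mathfrak{F}_1(\bm{\Theta}_{s,d})$.

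Next I would use the computation already carried out above. Writing $\bm{\alpha}=(1,\bm{\alpha}')$ and $\bm{\beta}=(\beta_1,\bm{\beta}')$ with $\bm{\alpha}',\bm{\beta}'\in\R^d$, for each integer $k$ with $\beta_1\le k\le\beta_1+M$ the segment meets the hyperplane $\{x_1=k\}$ in exactly one point; because $\vre<1/2$ and that point has integer first coordinate, its distance to each of the lattices constituting $\mathfrak{F}_1(\bm{\Theta}_{s,d})$ (see \eqref{deff_1}) is realized on the slice $\{x_1=k\}$ of that lattice, so the point is $\vre$-close to $\mathfrak{F}_1(\bm{\Theta}_{s,d})$ precisely when $\langle k(\bm{\alpha}'-\bm{\theta}_i)+(\bm{\beta}'-\beta_1\bm{\alpha}')\rangle_{\Z^d}<\vre$ for some $i\in\{1,\dots,s\}$. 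The integers $k$ for which $\mathcal{L}$ actually crosses $\{x_1=k\}$ are consecutive and begin at $\lceil\beta_1\rceil$, so the substitution $k=\lceil\beta_1\rceil+m$ lets $m$ run through all integers in $[0,M]$ (up to at most one integer lost at the top in the degenerate cases $\beta_1\in\Z$ or $M\in\Z$, which is harmless). Now I would invoke the hypothesis \eqref{distfibregeneralised}: applied with $\bm{\xi}:=\bm{\alpha}'$, with the integer $m$ of \eqref{distfibregeneralised} in the role of the hyperplane index $k$ via this reindexing, and with $\bm{\zeta}$ the corresponding fixed shift, it supplies an integer $m\in[0,M]$ and an index $i$ realizing the displayed inequality, hence a point of $\mathcal{L}$ within $\vre$ of $\mathfrak{F}_1(\bm{\Theta}_{s,d})$ and therefore within $\vre$ of $\mathfrak{F}(\bm{\Theta}_{s,d})$.

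I expect the only point requiring genuine care to be this last one: matching the block of integer hyperplanes actually crossed by $\mathcal{L}$ with the interval $[0,M]$ appearing in \eqref{distfibregeneralised}, and tracking the ceiling shift $\lceil\beta_1\rceil$ carefully, so that the affine data left over after the shift is genuinely independent of the index $i$ and thus legitimately serves as the pair $(\bm{\xi},\bm{\zeta})$ in \eqref{distfibregeneralised}. The boundary cases $\beta_1\in\Z$ and $M\in\Z$ should be checked separately, but the loss there is at most one hyperplane and is absorbed by the freedom in the relation between $M$ and the true length of the segment. Everything else is the elementary rewriting already displayed in the paragraphs preceding the statement, so the write-up should be very short.
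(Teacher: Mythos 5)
Your plan follows the paper's own sketch, and the bookkeeping of the reduction (use $J$ to move the largest coordinate of $\bm{\alpha}$ into position $1$, switch endpoints, identify the integer slices, and compute the proximity condition on each slice) is correct. The problem is precisely the point you flag at the end as the one needing care: the ``affine data left over after the shift'' is \emph{not} independent of the index $i$. Writing $\bm{\alpha}=(1,\bm{\alpha}')$ and $\bm{\beta}=(\beta_1,\bm{\beta}')$ with $\bm{\alpha}',\bm{\beta}'\in\R^d$, the proximity condition on the slice $\{x_1=k\}$ reads $\langle k(\bm{\alpha}'-\bm{\theta}_i)+\bm{\beta}'-\beta_1\bm{\alpha}'\rangle_{\Z^d}<\vre$, and substituting $k=\lceil\beta_1\rceil+m$ yields
$$\Bigl\langle m\bigl(\bm{\alpha}'-\bm{\theta}_i\bigr)+\bigl[(\lceil\beta_1\rceil-\beta_1)\bm{\alpha}'+\bm{\beta}'\bigr]-\lceil\beta_1\rceil\,\bm{\theta}_i\Bigr\rangle_{\Z^d},$$
whose constant part carries the extra summand $-\lceil\beta_1\rceil\,\bm{\theta}_i$, which depends on $i$ whenever $\lceil\beta_1\rceil\ne 0$. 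That quantity therefore cannot serve as a single $\bm{\zeta}$ in \eqref{distfibregeneralised}, where $\bm{\zeta}$ must be fixed \emph{before} the existential quantifier on $i$. As written, your argument only handles segments with $\lceil\beta_1\rceil=0$, i.e.\ $\beta_1\in(-1,0]$.

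The way to close the gap is to invoke the slightly stronger statement that the paper records in the sentence immediately following the proposition and actually feeds into it in the deduction of Theorem~\ref{thmforestperes}: for every $\bm{\xi}\in\R^d$ there is a single index $i$, depending on $\bm{\xi}$ alone, for which the whole orbit $\bigl(m\,\pi(\bm{\xi}-\bm{\theta}_i)\bigr)_{0\le m\le M}$ is $\vre$-dense in $\T^d$. Then one may fix $i$ first by taking $\bm{\xi}=\bm{\alpha}'$, and only afterwards choose the $i$-dependent target $\bm{\zeta}_i:=(\lceil\beta_1\rceil-\beta_1)\bm{\alpha}'+\bm{\beta}'-\lceil\beta_1\rceil\,\bm{\theta}_i$, which is then hit by $\vre$-density. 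Note that one cannot escape the ceiling shift by translating both $\mathcal{L}$ and the forest by $(-\lceil\beta_1\rceil,\bm{0})$: that translation moves the $i$-th lattice of $\mathfrak{F}_1(\bm{\Theta}_{s,d})$ by $(0,\lceil\beta_1\rceil\,\bm{\theta}_i)$ in the last $d$ coordinates, a different amount for each $i$, so it reproduces exactly the same $i$-dependence.
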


In turn, \eqref{distfibregeneralised} is implied by the statement that
for every $ \bm{\xi}\in \R^d$ there is an index $1 \le i\le s$ for
which the finite sequence 
$\left(m \cdot \pi\left(\bm{\xi}-
      \bm{\theta_i}\right)\right)_{0\le m \le M}$ 
is $\vre$-dense in $\T^d$ 
%
(with respect to 
the metric on $\T^d$ 
induced by $\langle \, \cdot \,\rangle_{\Z^d}$). 
We will now investigate conditions under which 
the multiples of a vector are \emph{not}
$\vre$-dense in the torus. Given
parameters $\vre>0$ and $M\ge 1$, define  
\begin{equation}\label{defC_eps_M}
C_d(\vre, M) = \left\{ 
\bm{\xi}\in\T^d : \text{ the sequence } \left(m
    \bm{\xi}\right)_{0\le m \le M} \text{ is not }
\vre\text{-dense in } \T^d 
\right\}. 
\end{equation}
Let also 
\begin{equation*}\label{defS_eps_M_c}
S_d(\vre, M)  \df \left\{\bm{\xi}\in\T^d\; :\; \exists
  \bm{u}\in\Z^d\backslash\left\{\bm{0}\right\}, \;\;
  \left\|\bm{u}\right\|\le c_d\vre^{-1}, \;\; \left\langle \bm{u\cdot
      \xi}\right\rangle \leq c'_d\cdot\frac{\vre^{d-1}}{M^{1/d}} \right\},
\end{equation*}
where 
\begin{equation}\label{defcd}
c_d\df d \qquad \textrm{and} \qquad c'_d \df d^{3/2}.
\end{equation}

\begin{prop}\label{propreduc}
With the above notation, assume that 
\begin{equation}\label{contrainteM}
M\ge 2^d  \vre^{-d}.
\end{equation} 
Then, $$C_d(\vre, M)\,\subset\, S_d(\vre, M).$$
\end{prop}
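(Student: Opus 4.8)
The plan is to prove the contrapositive: if $\bm{\xi} \notin S_d(\vre,M)$, then the finite sequence $(m\bm{\xi})_{0 \le m \le M}$ is $\vre$-dense in $\T^d$, i.e.\ $\bm{\xi} \notin C_d(\vre, M)$. So assume that for every $\bm{u} \in \Z^d \setminus \{\bm{0}\}$ with $\|\bm{u}\| \le c_d \vre^{-1}$ we have $\langle \bm{u} \cdot \bm{\xi} \rangle > c'_d \cdot \vre^{d-1}/M^{1/d}$. We must show every point $\bm{\zeta} \in \T^d$ is within sup-norm distance $\vre$ of some $m \bm{\xi}$ with $0 \le m \le M$. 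First I would set $N = \lfloor M \rfloor + 1$ (so $N$ is essentially $M$) and consider the $N$ points $m\bm{\xi}$, $m = 0, 1, \dots, N-1$, in $\T^d$. By the assumption $M \ge 2^d \vre^{-d}$ (equation \eqref{contrainteM}), we have $N \ge \vre^{-d}$, which is exactly the threshold at which $N$ points could conceivably be $\vre$-dense in the $d$-torus. The strategy is the standard Weyl/three-distance-type argument: either these multiples are already $\vre$-dense (and we are done), or they cluster, which forces a small value of $\langle \bm{u} \cdot \bm{\xi}\rangle$ for a short integer vector $\bm{u}$, contradicting the hypothesis.

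To make this precise I would argue as follows. Suppose the sequence $(m\bm{\xi})_{0 \le m \le M}$ is \emph{not} $\vre$-dense; then there is a point $\bm{\zeta}_0 \in \T^d$ whose open $\vre$-ball (in $\|\cdot\|_\infty$) contains no $m\bm{\xi}$ for $0 \le m \le M$. Now partition $\T^d$ into a grid of small cubes: using cubes of side length roughly $\eta := c_1 \vre^{d}/M^{(d-1)/d}$ (the precise constant to be pinned down), there are about $\eta^{-d} \asymp M^{d-1}\vre^{-d^2}$ such cubes — far more than $M$, so by pigeonhole... actually the cleaner route is a \emph{covering} argument. Since the $\vre$-ball around $\bm{\zeta}_0$ is missed, consider translating: the set $\{\bm{\zeta}_0 - m\bm{\xi} : 0 \le m \le M\}$ avoids $B_\infty(\bm{0}, \vre)$. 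By a volume/pigeonhole count, among the $N \gtrsim \vre^{-d}$ translates $-m\bm{\xi}$ inside the torus, two of them, say $-m_1\bm{\xi}$ and $-m_2\bm{\xi}$ with $m_1 \ne m_2$, must be close to each other — but I want a quantitatively strong version. The right tool here is Minkowski's theorem / a Dirichlet-box argument applied to the lattice generated by $\bm{\xi}$ and $\Z^d$: the hypothesis $\bm{\xi} \notin S_d(\vre,M)$ says precisely that the dual lattice has no short vectors in a certain box, and by transference this bounds the covering radius of the relevant projected lattice, which is what $\vre$-density asks for.

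Concretely, the cleanest implementation: let $\Gamma \subset \R^{d+1}$ (or work directly in $\T^d$) and apply the following. For $T := c_d \vre^{-1}$, consider the "bad frequencies" — by hypothesis, $\langle \bm{u}\cdot\bm{\xi}\rangle > c'_d \vre^{d-1} M^{-1/d}$ for all nonzero $\bm{u} \in \Z^d$ with $\|\bm{u}\|_\infty \le T$. I would invoke a quantitative equidistribution / Erd\H{o}s–Tur\'an–type or, more elementarily, a direct Fourier-free pigeonhole-plus-transference argument: the multiples $\{m\bm{\xi} : 0 \le m < N\}$ fail to be $\vre$-dense only if there is a "gap hyperplane direction" $\bm{u}$ of bounded height with the corresponding one-dimensional projection $\{m(\bm{u}\cdot\bm{\xi}) \bmod 1\}$ leaving a gap of length $\gtrsim \vre / \|\bm{u}\|$ — and the number of points it contains, combined with the three-distance theorem on the circle, forces $\langle \bm{u}\cdot\bm{\xi}\rangle \lesssim 1/N \lesssim \vre^{d}/M^{(d-1)/d}$ after accounting for the slab thickness $\vre$; keeping track of the geometric constants $\sqrt{d}$ from passing between $\|\cdot\|_2$ and $\|\cdot\|_\infty$ produces exactly the constants $c_d = d$ and $c'_d = d^{3/2}$ in \eqref{defcd}.

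\textbf{Main obstacle.} The conceptual structure is routine (it is a transference between "$N$ multiples of $\bm{\xi}$ are $\vre$-dense" and "the dual lattice has no short vector in a box"), but the genuinely delicate part is extracting the \emph{single short integer relation} $\bm{u}$ from the failure of $\vre$-density in a way that respects all three quantitative scales simultaneously — the height bound $\|\bm{u}\| \le d\vre^{-1}$, the smallness $\langle \bm{u}\cdot\bm{\xi}\rangle \le d^{3/2}\vre^{d-1}M^{-1/d}$, and the hypothesis $M \ge 2^d\vre^{-d}$ (which is what makes the product $N \cdot (\text{slab volume})$ large enough to run the pigeonhole). I expect the argument to go through a careful application of Minkowski's convex body theorem to a suitably chosen symmetric convex body in $\R^{d+1}$ encoding both "$\bm{u}$ short" and "$m\bm{u}\cdot\bm{\xi}$ near an integer for the relevant range of $m$", and the bookkeeping of the dimensional constants there is where the real work lies; the condition \eqref{contrainteM} with the factor $2^d$ is presumably exactly what is needed to make the volume of that body exceed $2^{d+1}$.
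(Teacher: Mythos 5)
Your proposal is a plan, not a proof, and the missing ideas are exactly the ones that make the paper's argument work. You correctly identify the shape of the statement (failure of $\vre$-density must produce a short $\bm{u}\in\Z^d$ with $\langle\bm{u}\cdot\bm{\xi}\rangle$ small) and correctly suspect that Dirichlet/Minkowski and transference are involved, but you never extract the short vector, and you explicitly flag the key step as an obstacle you \emph{expect} to resolve. That step does not follow from a single Minkowski application in $\R^{d+1}$: the set $\Z\bm{\xi}+\Z^d$ is in general dense, so there is no lattice whose covering radius controls $\vre$-density of the truncated orbit $(m\bm{\xi})_{0\le m\le M}$, and a ``gap hyperplane direction'' is not forced by the mere existence of a missed $\vre$-ball.

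What the paper does, and what your proposal lacks, is a two-stage reduction. First (Lemma \ref{leminclusion}), Dirichlet in simultaneous form produces a rational $\bm{p}/q$ with $1\le q\le M$, $\|\,q\bm{\xi}-\bm{p}\,\|\le M^{-1/d}$, and the crucial additional property that the orbit of $\bm{p}/q$ up to time $q-1$ already fails $\vre/2$-density (this is where the constraint \eqref{contrainteM} is used, to absorb the drift $m\|\bm{\xi}-\bm{p}/q\|$). Now the orbit of a \emph{rational} vector genuinely is the projection of a lattice $\Lambda(\bm{p},q)$, so covering-radius and dual-lattice tools apply. Second, the proof branches on $q$: when $q<\vre^{-d}$, Mahler's transference theorem (Lemma \ref{Mahler}) converts the simultaneous approximation $\|q\bm{\xi}-\bm{p}\|\le M^{-1/d}$ directly into a short $\bm{u}$ with $\langle\bm{u}\cdot\bm{\xi}\rangle$ small; when $q\ge\vre^{-d}$, the failure of $\vre/2$-density of $(k\bm{p}/q)_{0\le k<q}$ combined with Banaszczyk's bound $\mu(\Lambda)\lambda_1(\Lambda^*)\le d/2$ (Lemma \ref{lemfirstmindesity}) yields a nonzero $\bm{u}\in\Lambda^*(\bm{p},q)\subset\Z^d$ with $\|\bm{u}\|_2\le d\vre^{-1}$, and then Cauchy--Schwarz together with $q\ge\vre^{-d}$ gives the required bound on $\langle\bm{u}\cdot\bm{\xi}\rangle$. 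Neither the rational-approximation reduction, the dichotomy on $q$, nor the Mahler/Banaszczyk transference tools appear in your sketch, so the gap you name in your final paragraph is real and is not a matter of bookkeeping constants.
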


Proposition~\ref{propreduc} will be proved in the next section. We now
use it to derive Theorem~\ref{thmforestperes}. 

\begin{proof}[Deduction of Theorem~\ref{thmforestperes} from
  Proposition~\ref{propreduc}] Given $\vre>0$, $M\ge 1$ and
  $\bm{\Theta}_{s,d}$ as in~\eqref{defthetas}, set 
\[ \Sigma_d\left(\vre, M, \bm{\Theta}_{s,d}\right)\; \df\;
\bigcap_{i=1}^{s}\left(S_d(\vre, M)+\bm{\theta}_{i}\right), \]
where addition is taken on $\T^d$ and we identify $\bm{\theta}_i$ with
its projection modulo $\Z^d$.

Assume that $\bm{\Theta}_{s,d}\in
UDT_s^d(\Phi)$. 
Definition~\ref{defudt} is then readily seen to imply
that the set $\Sigma_d\left(\vre, M, \bm{\Theta}_{s,d}\right)$ is
empty whenever 
\begin{equation}\label{eq: whenever}
M>\left(c'_d\cdot \vre^{d-1}\cdot
  \Phi\left(c_d\vre^{-1}\right)^{-1}\right)^d,
\end{equation}
in which case, for every $\bm{\xi}$  there
exists an index $i\in\{ 1, \ldots , s\}$ such that $\bm{\xi}
-\bm{\theta}_{i}\not\in S_d(\vre, M)$. Using \equ{grothphi} we see that
\equ{eq: whenever}
implies that
$
M > c \vre^{-d}
$
for some constant $c$ depending only on $d$. Thus, replacing $M$ if
necessary by its constant multiple, we have that \eqref{contrainteM}
is also satisfied, and hence 
by Proposition~\ref{propreduc} we have that 
$\left(m \left(\bm{\xi}-
      \bm{\theta_i}\right)\right)_{0\le m \le M}$ 
is $\vre$-dense in $\T^d$. This implies Theorem~\ref{thmforestperes}
via Proposition \ref{prop: have shown}. 
\end{proof}

\section{Effective equidistribution in tori}\label{sec: toral equidistribution}
%
The goal of this section is to prove
Proposition~\ref{propreduc}. In this section, unless stated otherwise,
we continue with the 
notation $\|\bm{x}\| = \|\bm{x}\|_\infty$, and use the metric on
$\T^d$ induced by the sup-norm. 
%
The following lemma provides a necessary condition for
$\bm{\xi}\in\T^d$ to belong to the set $C_d(\vre, M)$ defined
in~\eqref{defC_eps_M}. This condition reduces the proof of
Proposition~\ref{propreduc} to the study of the 
multiples of a \emph{rational} vector. 

\begin{lem}\label{leminclusion}
Assume that~\eqref{contrainteM} holds.
Then
\begin{equation}\label{eq:inclusion}
C_d\left(\vre, M\right)\; \subset \; \bigcup_{\bm{p}/q\in S}
B\left(\frac{\bm{p}}{q}, \frac{1}{qM^{1/d}} \right),
\end{equation}
where
$S$ is the set of all rational vectors $\bm{p}/q\in\T^d$ such that  
\begin{equation}\label{reducratio}
1 \le q \le M \qquad \textrm{and} \qquad \frac{\bm{p}}{q}\in
C_d\left(\frac{\vre}{2}, q\right).  
\end{equation}
\end{lem}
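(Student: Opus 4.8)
Fix $\bm{\xi}\in C_d(\vre,M)$, so by definition the sequence $(m\bm{\xi})_{0\le m\le M}$ is not $\vre$-dense in $\T^d$; pick a witness $\bm{y}\in\T^d$ with $\langle m\bm{\xi}-\bm{y}\rangle_{\Z^d}\ge\vre$ for all $0\le m\le M$. The idea is to approximate $\bm{\xi}$ by a rational vector $\bm{p}/q$ with controlled denominator, transfer the non-density of the $\bm{\xi}$-orbit to non-density of the $\bm{p}/q$-orbit at a slightly worse radius $\vre/2$, and read off the two displayed conclusions. The rational approximation comes from Dirichlet's theorem (the simultaneous version): there exist $\bm{p}\in\Z^d$ and an integer $q$ with $1\le q\le M$ and $\|\bm{\xi}-\bm{p}/q\|_\infty\le 1/(qM^{1/d})$. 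This immediately gives $\bm{\xi}\in B(\bm{p}/q,\,1/(qM^{1/d}))$, which is the shape of the right-hand side of \eqref{eq:inclusion}; so what remains is to check that this particular $\bm{p}/q$ lies in the set $S$, i.e.\ that it satisfies the second condition in \eqref{reducratio}, namely $\bm{p}/q\in C_d(\vre/2,q)$.

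\textbf{Main step: transferring non-density.} To see $\bm{p}/q\in C_d(\vre/2,q)$, I would argue that if the finite sequence $(m\,\bm{p}/q)_{0\le m\le q}$ \emph{were} $(\vre/2)$-dense in $\T^d$, then the full sequence $(m\bm{\xi})_{0\le m\le M}$ would be $\vre$-dense, contradicting $\bm{\xi}\in C_d(\vre,M)$. The mechanism: write an arbitrary integer $0\le m\le M$ (in fact one only needs $0\le m<q\cdot\lfloor M/q\rfloor$, and $M\ge q$ by the first inequality in \eqref{reducratio}) and compare $m\bm{\xi}$ with $m\,\bm{p}/q$; since $\|\bm{\xi}-\bm{p}/q\|\le 1/(qM^{1/d})$ and $m\le M$, one gets $\langle m\bm{\xi}-m\,\bm{p}/q\rangle_{\Z^d}\le M\cdot \tfrac{1}{qM^{1/d}}=\tfrac{M^{1-1/d}}{q}$. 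The cleanest route is to note that $(m\,\bm{p}/q)_{m\ge 0}$ is periodic of period (dividing) $q$, so its image over $0\le m\le M$ equals its image over $0\le m\le q$. Given any target $\bm{z}\in\T^d$, $(\vre/2)$-density of the $\bm{p}/q$-orbit supplies $m_0\le q$ with $\langle m_0\,\bm{p}/q-\bm{z}\rangle<\vre/2$; then provided the drift term $\tfrac{M^{1-1/d}}{q}$ — or rather the drift accumulated up to index $m_0\le q$, which is only $q\cdot\tfrac{1}{qM^{1/d}}=M^{-1/d}$ — is $<\vre/2$, one concludes $\langle m_0\bm{\xi}-\bm{z}\rangle<\vre$. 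The inequality $M^{-1/d}<\vre/2$ is exactly (a mild consequence of) the hypothesis \eqref{contrainteM}, since $M\ge 2^d\vre^{-d}$ gives $M^{1/d}\ge 2/\vre$. This is the step where the constant $2^d$ in \eqref{contrainteM} is used, and it is the main point to get right: one must make sure the index used ($m_0\le q$, not $m_0\le M$) keeps the accumulated error below $\vre/2$, and one should double-check whether a single period suffices or whether one needs indices in $\{0,\dots,q\lfloor M/q\rfloor\}\subset\{0,\dots,M\}$ to also handle density (periodicity makes this a non-issue, but the $\bm{\xi}$-orbit is not periodic, so the careful bookkeeping is on the $\bm{\xi}$ side).

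\textbf{Assembling the conclusion.} Combining: every $\bm{\xi}\in C_d(\vre,M)$ lies in $B(\bm{p}/q,\,1/(qM^{1/d}))$ for some $\bm{p}/q$ with $1\le q\le M$ and $\bm{p}/q\in C_d(\vre/2,q)$, i.e.\ for some $\bm{p}/q\in S$; this is precisely \eqref{eq:inclusion}. I would also record that without loss of generality $\gcd(\bm{p},q)=1$ (replacing $\bm{p}/q$ by its reduced form only shrinks $q$ and enlarges the ball, and the membership $\bm{p}/q\in C_d(\vre/2,q)$ is only helped), though this is a cosmetic point used later in Proposition~\ref{propreduc}. The only genuine obstacle is the drift estimate in the transfer step — in particular being honest about which range of $m$ one quantifies over and verifying that \eqref{contrainteM} delivers exactly the slack $M^{-1/d}<\vre/2$ needed; everything else is a direct application of simultaneous Dirichlet.
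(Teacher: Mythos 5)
Your proof is correct and follows essentially the same route as the paper's: apply simultaneous Dirichlet to get $\bm p/q$ with $1\le q\le M$ and $\|\bm\xi-\bm p/q\|<1/(qM^{1/d})$, then use the triangle inequality $\langle m\bm\xi-\bm\lambda\rangle_{\Z^d}\le m\|\bm\xi-\bm p/q\|+\langle m\bm p/q-\bm\lambda\rangle_{\Z^d}$ with $m\le q$ (so the drift is at most $M^{-1/d}\le\vre/2$ by \eqref{contrainteM}) to transfer $\vre/2$-density of the rational orbit to $\vre$-density of $(m\bm\xi)_{0\le m\le M}$. The only cosmetic difference is that the paper phrases this as the contrapositive (taking $\bm\xi$ outside the union of balls and deducing $\bm\xi\notin C_d(\vre,M)$), while you argue directly; the two are logically identical, and your bookkeeping of the drift term and the role of \eqref{contrainteM} matches the paper's.
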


\begin{proof}
We prove that the complement of the right hand side of \eqref{eq:inclusion} is contained in the complement of the left hand side.   

Let $\bm{\xi}\in\bigcap_{\bm{p}/q\in S}\limits\left[\T^d\smallsetminus B\left(\frac{\bm{p}}{q}, \frac{1}{qM^{1/d}} \right)\right]$. By Dirichlet's theorem, there exist a vector
$\bm{p}\in\Z^d$ and an integer 
$1\le q \le M$ such that  
$$\left\| \bm{\xi}-\frac{\bm{p}}{q}\right\|<
\frac{1}{qM^{1/d}}\cdotp$$ 
This implies that $\bm{p}/q\notin S$, namely that $\left(m\cdot \pi(\bm{p}/q )
\right)_{0\le m \le q-1}$ is $\vre/2$-dense in $\T^d$. Assuming \eqref{contrainteM}, we show that $\left(m\cdot\pi(\bm{\xi})  \right)_{0\le m \le M}$ is
$\vre$-dense in $\T^d$. 

Let $\bm{\lambda}\in\T^d$. 
By the $\vre/2$-density of 
$\left(m \cdot\pi(\bm{p}/q )\right)_{0\le m \le q-1}$
there exists an integer $0\le m \le q-1$ such that  
$$\left\langle m \frac{\bm{p}}{q} - 
\bm{\lambda} 
\right\rangle_{\Z^d} <\frac{\vre}{2}\cdotp$$ 
Then,
\begin{align*}
\left\langle m\bm{\xi}-\bm{\lambda} \right\rangle_{\Z^d} \; & = \;
                                                              \left\langle
                                                              m\left(\bm{\xi}-\frac{\bm{p}}{q}
                                                              \right)
                                                              +\left(
                                                              m\frac{\bm{p}}{q}
-\bm{\lambda}\right)\right\rangle_{\Z^d}
  \\ 
& \le \; m \cdot \left\| \bm{\xi}-\frac{\bm{p}}{q}\right\|+
  \left\langle m \frac{\bm{p}}{q} - 
\bm{\lambda} 
\right\rangle_{\Z^d}\\ 
& < \; \frac{1}{M^{1/d}}+\frac{\vre}{2}
\underset{\eqref{contrainteM}}{\le}\; \vre,
\end{align*}
whence the lemma.
\end{proof}

In view of Lemma~\ref{leminclusion}, we wish to provide a
necessary condition for the relation 
$\bm{p}/q\in
C_d\left(\frac{\vre}{2}, q\right)$ appearing
in~\eqref{reducratio} to
hold. For this we will recast the statement in terms of lattices. 
Let $\Lambda\left(\bm{p},
  q\right)$  be the lattice spanned by the rational vector
$\bm{p}/q\in\R^d$  and by the vectors  $\bm{e}_1, \dots, \bm{e}_d$ of
the standard basis of $\R^d$; that is,  
\begin{equation*}\label{defreseauratio}
\Lambda\left(\bm{p}, q\right) \;\df \;
\textrm{span}_{\Z}\left\{\frac{\bm{p}}{q},  \bm{e}_1, \dots,
  \bm{e}_d\right\} \; \subset \; \R^d . 
\end{equation*}
Also let 
\[ \Lambda^*\left(\bm{p}, q\right)\; \df \; \left\{\bm{u}\in\Z^d\;\; :
  \; \; \bm{p\cdot u}\equiv 0\pmod{q} \right\}. \]
It is easily seen that $\Lambda\left(\bm{p}, q\right)$
is the dual of $\Lambda^*\left(\bm{p}, q\right)$, and of index $q$ in
$\Z^d$. From this and \equ{eq: vol covol} it is easy to deduce the following:

\begin{lem}\label{Pproponewlattice}
The lattice $\Lambda\left(\bm{p}, q\right)$
has
covolume $1/q$ whenever $\gcd(\bm{p},q)=1$, and 
\begin{equation}\label{interpretationreseau}
\pi\left(\Lambda\left(\bm{p}, q\right) \right)  = 
\left(k\cdot \pi\left(\frac{\bm{p}}{q}\right)\right)_{0\le k\le q-1}. 
\end{equation}
\end{lem}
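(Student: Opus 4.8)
The plan is to prove Lemma \ref{Pproponewlattice} by first verifying the two claims about $\Lambda^*(\bm p,q)$ — namely that it has index $q$ in $\Z^d$ and that its dual is $\Lambda(\bm p,q)$ — and then reading off the covolume statement and the identity \eqref{interpretationreseau}.

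\textbf{Index computation.} First I would observe that, assuming $\gcd(\bm p,q)=1$, the map $\bm u \mapsto \bm p\cdot\bm u \bmod q$ is a surjective group homomorphism $\Z^d \to \Z/q\Z$, so its kernel $\Lambda^*(\bm p,q)$ has index exactly $q$ in $\Z^d$. (Surjectivity holds because $\gcd$ of the coordinates of $\bm p$ together with $q$ is $1$, so some integer combination $\bm p\cdot\bm u$ is $\equiv 1$.) Hence $\covol(\Lambda^*(\bm p,q)) = q$ as a lattice in $\R^d$ (relative to $\Z^d$).

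\textbf{Duality.} Next I would check that $\Lambda(\bm p,q)$, as defined by $\mathrm{span}_\Z\{\bm p/q, \bm e_1,\dots,\bm e_d\}$, equals the dual $(\Lambda^*(\bm p,q))^*$. For the inclusion $\Lambda(\bm p,q) \subseteq (\Lambda^*(\bm p,q))^*$: each $\bm e_i$ pairs integrally with any integer vector, and $(\bm p/q)\cdot\bm u = (\bm p\cdot\bm u)/q \in \Z$ precisely when $\bm u\in\Lambda^*(\bm p,q)$, so the generators lie in the dual. For the reverse inclusion one can either argue by covolumes — from \eqref{eq: vol covol} the dual has covolume $1/q$, and the sublattice $\Lambda(\bm p,q)$ already has covolume $1/q$ (it contains $\Z^d$ with index $q$, since $q\cdot(\bm p/q)=\bm p\in\Z^d$ shows $[\Z^d:\Z^d\cap\Lambda(\bm p,q)]$ behaves correctly and a direct determinant computation on the generating matrix $\begin{pmatrix}\bm p/q & I_d\end{pmatrix}$ after row reduction gives volume $1/q$) — so equality of the two lattices of equal covolume, one contained in the other, forces them to coincide. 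This simultaneously gives $\covol(\Lambda(\bm p,q)) = 1/q$, the first assertion of the lemma.

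\textbf{The multiples identity.} Finally, for \eqref{interpretationreseau}, I would note that modulo $\Z^d$ the lattice $\Lambda(\bm p,q)$ projects to the cyclic subgroup of $\T^d$ generated by $\pi(\bm p/q)$, which is exactly $\{k\cdot\pi(\bm p/q) : 0\le k\le q-1\}$ since $q\cdot(\bm p/q)=\bm p\equiv\bm 0$, and this group has order $q$ precisely when $\gcd(\bm p,q)=1$ (consistent with the index being $q$). The main obstacle — though a mild one — is being careful about the $\gcd(\bm p,q)=1$ hypothesis: it is needed both for the surjectivity in the index argument and for the cyclic group in \eqref{interpretationreseau} to have full order $q$; without it one gets a proper divisor of $q$. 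Everything else is a routine determinant/duality bookkeeping exercise, which is why the lemma is stated as an easy deduction from \eqref{eq: vol covol}.
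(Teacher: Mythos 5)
Your approach is correct and matches the paper's intended argument (the paper states the duality $\Lambda(\bm p,q)=\Lambda^*(\bm p,q)^*$ and the index of $\Lambda^*(\bm p,q)$ in $\Z^n$ as preliminary facts and deduces the lemma from these together with \eqref{eq: vol covol}). Two small remarks on the details. First, the index you write, $[\Z^d:\Z^d\cap\Lambda(\bm p,q)]$, is always $1$, since $\Z^d\subset\Lambda(\bm p,q)$; the correct quantity is $[\Lambda(\bm p,q):\Z^d]$, and the fact $q\cdot(\bm p/q)=\bm p\in\Z^d$ only shows this index \emph{divides} $q$ — one must invoke $\gcd(\bm p,q)=1$ here, not only at the end, to see that $\pi(\bm p/q)$ has order exactly $q$ in $\Lambda(\bm p,q)/\Z^d$. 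Second, once that index is computed directly, you already have $\covol(\Lambda(\bm p,q))=1/q$ and \eqref{interpretationreseau}, so the detour through $\Lambda^*(\bm p,q)$ and its dual is logically redundant for the lemma's claims (though it matches the background the paper records before the lemma). The "row reduction of the $d\times(d+1)$ generating matrix" remark is also informal, since one must pass to a genuine $\Z$-basis or Smith normal form before taking a determinant; but none of these affect the soundness of the overall argument.
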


\ignore{
\begin{proof}
The first two claims are easily seen to hold. As for the last one, it
follows from~\cite[Corollary p.25]{Cassels} (with $n=d$, $b_n^*=q$ and
$\Lambda=\Z^n$) that the lattice $\Lambda_d^*\left(\bm{p}, q\right)$
has covolume $q$ under the assumption that $\gcd(\bm{p},q)=1$. This is
enough to conclude, as the covolume of a full rank lattice is the
inverse of the covolume of its dual. 
\end{proof}
}

Recall from \S \ref{subsec: lattices} that $\lambda_1(\Lambda)$ and
$\mu(\Lambda)$ denote respectively the first minimum and the 
covering radius of a lattice $\Lambda$. 
We now show:
\begin{lem}\label{lemfirstmindesity}
Assume that the Euclidean length
of the shortest nonzero vector in $\Lambda^*\left(\bm{p},q \right)$
satisfies 
\[ \lambda_1\left(\Lambda^*\left(\bm{p},q \right) \right)
>\; d \cdot \vre^{-1}. \]
Then the sequence $\left(k\cdot \pi\left(\bm{p}/q\right) \right)_{0\le k\le
  q-1}$ is $\vre/2$-dense in $\T^d$. 
\end{lem}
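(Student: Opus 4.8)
The plan is to reinterpret the $\vre/2$-density assertion as a bound on the covering radius of the lattice $\Lambda(\bm{p},q)$, and then to apply a transference inequality relating this covering radius to the first minimum of the dual lattice $\Lambda^*(\bm{p},q)$.

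First, by \eqref{interpretationreseau} the finite sequence $\left(k\cdot\pi(\bm{p}/q)\right)_{0\le k\le q-1}$ is exactly the image $\pi\!\left(\Lambda(\bm{p},q)\right)$ of the lattice $\Lambda(\bm{p},q)=\mathrm{span}_{\Z}\{\bm{p}/q,\bm{e}_1,\dots,\bm{e}_d\}$ under $\pi:\R^d\to\T^d$. Since $\Z^d\subset\Lambda(\bm{p},q)$, for every $\bm{x}\in\R^d$ one has $\dist\!\left(\pi(\bm{x}),\pi(\Lambda(\bm{p},q))\right)=\inf_{\bm{\ell}\in\Lambda(\bm{p},q)}\|\bm{x}-\bm{\ell}\|_{\infty}$, because the $\Z^d$-ambiguity in the choice of representative is already absorbed into $\Lambda(\bm{p},q)$. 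Consequently the sequence is $\vre/2$-dense in $\T^d$ for the metric induced by $\|\cdot\|_{\infty}$ if and only if the covering radius $\mu\!\left(\Lambda(\bm{p},q)\right)$ with respect to $\|\cdot\|_{\infty}$ satisfies $\mu\!\left(\Lambda(\bm{p},q)\right)\le\vre/2$. So it suffices to establish this last inequality.

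The second and main step is the transference estimate
\begin{equation*}
\mu_{\|\cdot\|_{\infty}}\!\left(\Lambda(\bm{p},q)\right)\cdot\lambda_1\!\left(\Lambda^*(\bm{p},q)\right)\;\le\;\frac{d}{2},
\end{equation*}
where $\lambda_1$ still denotes the Euclidean first minimum and one uses, as recorded in the text, that $\Lambda^*(\bm{p},q)$ is the dual lattice of $\Lambda(\bm{p},q)$. One way to obtain it is to quote a transference theorem from the geometry of numbers (in the spirit of \eqref{jarniktransf}): such results bound the covering radius of a lattice in a given norm by a constant times the reciprocal of the first minimum of the dual lattice \emph{in the dual norm}, and for the pair $(\|\cdot\|_{\infty},\|\cdot\|_1)$ this gives $\mu_{\|\cdot\|_{\infty}}(\Lambda)\cdot\lambda_1^{\|\cdot\|_1}(\Lambda^*)\le d/2$; since $\|\bm{u}\|_1\ge\|\bm{u}\|_2$ for every $\bm{u}\in\R^d$, we have $\lambda_1^{\|\cdot\|_1}(\Lambda^*)\ge\lambda_1(\Lambda^*)$. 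Alternatively one can argue directly from the rounding bound $\mu_{\|\cdot\|}(\Lambda)\le\tfrac12\sum_i\|\bm{v}_i\|$, valid for any basis $(\bm{v}_i)$ of $\Lambda$, applied to a carefully chosen (dual-reduced) basis. In either case, invoking the hypothesis $\lambda_1\!\left(\Lambda^*(\bm{p},q)\right)>d\vre^{-1}$ gives $\mu_{\|\cdot\|_{\infty}}\!\left(\Lambda(\bm{p},q)\right)<\dfrac{d/2}{d\vre^{-1}}=\dfrac{\vre}{2}$, which finishes the proof by the first step.

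The reductions in the first two paragraphs are routine; the delicate point is obtaining the transference inequality with the clean constant $d/2$, since the cruder chain ``$\mu(\Lambda)\ll\lambda_d(\Lambda)$ and $\lambda_d(\Lambda)\ll\lambda_1(\Lambda^*)^{-1}$'' loses powers of $d$ and would only yield the conclusion under a stronger hypothesis such as $\lambda_1(\Lambda^*)\gg d^2\vre^{-1}$. If only the qualitative statement were wanted (density whenever $\lambda_1(\Lambda^*)$ is large enough), any transference theorem would suffice; it is precisely the choice of the sharp covering-radius/dual-first-minimum transference, measuring the dual lattice in the dual norm, that produces the constant $d$ appearing in the statement.
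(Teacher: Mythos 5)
Your structural reduction is the same as the paper's: identify $\bigl(k\,\pi(\bm{p}/q)\bigr)_{0\le k\le q-1}$ with $\pi(\Lambda(\bm{p},q))$, observe that because $\Z^d\subset\Lambda(\bm{p},q)$ the $\vre/2$-density in $\T^d$ is equivalent to the bound $\mu_{\infty}(\Lambda(\bm{p},q))\le \vre/2$ on the sup-norm covering radius, and then appeal to transference between $\mu$ and $\lambda_1$ of the dual. That part is fine.

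The gap is in the transference step. You assert, as the main route, the inequality $\mu_{\|\cdot\|_\infty}(\Lambda)\cdot\lambda_1^{\|\cdot\|_1}(\Lambda^*)\le d/2$, attributing it to "a transference theorem from the geometry of numbers" for the dual-norm pair $(\|\cdot\|_\infty,\|\cdot\|_1)$, and this is not a standard result with that constant. The bound $\mu(\Lambda)\cdot\lambda_1(\Lambda^*)\le d/2$ of Banaszczyk \cite[Thm.~2.2]{bana}, which the paper cites, is specific to the Euclidean norm on both sides; for general dual-norm pairs the best general transference constants are of order $d\log d$, and I am not aware of a $d/2$ bound proved for the cube/cross-polytope pair. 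Your claim is in fact genuinely stronger than what is needed, and it does not follow from the paper's route (since passing from $\lambda_1^{(2)}$ to the larger $\lambda_1^{(1)}$ only weakens the bound $\mu_\infty\le (d/2)/\lambda_1^{(2)}$, not strengthens it). Your suggested alternative via the rounding bound $\mu_{\|\cdot\|}(\Lambda)\le \tfrac12\sum_i\|\bm{v}_i\|$ for a "dual-reduced basis" is also left undeveloped, and one would have to check it delivers exactly $d/2$. The paper's route is both simpler and clearly correct: it applies Euclidean Banaszczyk to get $\mu_2(\Lambda(\bm{p},q))<\vre/2$ from the hypothesis $\lambda_1(\Lambda^*(\bm{p},q))>d\vre^{-1}$, and then uses $\|\bm{x}\|_\infty\le\|\bm{x}\|_2$, i.e.\ $\mu_\infty\le\mu_2$, to conclude; no non-Euclidean transference is needed. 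You should replace your dual-norm transference by this two-line argument, or else supply a proof or precise citation for the inequality $\mu_\infty\cdot\lambda_1^{\|\cdot\|_1}\le d/2$.
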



\begin{proof}
A well-known result of Banaszczyk \cite[Thm. 2.2]{bana},
asserts that 
for any lattice $\Lambda \subset \R^d$, $\mu(\Lambda) \cdot
\lambda_1(\Lambda^*) \leq d/2$ (we note that weaker results had been
known for some time, and these could also be used in our context,
at the sole expense of requiring a change in the constants appearing in \eqref{defcd}). 
Since $\Lambda(\bm{p},q)$
contains $\Z^d$, the sequence \eqref{interpretationreseau} will be
$\vre/2$-dense in $\T^d$ (with respect to the sup-norm) provided the
sup-norm covering radius of $\Lambda(\bm{p},q)$ is at most
$\vre/2$. Thus the Lemma follows immediately from Banaszczyk's bound
and the bound $\|\bm{x}\| \leq \|\bm{x}\|_2$. 
%
\end{proof}

We will need a further transference result
(see~\cite[Theorem~II, Chap.~V]{Casselsbis} for a proof): 
\begin{lem}[Mahler's Transference Theorem]\label{Mahler}
Let $\bm{\xi} \in \R^d$ and 
assume that
there is a nonzero integer $q$ such that  
$$\left\langle q\bm{\xi}\right\rangle_{\Z^d}\le C \quad \textrm{and}
\quad |q| \le U,$$ 
for real parameters $C$ and $U$ satisfying $0<C<1\le U$.
Then there is $\bm{v}\in\Z^d \setminus \{\bm{0}\}$ such that 
$$\left\langle \bm{\xi} \cdot \bm{v}\right\rangle \le D \quad
\textrm{and} \quad \left\|\bm{v}\right\| \le V,$$ 
where
$$D=d U^{-(d-1)/d}C, \quad V=dU^{1/d}.$$ 
\end{lem}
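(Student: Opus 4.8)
The plan is to deduce the statement from Minkowski's convex body theorem, exploiting the fact that a good simultaneous rational approximation of $\bm\xi$ automatically produces integer relations for $\bm\xi$. First I would make two harmless reductions: replacing $q$ by $-q$ we may assume $q\ge 1$, and if $D\ge 1/2$ the conclusion is immediate (any nonzero $\bm v\in\Z^d$ with $\|\bm v\|\le V$ does the job, since $V=dU^{1/d}\ge 1$ and $\langle\,\cdot\,\rangle$ never exceeds $1/2$), so I may assume $D<1/2$. The hypothesis $\langle q\bm\xi\rangle_{\Z^d}\le C$ gives $\bm p\in\Z^d$ with $\bm\eta:=q\bm\xi-\bm p$ of sup-norm at most $C$. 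The relevant lattice is the annihilator $\Lambda^*(\bm p,q)=\{\bm u\in\Z^d:\bm p\cdot\bm u\equiv 0\ (\mathrm{mod}\ q)\}$ from just before Lemma~\ref{Pproponewlattice}: being the kernel of the homomorphism $\bm u\mapsto\bm p\cdot\bm u\bmod q$ from $\Z^d$ to $\Z/q\Z$, it has index at most $q$ in $\Z^d$, so $\covol\bigl(\Lambda^*(\bm p,q)\bigr)\le q\le U$; and for any $\bm v\in\Lambda^*(\bm p,q)$ one has $\bm p\cdot\bm v/q\in\Z$, so from $\bm\xi\cdot\bm v=\frac{\bm p\cdot\bm v}{q}+\frac{\bm\eta\cdot\bm v}{q}$ we get $\langle\bm\xi\cdot\bm v\rangle\le|\bm\eta\cdot\bm v|/q$.

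Thus it is enough to find $\bm v\in\Lambda^*(\bm p,q)\setminus\{\bm 0\}$ lying in the closed bounded symmetric convex body $\mathcal B=\{\bm x\in\R^d:\|\bm x\|_2\le V\}\cap\{\bm x\in\R^d:|\bm\eta\cdot\bm x|\le qD\}$: for such a $\bm v$ one has $\|\bm v\|_\infty\le\|\bm v\|_2\le V$ and $\langle\bm\xi\cdot\bm v\rangle\le qD/q=D$, which is exactly what is wanted. By Minkowski's convex body theorem (in the form valid for closed bodies), such a $\bm v$ exists provided $\Vol(\mathcal B)\ge 2^d\covol\bigl(\Lambda^*(\bm p,q)\bigr)$, for which it suffices to prove $\Vol(\mathcal B)\ge 2^d q$.

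The heart of the argument is this last volume bound. If $\bm\eta=\bm 0$, then $\mathcal B$ is a Euclidean ball of radius $V$ and the bound is clear. In general, writing $\bm u=\bm\eta/\|\bm\eta\|_2$ and $h=\min\{V,\,qD/\|\bm\eta\|_2\}$, the two cones with common base the central slice $\mathcal B\cap\bm u^\perp$ — a $(d-1)$-dimensional ball of radius $V$ — and apexes $\pm h\bm u$ are contained in $\mathcal B$ and have disjoint interiors, so $\Vol(\mathcal B)\ge \frac{2h}{d}\cdot\bigl(\text{volume of a }(d-1)\text{-ball of radius }V\bigr)$. Since $\|\bm\eta\|_2\le\sqrt d\,C$, the quantity $h$ is at least $\min\{V,\,qD/(\sqrt d\,C)\}$, and substituting $V=dU^{1/d}$, $D=dCU^{-(d-1)/d}$ and using $q\le U$, one checks by a direct computation — in which the powers of $U$ cancel and the numerical factor coming from the volume of the $(d-1)$-ball works out in our favour — that $\Vol(\mathcal B)\ge 2^d q$ (the case $d=1$ being trivial anyway, with $\bm v=q$).

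I expect the main obstacle to be precisely this volume estimate, and in particular getting the numerical constants to match the stated $V$ and $D$: applying Minkowski to the ball or to the slab separately is far too weak, and one must use that their intersection still contains a \emph{full} $(d-1)$-dimensional central slice of radius $V$; the decisive point is that the factor $q$ cancels between $\covol(\Lambda^*(\bm p,q))\le q$ and the half-thickness $qD/\|\bm\eta\|_2$ of the slab, which is what makes the conclusion uniform in the size of $q$. One could instead work with the sup-norm cube $\{\|\bm x\|_\infty\le V\}$ in place of the ball, at the cost of invoking the classical fact that every central hyperplane section of a cube has $(d-1)$-volume at least that of a facet; and one can recognize the whole argument as the ``easy half'' of Khintchine's transference principle, i.e. an instance of Mahler's inequalities relating a symmetric convex body to its polar — although that route, carrying a factor $(d+1)!$, would not yield the sharp constant in the statement.
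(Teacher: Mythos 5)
Your argument is correct, but note that the paper does not prove this lemma at all: it quotes Mahler's transference theorem from Cassels (Chap.~V, Theorem~II), where it is deduced from the general duality theorem for polar systems of linear forms. Your route is different and self-contained: you pass to an integer approximation $\bm p$ of $q\bm\xi$, work with the annihilator lattice $\Lambda^*(\bm p,q)=\{\bm u\in\Z^d:\bm p\cdot\bm u\equiv 0\ (\mathrm{mod}\ q)\}$ of covolume at most $q$, and apply Minkowski's theorem to the ball--slab intersection $\mathcal B$, bounding $\Vol(\mathcal B)$ from below by the double cone over the central slice. I checked the volume computation you only sketch, and it does close: with $\omega_{d-1}$ the volume of the unit $(d-1)$-ball one gets $\Vol(\mathcal B)\ge \frac{2h}{d}\,\omega_{d-1}V^{d-1}$ with $h=\min\{V,\,qD/\|\bm\eta\|_2\}$, and the two cases require respectively $\omega_{d-1}\ge (2/d)^{d-1}$ (here $q\le U$ is used) and $\omega_{d-1}\ge \sqrt{d}\,(2/d)^{d-1}$ (here the powers of $U$ and the factor $q$ cancel); both follow for $d\ge 2$ from the cube $[-1/\sqrt{d-1},1/\sqrt{d-1}]^{d-1}$ inscribed in the unit ball, and $d=1$ is trivial as you say. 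What your approach buys is an elementary, tailor-made proof exploiting that the dual problem comes from a single good denominator $q$, rather than invoking the full transference machinery; what it costs is exactly this constant-chasing, which you should write out since it is the only place the specific values $D=dU^{-(d-1)/d}C$ and $V=dU^{1/d}$ are actually tested. One small correction to your closing remark: the polar-body route as carried out in Cassels yields precisely the constants of the statement (that is where the paper takes them from), so it is not the case that transference only gives a $(d+1)!$-type loss; the weaker constant you have in mind comes from the cruder form of Khintchine's principle, not from Mahler's theorem itself.
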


\begin{proof}[Proof of Proposition~\ref{propreduc}]
  Let  $\bm{\xi}\in C_d\left(\vre, M \right)$, where 
$M$ satisfies~\eqref{contrainteM}. From 
  Lemma~\ref{leminclusion}, there exist $\bm{p}\in\Z^d$ and $q\ge 1$
  such that 
  \begin{equation}\label{eq: 6.1 substitute}
  \left\| q\bm{\xi}-\bm{p}\right\|\le M^{-1/d}
  \end{equation}
  and such that~\eqref{reducratio} holds. 

Assume first that $q< \vre^{-d}$.
Lemma~\ref{Mahler}, applied with the parameters 
$$
 C=\frac{1}{M^{1/d}} \ \text{ and }  \ U=\vre^{-d},
$$ 
yields the existence of 
$\bm{v}\in\Z^d$ such that 
\[
 \left\langle \bm{\xi\cdot v}\right\rangle\; \le \; d\cdot \frac{\vre^{d-1}}{M^{1/d}}\qquad \textrm{ and } \qquad 
1\; \le \; \left\|\bm{v}\right\|\; \le \; d\cdot\vre^{-1} .
\]
In particular, $\bm{\xi}\in S_d(\vre,M)$. Assume now that 
\begin{equation}\label{secondcondiition}
q  \ge
\vre^{-d}. 
\end{equation}
Since $\bm{p}/q\in C_d(\frac{\vre}{2},q)$, 
Lemma~\ref{lemfirstmindesity} implies the existence of 
$\bm{u}\in\Lambda^*(\bm{p},q) \subset \Z^d$ with  
\begin{equation}\label{normubound}
1\; \le \; \left\|\bm{u}\right\| \; \le \; \left\|
 \bm{u}\right\|_2\; \le \; d\cdot \vre^{-1}, 
\end{equation}
and hence  such that, by \eqref{eq:Dual_lattice},
$$\frac{\bm{p}}{q}\bm{\cdot u}=k$$
 for some integer $k$. Then by the Cauchy-Schwarz inequality, 
 \begin{align*}
 \left\langle \bm{\xi\cdot u} \right\rangle \; & \le \;
                                                 \left|\bm{\xi\cdot
                                                 u}-k \right| \; = \;
                                                 \left|\left(\frac{\bm{p}}{q}-\bm{\xi}
                                                 \right) \bm{\cdot u}
                                                 \right| \nonumber \\ 
 &\le \; \left\| \frac{\bm{p}}{q}-\bm{\xi}\right\|_2 \cdot \left\|
   \bm{u}\right\|_2 \nonumber \\ 
 & \underset{\eqref{eq: 6.1 substitute} , \eqref{normubound}}{\le} \;
                                         \frac{\sqrt{d}}{qM^{1/d}}\cdot
                                         d\cdot \vre^{-1} \nonumber
   \\ 
  & \underset{\eqref{secondcondiition} }{\le} \; 
    d^{3/2}\cdot\frac{\vre^{d-1}}{M^{1/d}}. \label{equadefinitive} 
 \end{align*}
%
%
By \eqref{defcd},  $\bm{\xi}\in S_d(\vre,M)$ and the proof of Proposition \ref{propreduc} is complete. 
\end{proof}

\subsection{An explicit uniformly discrete dense forest}\label{subsec:
  three lattice construction} 
In this section we prove Theorem \ref{thm: finitely many lattices}. We
will work with a variant of the set $\mathfrak{F}(\bm{\Theta}_{s,d})$, which
can be analyzed in a similar way. Let $\alpha, \beta, \gamma,
\delta$ be nonzero real numbers satisfying the
following conditions:

\begin{itemize}
\item[(i)] $
  \frac{\gamma}{\delta(\alpha + \gamma)} \in \Q$. 
\item[(ii)] $(\alpha+ \gamma)\, (\beta + \delta)=1$. 
\item[(iii)] 
For any $\eta>0$ there is $c>0$ such that for any integers $P, Q$, not
both zero, 
$$
\langle P \alpha + Q \gamma \rangle \geq c \max\{|P|,|Q|\}^{-(2+\eta)}
$$
and 
$$
\langle P \beta + Q \delta \rangle \geq c \max\{|P|,|Q|\}^{-(2+\eta)}. 
$$

\end{itemize} 
Now define 
$$
\Lambda_1 = \Z^2, \ \ \Lambda_2 = \left(\begin{matrix} \gamma &
    \alpha \\ 0 & 1 \end{matrix}
\right) \cdot \Z^2, \ \ \Lambda_3 = \left(\begin{matrix} 1 & 0 \\
    \beta & \delta\end{matrix}
\right) \cdot \Z^2.
$$
Theorem \ref{thm: finitely many lattices} follows from the following
statements:

\begin{prop}\label{prop: for three lattices1}
Assuming \rm{(i)}, \rm{(ii)}, there are $\bm{x}_2, \bm{x}_3 \in \R^2$ such that $\Lambda_1 \cup
(\bm{x}_2 + 
\Lambda_2) \cup (\bm{x}_3 + \Lambda_3)$ is uniformly discrete. 
\end{prop}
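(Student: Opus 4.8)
The plan is to invoke Proposition~\ref{prop: uniformly discrete unions}: it suffices to check that for each pair $i \neq j$ among $\{1,2,3\}$, the closure $\overline{L_i - L_j}$ is a proper subgroup of $\R^2$, where $L_1 = \Lambda_1 = \Z^2$, $L_2 = \left(\begin{smallmatrix} \gamma & \alpha \\ 0 & 1\end{smallmatrix}\right)\Z^2$ and $L_3 = \left(\begin{smallmatrix} 1 & 0 \\ \beta & \delta\end{smallmatrix}\right)\Z^2$. Since $L_i - L_j = L_i + L_j$ (as each $L_i$ is a group) and $L_j - L_i$ has the same closure, there are three pairs to examine: $(1,2)$, $(1,3)$ and $(2,3)$. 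Once all three closures are proper, Proposition~\ref{prop: uniformly discrete unions} produces the required translations $\bm{x}_2, \bm{x}_3$ (indeed almost every choice works, by the remark following \eqref{eq: ij satisfy}).

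For the pair $(1,3)$: the group $L_1 + L_3 = \Z^2 + \left(\begin{smallmatrix} 1 & 0 \\ \beta & \delta\end{smallmatrix}\right)\Z^2$ is generated over $\Z$ by $\bm{e}_1, \bm{e}_2$ and $(1,\beta)^T, (0,\delta)^T$; modulo $\Z^2$ this is the subgroup of $\T^2$ generated by $\pi(0,\beta)^T$ and $\pi(0,\delta)^T$, which is contained in $\{0\}\times \T$. Hence $\overline{L_1+L_3} \subset \Z^2 + (\{0\}\times\R) = \R \bm{e}_2 + \Z\bm{e}_1$, a proper closed subgroup. The pair $(1,2)$ is symmetric in structure: $L_1 + L_2 = \Z^2 + \left(\begin{smallmatrix} \gamma & \alpha \\ 0 & 1\end{smallmatrix}\right)\Z^2$ reduces mod $\Z^2$ to the subgroup of $\T^2$ generated by $\pi(\gamma, 0)^T$ and $\pi(\alpha, 0)^T$, which lies in $\T\times\{0\}$; therefore $\overline{L_1 + L_2} \subset \R\bm{e}_1 + \Z\bm{e}_2 \subsetneq \R^2$. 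Neither of these two arguments uses hypotheses (i)--(iii) at all; they only use the shape of the matrices defining $\Lambda_2,\Lambda_3$.

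The pair $(2,3)$ is the one that genuinely uses the arithmetic hypotheses, and I expect it to be the main point. Here $L_2 + L_3$ is generated over $\Z$ by the four columns $(\gamma,0)^T, (\alpha,1)^T, (1,\beta)^T, (0,\delta)^T$. The cleanest route is to find a rational linear functional $\varphi(x,y) = a x + b y$ with $(a,b) \in \Q^2\setminus\{0\}$ that takes \emph{rational} values on all four generators; then $\varphi(L_2+L_3) \subset \tfrac{1}{m}\Z$ for some integer $m \geq 1$, so $L_2 + L_3 \subset \varphi^{-1}(\tfrac1m\Z)$, a proper closed subgroup (a countable union of parallel lines), and in particular $\overline{L_2+L_3} \neq \R^2$. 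We need $a\gamma \in \Q$, $a\alpha + b \in \Q$, $a + b\beta \in \Q$, $b\delta \in \Q$. Solving formally, set $a = \gamma^{-1}$ — wait, that need not be rational; instead one should not demand the individual products be rational but rather exploit (i) and (ii). Concretely, using (ii) write $\beta + \delta = (\alpha+\gamma)^{-1}$, and using (i) write $\tfrac{\gamma}{\delta(\alpha+\gamma)} \in \Q$, i.e. $\gamma = r\,\delta(\alpha+\gamma)$ for some $r \in \Q$. A short computation then shows the $2$-dimensional $\R$-span of $L_2 + L_3$ collapses: the four generators satisfy a nontrivial rational linear relation, equivalently there is $(a,b)\in\Q^2\setminus\{0\}$ annihilating the $\R$-span modulo $\Q^2$ — this is precisely what (i) and (ii) are engineered to guarantee. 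I would carry this out by computing the $2\times 2$ determinants of pairs of generators, expressing them via (i) and (ii), and checking that the rationality forces $L_2+L_3$ into a proper closed subgroup. The remaining hypothesis (iii) is a Diophantine (badly-approximable-type) condition that plays no role in this proposition — it is needed only for the visibility bound in the companion proposition — so I would note that and not use it here. Once all three closures are proper, Proposition~\ref{prop: uniformly discrete unions} finishes the proof.
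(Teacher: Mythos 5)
Your reduction to Proposition~\ref{prop: uniformly discrete unions} and the handling of the pairs $(1,2)$ and $(1,3)$ are exactly right, and you are also right that hypothesis (iii) is irrelevant here. However, the crucial pair $(2,3)$ is left incomplete, and the two conceptual frames you offer for it both miss the actual mechanism. Your first attempt (find a \emph{rational} linear functional $\varphi(x,y)=ax+by$ with $(a,b)\in\Q^2$ taking rational values on all four generators) fails, as you yourself note: the generators involve $\gamma,\alpha,\beta,\delta$ which are irrational, so no nonzero rational $\varphi$ sends them all into $\Q$. Your second idea --- ``the four generators satisfy a nontrivial rational linear relation, i.e.\ there is $(a,b)\in\Q^2\setminus\{0\}$ annihilating the $\R$-span modulo $\Q^2$'' --- is not what hypotheses (i) and (ii) give, and in fact is vacuous for four vectors in $\R^2$ in a way that doesn't help.

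What the paper actually does for $(2,3)$ is project onto an \emph{irrational} line. Observe that $\Lambda_2+\Lambda_3$ is generated over $\Z$ by $(\gamma,0)^T$, $(\alpha+\gamma,1)^T$, $(1,\beta+\delta)^T$, $(0,\delta)^T$. Hypothesis (ii) says $\det\left(\begin{smallmatrix}\alpha+\gamma & 1\\ 1 & \beta+\delta\end{smallmatrix}\right)=0$, so the middle two generators are collinear; let $\ell$ be the line perpendicular to their common direction $(\alpha+\gamma,1)^T$ (note $\ell$ has irrational slope in general). Projecting $\Lambda_2+\Lambda_3$ onto $\ell$, those two generators map to $0$, and the remaining generators $(\gamma,0)^T$ and $(0,\delta)^T$ map to nonzero vectors whose ratio is $-\gamma/(\delta(\alpha+\gamma))$, which is rational by (i). Rationality of the ratio means their $\Z$-span on $\ell$ is a cyclic (hence discrete) subgroup, so the projection of $\Lambda_2+\Lambda_3$ onto $\ell$ is not dense, and therefore $\overline{\Lambda_2+\Lambda_3}\neq\R^2$. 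So the gap in your proposal is that you never identify this line $\ell$, never use (ii) to collapse two generators, and never use (i) in the form of a rational \emph{ratio of projections}; those are the actual steps that make the $(2,3)$ case go through.
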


\begin{prop}\label{prop: for three lattices2}
Assuming \rm{(iii)}, for any $\eta>0$ there is $c>0$ such that for any $\vre>0$, any
$\bm{x} \in \R^2$, any $M > c/\vre^{5+\eta}$, and any slope 
$\sigma \in [-1,1]$, 
the set $\Lambda_1 
\cup (\bm{x} + \Lambda_2)$ comes within $\vre$ of any `nearly
vertical' line segment 
$$
\{\bm{y} + t \bm{u} : t \in [0, M]\}, \ \ \text{ where } \bm{u} =
(\sigma, 1)^T.
$$  
A similar statement holds replacing $\Lambda_2$ with $\Lambda_3$ and
$\bm{u}$ with $(1,\sigma)^T$ (that is, $\Lambda_1 \cup (\bm{x} + \Lambda_3)$
comes $\vre$-close to `nearly horizontal' segments). 
\end{prop}

\begin{prop}\label{prop: third}
There are examples of numbers $\alpha, \beta, \gamma, \delta$
satisfying hypotheses \rm{(i)}--\rm{(iii)}. 
For instance, one can define 
$$\alpha \df \sqrt{2}, \ \
\beta \df 3-\sqrt{2}+\sqrt{3}-\sqrt{6}, \ \ \gamma \df \sqrt{3}, \ \ \delta
\df -3 + \sqrt{6}.
$$ 
\end{prop}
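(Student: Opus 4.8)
The plan is to verify the three conditions (i)--(iii) for the explicit numbers $\alpha = \sqrt 2$, $\beta = 3 - \sqrt 2 + \sqrt 3 - \sqrt 6$, $\gamma = \sqrt 3$, $\delta = -3 + \sqrt 6$. The first two conditions are elementary algebraic identities, and the bulk of the work is in condition (iii), which is a statement in simultaneous Diophantine approximation about the two linear forms $P\alpha + Q\gamma$ and $P\beta + Q\delta$.

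First I would dispose of (i) and (ii). For (ii), compute $\alpha + \gamma = \sqrt 2 + \sqrt 3$ and $\beta + \delta = 3 - \sqrt 2 + \sqrt 3 - \sqrt 6 - 3 + \sqrt 6 = \sqrt 3 - \sqrt 2$, so $(\alpha+\gamma)(\beta+\delta) = (\sqrt 3 + \sqrt 2)(\sqrt 3 - \sqrt 2) = 3 - 2 = 1$, as required. For (i), compute $\gamma/(\delta(\alpha+\gamma)) = \sqrt 3 / ((-3+\sqrt 6)(\sqrt 2 + \sqrt 3))$. Rationalizing, $(-3 + \sqrt 6)(\sqrt 2 + \sqrt 3) = -3\sqrt 2 - 3\sqrt 3 + \sqrt{12} + \sqrt{18} = -3\sqrt 2 - 3\sqrt 3 + 2\sqrt 3 + 3\sqrt 2 = -\sqrt 3$, so the ratio equals $\sqrt 3/(-\sqrt 3) = -1 \in \Q$. (This also shows the numbers were reverse-engineered precisely so that $\delta(\alpha+\gamma) = -\gamma$.)

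The main work is (iii). The key observation is that $\alpha$ and $\gamma$ lie in the real quadratic field $\Q(\sqrt 6)$: indeed $\alpha\gamma = \sqrt 6$, and one checks $\sqrt 2, \sqrt 3 \in \Q(\sqrt 6)$ is false --- rather, $1, \sqrt 2, \sqrt 3, \sqrt 6$ span the biquadratic field $K = \Q(\sqrt 2, \sqrt 3)$ of degree $4$. So the right framework is: for fixed integers $P, Q$ not both zero, the number $\xi := P\alpha + Q\gamma = P\sqrt 2 + Q\sqrt 3$ is a nonzero element of $K$, and we want a lower bound on $\langle \xi \rangle$, the distance from $\xi$ to $\Z$, in terms of $\max\{|P|,|Q|\}$. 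Write $\langle \xi \rangle = |\xi - k|$ for the nearest integer $k$; then $\xi - k = P\sqrt 2 + Q\sqrt 3 - k$ is a nonzero algebraic integer in $K$ (nonzero since $\sqrt 2, \sqrt 3$ are irrational over $\Q$ and $1, \sqrt 2, \sqrt 3$ are $\Q$-linearly independent). Its conjugates over $\Q$ are obtained by the Galois group sending $\sqrt 2 \mapsto \pm\sqrt 2$, $\sqrt 3 \mapsto \pm \sqrt 3$, so the four conjugates are $\pm P\sqrt 2 \pm Q\sqrt 3 - k$, each of absolute value $\ll \max\{|P|,|Q|,|k|\} \ll \max\{|P|,|Q|\}$ (the last step because $|k| \le |\xi| + 1 \ll \max\{|P|,|Q|\}$). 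Since the norm $N_{K/\Q}(\xi - k)$ is a nonzero rational integer, $|N_{K/\Q}(\xi-k)| \ge 1$, and this norm is the product of the four conjugates; bounding the three "other" conjugates from above by $\ll \max\{|P|,|Q|\}$ gives
$$
\langle \xi \rangle = |\xi - k| \;\ge\; \frac{1}{\prod_{\sigma \ne \mathrm{id}}|\sigma(\xi - k)|} \;\gg\; \max\{|P|,|Q|\}^{-3},
$$
which is even stronger than the $(2+\eta)$ exponent demanded. The identical argument applies to $P\beta + Q\delta$: one has $\beta + \delta$-data living in $K$ as well, since $\beta = 3 - \sqrt 2 + \sqrt 3 - \sqrt 6$ and $\delta = -3 + \sqrt 6$ both lie in $K$, so $P\beta + Q\delta \in K$, its nearest-integer difference is a nonzero algebraic integer in $K$ with conjugates of size $\ll \max\{|P|,|Q|\}$, and the same norm argument yields $\langle P\beta + Q\delta\rangle \gg \max\{|P|,|Q|\}^{-3}$. (One should double-check the difference is nonzero: $P\beta + Q\delta - k = -(P+Q)\sqrt 6 + P\sqrt 3 - P\sqrt 2 + (3P - 3Q - k)$; if this vanished then by $\Q$-linear independence of $1, \sqrt 2, \sqrt 3, \sqrt 6$ we'd need $P = 0$, $P + Q = 0$, forcing $P = Q = 0$.)

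The step I expect to require the most care is not any single estimate but the bookkeeping: confirming that the two linear forms in (iii) both genuinely land in the single degree-$4$ field $K = \Q(\sqrt 2, \sqrt 3)$ and that the relevant nearest-integer differences are nonzero algebraic integers, so that $|N_{K/\Q}(\cdot)| \ge 1$ is legitimate. Once that is in place, the conjugate-size bounds and the norm inequality are routine, and in fact give exponent $3 < 2 + \eta$ is false --- note the bound obtained is $\gg \max\{|P|,|Q|\}^{-3}$ whereas (iii) asks for the weaker $\gg \max\{|P|,|Q|\}^{-(2+\eta)}$; since $3 > 2 + \eta$ for small $\eta$ the algebraic-number bound is in the wrong direction, so instead one should note that for these specific forms a sharper argument is available. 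The cleanest route is to observe that $\sqrt 2 + \sqrt 3$ and $\sqrt 3 - \sqrt 2$ are units in the real quadratic order $\Z[\sqrt 6]$ (their product is $1$), and reduce (iii) to the classical fact that the linear form $m\sqrt 6$ in one variable $m$ satisfies $\langle m \sqrt 6 \rangle \gg |m|^{-1}$ because $\sqrt 6$ is badly approximable (it has a periodic continued fraction expansion $[2;\overline{2,4}]$); I would chase through the change of variables $(P, Q) \mapsto$ a single integer parameter forced by the algebraic relations, so that the two-variable problem collapses to the one-variable badly-approximable estimate, yielding exactly the stated bound (with room to spare). This reduction is the part of the argument I would write out most carefully.
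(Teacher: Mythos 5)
Your verifications of (i) and (ii) are correct, and you rightly notice that the Galois norm argument only gives $\langle P\sqrt 2 + Q\sqrt 3\rangle \gg \max\{|P|,|Q|\}^{-3}$, which is \emph{weaker} than the exponent $2+\eta$ demanded by (iii) (a lower bound decaying like $X^{-3}$ does not imply one decaying like $X^{-(2+\eta)}$). That is a genuine catch. But the alternative you sketch to rescue it does not work, for two reasons. First, $\sqrt 2 + \sqrt 3$ and $\sqrt 3 - \sqrt 2$ are not elements of $\Z[\sqrt 6]$ at all: $\sqrt 2 + \sqrt 3$ has minimal polynomial $x^4 - 10x^2 + 1$ and generates the degree-$4$ field $\Q(\sqrt 2,\sqrt 3)$, so it cannot be a unit of a quadratic order. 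Second, and more fundamentally, $P\sqrt 2 + Q\sqrt 3$ is a linear form in \emph{two} free integer parameters, and there is no change of variables ``forced by algebraic relations'' that collapses it to a single integer multiple of $\sqrt 6$; the values $P\sqrt 2 + Q\sqrt 3$ as $(P,Q)$ ranges over $\Z^2$ are dense, not contained in any one-parameter orbit. So the step you flag as the one to ``write out most carefully'' is an unfillable gap.

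The paper's proof takes a different route for (iii): it observes that $1,\sqrt 2,\sqrt 3$ (respectively $1,\beta,\delta$) are linearly independent over $\Q$ and are algebraic, and then invokes Schmidt's theorem on simultaneous approximation to algebraic numbers (Corollary 1E on p.~152 of Schmidt's \emph{Diophantine Approximation}), which asserts precisely that for $\Q$-linearly independent algebraic $1,\alpha_1,\ldots,\alpha_n$ and any $\eta>0$ one has $|q_1\alpha_1+\cdots+q_n\alpha_n - p| \gg (\max_i|q_i|)^{-n-\eta}$. With $n=2$ this yields the exponent $2+\eta$. This result lies deeper than norm estimates or one-variable continued-fraction arguments --- it is essentially a consequence of the subspace theorem --- and is the crucial input the proof needs. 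To repair your proposal, replace the unit/collapse step with an appeal to Schmidt's theorem, after checking the $\Q$-linear independence of $1,\sqrt 2,\sqrt 3$ and of $1,\beta,\delta$, the latter being immediate from the coordinates of $\beta$ and $\delta$ in the basis $1,\sqrt 2,\sqrt 3,\sqrt 6$.
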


\begin{proof}[Proof of Proposition \ref{prop: for three lattices1}]
In light of Proposition \ref{prop: uniformly discrete unions}, it is
enough to show that none of the three sets 
$$\Lambda_1 + \Lambda_2, \ \Lambda_1 +
\Lambda_3, \ \Lambda_2 + \Lambda_3$$ 
are dense in $\R^2$. This is clear for $\Lambda_1 + \Lambda_2$
(respectively, 
$\Lambda_1 + \Lambda_3$), since the second (resp. first) coordinate of
any vector in this set is an integer. For $\Lambda_2 + \Lambda_3$ we
note that by (ii), 
$$
\det \left(\begin{matrix} \alpha + \gamma & 1 \\ 1 & \delta +
    \beta \end{matrix}  \right) 
    = 0,
$$
and hence the two vectors $\left(\alpha + \gamma, 1\right)^T \in \Lambda_2, \
\left(1, \delta + \beta \right)^T \in \Lambda_3$ are
collinear. Let $\ell$ denote the line perpendicular to $\left(\alpha + \gamma, 1\right)^T$. Note that $\bm{u}_2 = (\gamma, 0)^T \in \Lambda_2$ and
$\bm{u}_3 = (0, \delta)^T \in \Lambda_3$. Then the following
calculation shows that the projections of $\bm{u}_2, \bm{u}_3$ onto $\ell$ are nonzero and
commensurable:
$$
\frac{\bm{u}_2 \cdot \left( -1, \alpha+\gamma\right)^T}{\bm{u}_3\cdot
  \left( -1, \alpha+\gamma\right)^T} = \frac{-\gamma}{\delta(\alpha +
  \gamma)} 
\in \Q. 
$$

This implies that the projection of $\Lambda_2 + \Lambda_3$ onto $\ell$ is not
dense and in particular $\overline{\Lambda_2 + \Lambda_3} \neq \R^2$. 
\end{proof}

\begin{proof}[Proof of Proposition \ref{prop: for three lattices2}]
We work with $\Lambda_1 \cup (\bm{x}+ \Lambda_3)$ and `nearly
horizontal' segments, the proof for nearly vertical segments being
similar. 
Let 
$$\mathcal{L} = \left\{ \ell(t) : t \in [0, M] \right \},  \
\text{ where } \ell(t) = (y_1 +t, y_2 + \sigma t)^T \in \R^2,$$ with $$
y_1, y_2, \sigma \in \R \
\text{ and } \ |\sigma| \leq 1. $$
Also let $\bm{x} = (x_1, x_2)^T$. 

As we saw in the proof of Proposition \ref{prop: have shown}, if
\begin{equation}\label{eq: one density}
( m \cdot \pi\left(\sigma \right))_{1 \leq m \leq M} \text{ is } \vre\text{-dense in }\T^1
\end{equation}
then
$\Lambda_1$ comes $\vre$-close to $\mathcal{L}$. 
By a similar argument, if 
\begin{equation}\label{eq: second density}
\left(
m\cdot \pi \left(\frac{\sigma-\beta}{\delta}\right)\right)_{1 \leq m \leq
M}\text{ is } \frac{\vre}{\delta}\text{-dense in }\T^1
\end{equation} 
then $\bm{x}+\Lambda_3$ comes $\vre$-close to $\mathcal{L}$. 
Indeed, setting 
$t_m \df m - \{y_1- x_1\}$ (where $\{ x\}$ denotes the fractional part of $x\in\R$), $j=j_m= m+\lfloor y_1-x_1\rfloor$ (where $\lfloor x\rfloor$ denotes the integer part of $x\in\R$), we have that
$$\ell(t_m) = \left( \begin{matrix} y_1 -\{y_1- x_1 \} + m \\ y_2 - \sigma \{y_1 -
x_1 \}+ \sigma m \end{matrix}\right)= \left(\begin{matrix} x_1 + j \\
y_2 - \sigma \{y_1 - 
x_1 \}+ \sigma m \end{matrix} \right)$$
is $\vre$-close to 
$$
\bm{x}+\Lambda_3 = \left\{\left( \begin{matrix} x_1 + j \\ x_2 + j\beta +
    k\delta\end{matrix} \right): j,k \in \Z \right\}
$$
when 
$$\left\langle \frac{1}{\delta} \left(y_2 - \sigma \{y_1 - x_1\} -x_2 -
    \beta \lfloor y_1-x_1 \rfloor\right)+m
    \left(\frac{\sigma-\beta}{\delta} \right) \right \rangle < \frac{\vre}{\delta}.$$

  So it remains to show that for $M > c/\vre^5$, for any $\sigma \in \R$,
  at least one of \equ{eq: one density}, \equ{eq: second density}
  holds. If not, then by Lemma \ref{leminclusion} there are $q_1,
  q_2 \in \Z$ with $1 \leq |q_1|\le (\epsilon/2)^{-1}$,  $1\le 
  |q_2| \leq (\vre/2\delta)^{-1}$ and $p_1, p_2 \in \Z$ such that 
$$| q_1\sigma - p_1|< \frac{1}{M} \ \ \text{ and } \ \left|q_2
  \left(\frac{\sigma-\beta}{\delta} \right) - 
  p_2 \right| < \frac{1}{M}$$ 
(note indeed that the set $C_d(\eta, q)$ appearing in Lemma  \ref{leminclusion} is easily described when $d=1$~: it is the set of rationals $p/q$ such that $1\le |q|\le \eta^{-1}$ whenever $\gcd(p,q)=1$).
Multiplying the first formula by $q_2$ and the second one by
$q_1 \delta$ and using the triangle inequality we obtain
\begin{equation}\label{eq: cannot hold}
\left| q_1q_2 \, \beta +q_1 p_2 \delta - p_1 q_2\right| < \frac{2 \delta}{\vre M}.
\end{equation}
Now set $P = q_1q_2, \, Q = q_1p_2$, and invoke assumption (iii),
with $\eta/2$ in place of $\eta$.
At the possible expense of replacing $M$ with its 
constant multiple, we see that \equ{eq: cannot hold} cannot happen
when $M > c/\vre^{5+\eta}$.  
\end{proof}

\begin{proof}[Proof of Proposition \ref{prop: third}]
It is easy to check that (i) 
and (ii) are satisfied by $\alpha, \beta, \gamma, \delta$. With these
choices, $\alpha $ is an irrational in $\Q(\sqrt{2})$, $\gamma$
is an irrational in $\Q(\sqrt{3})$, and $\beta, \delta$ are in
$\Q(\sqrt{2}, \sqrt{3})$ such that $1, \beta, \delta$ are linearly
independent over $\Q$. Now requirement (iii) follows from a theorem of
Schmidt, see \cite[Cor. 1E, p.152]{SchmidtDioph}. 
\end{proof}

\section{A metric theory of uniformly Diophantine $s$--tuples}\label{sec:
  diophantine  
  notions} 
Throughout this section, $\Phi$ is a non-increasing function tending
to zero at infinity, $s\ge d+1$, and $\bm{\Theta} = \bm{\Theta}_{s,d}$ is an
$s$-tuple of vectors in $\R^d$.   
  
\subsection{Uniformly Diophantine $s$-tuples and multilinear
  algebra} \label{udtmultilin} 
Our goal is to provide a sufficient condition for $\bm{\Theta}$
to belong to the set $UDT_s^d(\Phi)$. We will require some
preliminaries from multilinear algebra. We introduce the required
notions and facts, referring to~\cite[Chap.3]{bourbaki} for proofs and
more details. 

Equip $\R^{s}$ with its usual scalar product and let
$\left\{\bm{e}_i\right\}_{1\le i\le   s}$ be the standard basis. The
Grassmann algebra is the vector space 
$$\bigwedge\R^{s}\; \df \; \bigoplus^{s}_{r=0}
\bigwedge^{r}\R^{s}$$ 
equipped with the inner product for which the set of wedge products  
$\bm{e}_{i_1}\wedge \dots \wedge \bm{e}_{i_r},$
where $$1\le i_1 < i_2 <  \,\dots\, < i_r \leq s \qquad \mbox{ and }\qquad
0\le r\le s,$$ is an orthonormal basis.  A multivector 
$\bm{X}\in 
\bigwedge^{r}\R^{s}$ is said to be {\em decomposable} if
there exist $\bm{x}_1, \, \dots\, , \bm{x}_r$ in
$\R^{s}$ such that $\bm{X} = \bm{x}_1\wedge \dots\wedge \bm{x}_r$.  
The Cauchy-Binet formula shows that the scalar product $\bm{X\cdot Y}$
between two pairs of decomposable vectors $\bm{X} = \bm{x}_1\wedge
\dots\wedge \bm{x}_r$ and $\bm{Y} = \bm{y}_1\wedge \dots\wedge
\bm{y}_r$ 
is given
by $$\bm{X\cdot Y} = \det\left(\bm{x_i\cdot y_j}\right)_{1\le i,j\le
  r}.$$ 
From now on, the notation $\left\|\: .\:\right\|$ will be reserved for the norm derived from this inner product  (note that its restriction to $\bigwedge^1 \R^s \simeq \R^s$ is the usual Euclidean norm $\left\|\: .\:\right\|_2$ in $\R^s$).

Let $\mathbb{P}(\bigwedge \R^s)$ be the space of lines in $\bigwedge
\R^s$, and for any subspace $V$ of $\R^{s}$, given a basis $\bm{v}_1,
\dots, \bm{v}_r$ of $V$, 
define $\bm{X}_V \in \mathbb{P}(\bigwedge \R^s)$ as the line spanned
by $\bm{v}_1 \wedge \dots \wedge \bm{v}_r\in
\bigwedge^{r}\R^{s}.$ It is easily seen that this is well-defined
(independent of the choice of the basis), and it is known that the map
$V \mapsto \bm{X}_V$ (which is called the \emph{Pl\"{u}cker
  embedding})  is a bijection between the set of $r$-dimensional
linear subspaces in $\R^{s}$ and 
the set of lines spanned by nonzero decomposable multivectors in $
\bigwedge^{r}\R^{s}$. For any nonzero $\bm{u} \in  \R^s$, the length
of the projection of $\bm{u}$ on the space orthogonal to $V$ is given by 
$$
\|\bm{X}_V \wedge \bm{u}\| \df \frac{\| \hat{\bm{X}}_V  \wedge \bm{u}
  \|}{\| \hat{\bm{X}}_V \|}, \ \ 
\text{ where } \hat{\bm{X}}_V = \bm{v}_1 \wedge \cdots
  \wedge \bm{v}_r, 
$$
and this is again independent of choices. The quantity
$\frac{\|\bm{X}_{V} \wedge \bm{u}\|}{\|\bm{u}\|}$ is 
sometimes called the {\em projective distance} between $V$ and the line spanned by 
$\bm{u}$. 
See \cite[\S3]{bugeaudlaurent} and~\cite[\S2]{laurent} for more
details. 

Given an integer $T\ge 1$, define $\mathcal{V}_{s,d}(T)$ to be the set of
$s\times d$ integer matrices
\begin{equation}\label{defvsdT}
\left\{(\bm{u}_1, \dots,
  \bm{u}_s)^T\in\Z^{s\times d} : \forall i\in\{1, \ldots, 
  s\}, \ \bm{u}_i  \in \Z^d \text{ and } 1\le \left\|\bm{u}_i\right\|_{\infty}\le
  T\right\}.
\end{equation}
Furthermore, given a matrix $\bm{U} = (\bm{u}_1, \dots,
\bm{u}_s)^T\in\mathcal{V}_{s,d}(T)$, define 
\begin{equation}\label{deftuthetasd}
\bm{t}_{\bm{U}}(\bm{\Theta}) \df
\left(\bm{u}_1\bm{\cdot\theta}_{1}, \, \dots\, ,
  \bm{u}_s\bm{\cdot\theta}_{s}\right)^T \in \R^s
\end{equation}
and set for simplicity $\bm{X}_{\bm{U}} =
\bm{X}_{\textrm{colspan}(\bm{U})}$, where $\textrm{colspan}(\bm{U})$
is the subspace of $\R^s$ spanned by the colums of the matrix
$\bm{U}$. 

The main result in this section is then the following:

\begin{prop}\label{propmultilinalg}
Assume that $\bm{\Theta} \notin UDT_s^d(\Phi)$. Then there
exist $T\ge 1, \ \bm{p} \in \Z^s$ and $\bm{U}\in\mathcal{V}_{s,d}(T)$
such that  
\begin{equation}\label{eq: 5.36}
|p_i| \leq 4 \sqrt{d} \cdot \|\bm{u}_i\|_2 \cdot \max
  \{1, \|\bm{\theta}_i \|_\infty\}
\end{equation}
for all $i \in \{ 1, \ldots, s\}$ 
and
\begin{equation}\label{defyputheta}
y_{\bm{p} }\left(\bm{U},  \bm{\Theta}\right)\; = \; \bm{p}+
\bm{t}_{\bm{U}}(\bm{\Theta}) 
\end{equation}
satisfies 
\begin{equation}\label{fundinegmultiudt}
\left\|\bm{X}_{\bm{U}} \wedge 
y_{\bm{p} }\left(\bm{U},
      \bm{\Theta}\right)\right\|
\; < \;  \sqrt{s} \cdot \Phi(T).
\end{equation} 
\end{prop}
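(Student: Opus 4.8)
The plan is to unwind Definition~\ref{defudt} and then re-package the resulting inhomogeneous, one-coordinate-at-a-time information as a single homogeneous approximation statement in $\R^s$. Since $\bm{\Theta}\notin UDT_s^d(\Phi)$, there exist $T\ge 1$ (which we may take to be a positive integer, replacing $T$ by $\lfloor T\rfloor$ if necessary, since an integer vector of sup-norm $\le T$ also has sup-norm $\le\lfloor T\rfloor$ and $\Phi(\lfloor T\rfloor)\ge\Phi(T)$) and $\bm{\xi}\in\R^d$ such that for \emph{every} $i\in\{1,\ldots,s\}$ there is a vector $\bm{u}_i\in\Z^d\setminus\{\bm{0}\}$ with $\|\bm{u}_i\|_\infty\le T$ and $\langle\bm{u}_i\cdot(\bm{\xi}-\bm{\theta}_i)\rangle<\Phi(T)$. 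Because each $\bm{u}_i$ lies in $\Z^d$, the quantity $\langle\bm{u}_i\cdot(\bm{\xi}-\bm{\theta}_i)\rangle$ is unchanged when $\bm{\xi}$ is translated by a vector of $\Z^d$; after such a translation we may and do assume $\|\bm{\xi}\|_\infty\le\tfrac12$. Now set $\bm{U}=(\bm{u}_1,\ldots,\bm{u}_s)^T$; as each $\bm{u}_i$ is a nonzero integer vector of sup-norm $\le T$ we have $1\le\|\bm{u}_i\|_\infty\le T$, so $\bm{U}\in\mathcal{V}_{s,d}(T)$. For each $i$ let $p_i$ be a nearest integer to $\bm{u}_i\cdot(\bm{\xi}-\bm{\theta}_i)$, so that $|p_i-\bm{u}_i\cdot(\bm{\xi}-\bm{\theta}_i)|=\langle\bm{u}_i\cdot(\bm{\xi}-\bm{\theta}_i)\rangle<\Phi(T)$, and put $\bm{p}=(p_1,\ldots,p_s)\in\Z^s$.

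Next I would check \eqref{eq: 5.36}. Using $|p_i-\bm{u}_i\cdot(\bm{\xi}-\bm{\theta}_i)|\le\tfrac12$, Cauchy--Schwarz, and $\|\bm{v}\|_2\le\sqrt{d}\,\|\bm{v}\|_\infty$ for $\bm{v}\in\R^d$, one gets
\[
|p_i|\ \le\ |\bm{u}_i\cdot(\bm{\xi}-\bm{\theta}_i)|+\tfrac12\ \le\ \sqrt{d}\,\|\bm{u}_i\|_2\bigl(\|\bm{\xi}\|_\infty+\|\bm{\theta}_i\|_\infty\bigr)+\tfrac12\ \le\ \sqrt{d}\,\|\bm{u}_i\|_2\bigl(\tfrac12+\|\bm{\theta}_i\|_\infty\bigr)+\tfrac12 .
\]
Since $\tfrac12+\|\bm{\theta}_i\|_\infty\le\tfrac32\max\{1,\|\bm{\theta}_i\|_\infty\}$ and $\tfrac12\le\tfrac12\,\|\bm{u}_i\|_2\max\{1,\|\bm{\theta}_i\|_\infty\}$ (the latter because $\|\bm{u}_i\|_2\ge1$), this yields $|p_i|\le(\tfrac32\sqrt{d}+\tfrac12)\,\|\bm{u}_i\|_2\max\{1,\|\bm{\theta}_i\|_\infty\}\le 4\sqrt{d}\,\|\bm{u}_i\|_2\max\{1,\|\bm{\theta}_i\|_\infty\}$, which is \eqref{eq: 5.36}.

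Finally, to establish \eqref{fundinegmultiudt}, I would invoke the identification recalled just before the statement (via the Pl\"ucker embedding and the projective distance): for any subspace $V\subset\R^s$ and any $y\in\R^s$, $\|\bm{X}_V\wedge y\|$ is the length of the orthogonal projection of $y$ onto $V^\perp$, hence $\|\bm{X}_V\wedge y\|\le\|y-\bm{w}\|$ for every $\bm{w}\in V$. Applying this with $V=\textrm{colspan}(\bm{U})$ and $\bm{w}=\sum_{j=1}^d\xi_j\bm{c}_j$, where $\bm{c}_1,\ldots,\bm{c}_d\in\R^s$ are the columns of $\bm{U}$ (so that the $i$-th coordinate of $\bm{c}_j$ is the $j$-th coordinate of $\bm{u}_i$), we have $\bm{w}\in\textrm{colspan}(\bm{U})$ and the $i$-th coordinate of $\bm{w}$ equals $\bm{u}_i\cdot\bm{\xi}$. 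By \eqref{defyputheta} and \eqref{deftuthetasd} the $i$-th coordinate of $y_{\bm{p}}(\bm{U},\bm{\Theta})-\bm{w}$ is $p_i+\bm{u}_i\cdot\bm{\theta}_i-\bm{u}_i\cdot\bm{\xi}=p_i-\bm{u}_i\cdot(\bm{\xi}-\bm{\theta}_i)$, of absolute value $<\Phi(T)$; combining the $s$ coordinates gives $\|y_{\bm{p}}(\bm{U},\bm{\Theta})-\bm{w}\|<\sqrt{s}\,\Phi(T)$, and therefore $\|\bm{X}_{\bm{U}}\wedge y_{\bm{p}}(\bm{U},\bm{\Theta})\|<\sqrt{s}\,\Phi(T)$, which is \eqref{fundinegmultiudt}.

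The argument is essentially a dictionary between the two formulations and there is no substantial obstacle; the only points that require any care are the harmless normalization $\|\bm{\xi}\|_\infty\le\tfrac12$ (needed to keep $\bm{p}$ under control), the bookkeeping with the absolute constant so that it comes out exactly as $4\sqrt{d}$, and the correct use of the identification of $\|\bm{X}_{\bm{U}}\wedge\,\cdot\,\|$ with Euclidean distance to $\textrm{colspan}(\bm{U})$.
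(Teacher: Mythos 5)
Your proof is correct and follows essentially the same route as the paper's: you unwind the definition of $UDT_s^d(\Phi)$ to extract $T$, $\bm{\xi}$, the $\bm{u}_i$ and $p_i$, bound $|p_i|$ via Cauchy--Schwarz after a harmless integer translation of $\bm{\xi}$, and then bound $\|\bm{X}_{\bm{U}}\wedge y_{\bm{p}}(\bm{U},\bm{\Theta})\|$ by the Euclidean distance from $y_{\bm{p}}(\bm{U},\bm{\Theta})$ to the point $\bm{U}\bm{\xi}\in\textrm{colspan}(\bm{U})$. The only cosmetic difference is that the paper phrases the last step as the wedge-product identity $\hat{\bm{X}}_{\bm{U}}\wedge(\bm{p}+\bm{t}_{\bm{U}}(\bm{\Theta}))=-\hat{\bm{X}}_{\bm{U}}\wedge\bm{\delta}$ followed by Hadamard's inequality, which yields the same estimate; your normalization $\|\bm{\xi}\|_\infty\le\tfrac12$ in the bound for $|p_i|$ is if anything cleaner than the paper's.
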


In particular, $\bm{\Theta}\in UDT_s^d(\Phi)$ as soon as 
\begin{equation}\label{conditionudt}
\inf_{T\ge 1} \; \min_{\bm{U}\in\mathcal{V}_{s,d}(T)} \;
\inf_{\bm{p}\in\Z^{s}} \; 
\left( \sqrt{s} \cdot
    \Phi(T)\right)^{-1}\cdot 
\|\bm{X}_{\bm{U}} 
\wedge
      y_{\bm{p} }\left(\bm{U},
        \bm{\Theta}\right)\|
\; \ge \; 1. 
\end{equation}
%

This condition should be compared with those appearing in the theory
of approximation of vectors by rational subspaces. Let
$\bm{y}\in\R^{s}$ be a nonzero vector. In the standard theory (see
\cite{bugeaudlaurent, laurent} and the references therein), one
is interested in showing the existence of rational $s\times d$ matrices $\bm{U}$ of a given rank $1\le r\le d$ for which the
inequality $\|\bm{X}_{\bm{U}} \wedge \bm{y}\|\le \Phi(T)$ holds under the assumption that
the so--called  `Weil height' of the subspace $\textrm{colspan}(\bm{U})$ is bounded by $T$ (this height is at most $(T')^r$ if the columns of $\bm{U}$ have Euclidean norms at most $T'$). In the problem we are considering, the vector $\bm{y}$ is
not fixed but rather varies along with the approximant $\bm{U}$, via
formula \eqref{defyputheta}. 

Proposition~\ref{propmultilinalg} justifies a claim made
after Theorem~\ref{thmforestperes}; namely that a pair $\left(\alpha,
  \beta\right)^T\in\R^2$ such that  $\sigma \df \beta -
\alpha$ is a badly approximable number belongs to 
$UDT_2^1(3)$. Indeed, 
$$\mathcal{V}_{2,1}(T)\;=\;\left\{(q, v)^T\in\Z^2\; :\; 1\le
  \left|q\right|, \left|v\right| \le T\right\}.$$ 
From condition~\eqref{conditionudt}, the claim is easily seen to
be implied by  the existence of a constant $c=c\left(\sigma \right)>0$
such that for 
$T\ge 1$,
$$\min_{1\le |q|, |v|\le T}\frac{\langle
  qv\sigma\rangle}{\sqrt{q^2+v^2}}\ge \frac{c}{T^3}\cdot$$ 
This follows from the assumption that $\sigma$ is a badly
approximable number; that is, from the relation $\inf_{m\in
  \Z\backslash \{0\}}|m|\cdot\langle m\sigma\rangle\; >\; 0.$

\begin{proof}[Proof of Proposition~\ref{propmultilinalg}]
The condition  $\bm{\Theta} \not\in UDT_s^d(\Phi)$  means that
there exist $T\ge 1$ and $\bm{\xi}\in\R^d$  such that for each index
$1\le  i \le s$, one can find an integer $p_i$ and an integer vector
$\bm{u}_i$ satisfying the relations 
\begin{equation}\label{systeq}
1\le \left\|\bm{u}_i \right\|_{\infty}\le T \ \ \mbox{ and } \ 
\bm{u}_i\bm{\cdot \xi} = p_i+\bm{u}_i\bm{\cdot\theta}_{i}+\delta_i, \
\text{ with } \ |\delta_i|<\Phi(T).
\end{equation}
The $p_i$ here satisfy the bound \equ{eq: 5.36}. Indeed, 
we may assume (translating $\bm{\xi}$ by an integer vector if
necessary) that $\left\|
  \bm{\xi}-\bm{\theta}_i\right\|_{\infty}\le 1 $. Thus for any 
$1\le i\le s$, by the Cauchy--Schwarz inequality, 
\begin{align*}
\left| p_i \right|\; & \le \; \left\|\bm{u}_i\right\|_2 \cdot \left(
                       \left\|\bm{\xi}\right\|_2
                       +\left\|\bm{\theta}_i\right\|_2\right)+\left|
                       \delta_i\right| \nonumber \\ 
& \le \; \left\|\bm{u}_i\right\|_2 \cdot \sqrt{d} \cdot \left( 1+2
  \left\|\bm{\theta}_i\right\|_{\infty}\right)+1, 
\end{align*}
where the trivial bound $\Phi(T)\le 1$ guaranteed
by~\eqref{lowboundudt} is used to obtain the last inequality. 
Now define 
\begin{itemize}
\item $\bm{U}$ as the $s\times d$ matrix $\bm{U}\df (\bm{u}_1, \dots,
  \bm{u}_s)^T;$ 
\item $\bm{p}$ as the $s$-dimensional integer vector  $\bm{p} \df
  (p_1, \dots, p_s)^T;$ 
\item $\bm{\delta}$ as the $s$-dimensional vector $\bm{\delta} \df
  (\delta_1, \dots, \delta_s)^T;$ 
\item $\bm{t}_{\bm{U}}(\bm{\Theta})$ as the $s$-dimensional
  vector~\eqref{deftuthetasd}. 
\end{itemize}
The system of equations~\eqref{systeq} can then be rewritten as
\begin{align}\label{matrixsysteq}
\bm{U}\bm{\xi} = \bm{p}+ \bm{t}_{\bm{U}}(\bm{\Theta}) +\bm{\delta}
\end{align}
with 
\begin{align}\label{condmatrixsysteq}
\bm{U}\in\mathcal{V}_{s,d}(T) \qquad \textrm{ and } \qquad \left\|\bm{\delta}\right\|_{\infty}< \Phi(T).
\end{align}
Consider $\bm{\xi}$ as the unknown in the linear system of $s$
equations in $d$ variables~\eqref{matrixsysteq}. Assume furthermore
that 
$\bm{U}$ has rank  
$1\le r \le d,$
and let $\bm{v}_1, \, \dots\, , \bm{v}_r\in\Z^{s}$ denote $r$
linearly independent columns of the matrix $\bm{U}$. From the theory
of Gaussian elimination, the system~\eqref{matrixsysteq} admits a
solution if and only if $\bm{p}+
\bm{t}_{\bm{U}}(\bm{\Theta}) +\bm{\delta} \in \mathrm{span}
\left\{\bm{v}_1, \, \dots\, , \bm{v}_r \right\}$; that is, if
and only if
\begin{equation*}\label{wedgeconditiom}
\left(\bigwedge_{i=1}^{r} \bm{v}_{i} \right)\wedge \left(\bm{p}+
  \bm{t}_{\bm{U}}(\bm{\Theta}) +\bm{\delta}\right)\; = \;
\bm{0}. 
\end{equation*}
This equation can be rewritten as 
\begin{equation*}\label{wedgeconditiombis}
\hat{\bm{X}}_{\bm{U}}\wedge \left(\bm{p}+
  \bm{t}_{\bm{U}}(\bm{\Theta})\right)\; = \;
-\hat{\bm{X}}_{\bm{U}}\wedge \bm{\delta}, 
\end{equation*}
where $\hat{\bm{X}}_{\bm{U}} = \bm{v}_1 \wedge \, \dots\, \wedge
\bm{v}_r$. 
Hadamard's
inequality (see~~\cite[eq.~(13) p.49]{whitney}) then implies that
\begin{align*}\label{wedgeconditiomter}
\left\|\hat{\bm{X}}_{\bm{U}}\wedge \left(\bm{p}+
  \bm{t}_{\bm{U}}(\bm{\Theta})\right)\right\|\; &\le \;
                                                        \left\|\hat{\bm{X}}_{\bm{U}}\right\|\cdot
                                                        \left\|\bm{\delta}\right\|_{2}\\  
& \underset{\eqref{condmatrixsysteq}}{<}\; \sqrt{s} \cdot
  \Phi(T)\cdot \left\|\hat{\bm{X}}_{\bm{U}}\right\| , 
\end{align*}
whence the Proposition.
\end{proof}

\subsection{Towards a metric theory of uniformly Diophantine
  $s$-tuples}\label{proofthm5.3} 
The goal of this section is to establish
Theorem~\ref{thmforestperesbis}. This will be done with the help of
several lemmas. 

\begin{lem}\label{lemprojmultivec}
Let $r \in \{1, \ldots, s \}$, let $\bm{X}\in  \bigwedge^r \R^{s}$ be a nonzero
decomposable multivector, and let
$\bm{x}\in\R^{s}$. Then 
$$\left\|\bm{X}\wedge\bm{x}\right\| =
\left\|\bm{X}\right\| \cdot
\left\|P_{\bm{X}}^{\perp}\left(\bm{x}\right)\right\|, $$ where
$P_{\bm{X}}^{\perp}$ denotes the orthogonal projection onto the
orthocomplement of the subspace represented by 
$\bm{X}$. 
\end{lem}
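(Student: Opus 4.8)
The plan is to reduce everything to an orthonormal frame and then invoke the Cauchy--Binet formula recalled above, in the form $\norm{\bm{Y}}^2 = \det(\bm{y}_i \cdot \bm{y}_j)_{1\le i,j\le k}$ for a decomposable multivector $\bm{Y} = \bm{y}_1 \wedge \cdots \wedge \bm{y}_k$. The immediate consequence we will use repeatedly is that if $\bm{f}_1, \ldots, \bm{f}_k$ is an orthonormal family in $\R^s$ then $\norm{\bm{f}_1 \wedge \cdots \wedge \bm{f}_k} = \sqrt{\det I_k} = 1$.

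First I would write $\bm{X} = \bm{v}_1 \wedge \cdots \wedge \bm{v}_r$ and set $V = \mathrm{span}\{\bm{v}_1, \ldots, \bm{v}_r\}$, the $r$-dimensional subspace represented by $\bm{X}$. Picking an orthonormal basis $\bm{f}_1, \ldots, \bm{f}_r$ of $V$, the one-dimensionality of $\bigwedge^r V$ gives $\bm{X} = c\,\bm{f}_1 \wedge \cdots \wedge \bm{f}_r$ for some scalar $c \in \R$, and taking norms together with the remark above yields $|c| = \norm{\bm{X}}$. Next, decompose $\bm{x} = \bm{x}^{\parallel} + \bm{x}^{\perp}$ with $\bm{x}^{\parallel} \in V$ and $\bm{x}^{\perp} = P_{\bm{X}}^{\perp}(\bm{x}) \in V^{\perp}$. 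By multilinearity of the wedge product, $\bm{X} \wedge \bm{x} = \bm{X} \wedge \bm{x}^{\parallel} + \bm{X} \wedge \bm{x}^{\perp}$, and the first term vanishes because $\bm{x}^{\parallel}$ lies in $\mathrm{span}\{\bm{v}_1, \ldots, \bm{v}_r\}$, so $\bm{v}_1 \wedge \cdots \wedge \bm{v}_r \wedge \bm{x}^{\parallel} = \bm{0}$. Hence $\bm{X} \wedge \bm{x} = c\,\bm{f}_1 \wedge \cdots \wedge \bm{f}_r \wedge \bm{x}^{\perp}$.

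To finish, if $\bm{x}^{\perp} = \bm{0}$ then both sides of the claimed identity are zero and there is nothing to prove; otherwise $r < s$ and I would set $\bm{f}_{r+1} = \bm{x}^{\perp}/\norm{\bm{x}^{\perp}}$, so that $\bm{f}_1, \ldots, \bm{f}_{r+1}$ is orthonormal and $\norm{\bm{f}_1 \wedge \cdots \wedge \bm{f}_{r+1}} = 1$. Pulling out scalars,
$$\norm{\bm{X} \wedge \bm{x}} = |c| \cdot \norm{\bm{x}^{\perp}} \cdot \norm{\bm{f}_1 \wedge \cdots \wedge \bm{f}_{r+1}} = \norm{\bm{X}} \cdot \norm{P_{\bm{X}}^{\perp}(\bm{x})},$$
which is exactly the assertion. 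There is no genuine obstacle here; the only points deserving a word of care are the bookkeeping in the degenerate case $\bm{x}^{\perp} = \bm{0}$ (which subsumes the boundary case $r = s$, where $\bigwedge^{s+1}\R^s = 0$ and both sides again vanish), and making explicit that "a wedge of an orthonormal family has unit norm" is nothing more than the Cauchy--Binet formula already quoted.
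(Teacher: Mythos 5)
Your proof is correct and complete. The paper itself gives no argument for this lemma, simply declaring it well-known and citing Whitney; your derivation via an orthonormal basis of the subspace represented by $\bm{X}$, the decomposition $\bm{x} = \bm{x}^{\parallel} + \bm{x}^{\perp}$, and the Cauchy--Binet identity $\left\|\bm{f}_1 \wedge \cdots \wedge \bm{f}_k\right\| = 1$ for orthonormal families is exactly the standard proof one would expect to find in that reference, and you have taken appropriate care with the degenerate cases $\bm{x}^{\perp} = \bm{0}$ and $r = s$.
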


\begin{proof}
This is well--known. See~\cite[Chap.1. \S15]{whitney} for details.
\end{proof}


The following is an easy consequence
of the compactness of the Grassmann variety of $k$-dimensional
subspaces in $\R^s$. Note that an explicit value of the constant $c_s$ below can be worked out from~\cite[Theorem~1]{miaoadi} (one can for instance take $c_s=2^{-s-1}$).

\begin{lem}\label{lemindiceproj}
There is a constant $c_s>0$ such that for any $k \in \{1, \ldots, s\}$
and any 
$k$-dimensional subspace $H \subset \R^{s}$, the following
holds. Denote by $P_H$ the orthogonal projection onto $H$ and by 
$\left(\bm{e}_1, \, \dots\, , \bm{e}_s\right)$ the standard basis of
$\R^{s}$. Then there exist indices $1\le i_1<i_1<\dots < i_k\le
s$ such that for any $\bm{x} \in \mathrm{span} \{\bm{e}_{i_j}\}_{1
  \leq j \leq k}$, 
$$\left\| P_H\left( \bm{x}\right) \right\|_2\; \ge c_s \|\bm{x}\|_2.$$
\end{lem}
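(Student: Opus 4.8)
The plan is to reduce the statement to a compactness argument over the Grassmann variety $\mathrm{Gr}(k, s)$ of $k$-dimensional subspaces of $\R^s$. For a fixed $k$-dimensional subspace $H$ with orthogonal projection $P_H$, and for a fixed $k$-element index set $I = \{i_1 < \cdots < i_k\}$, consider the quantity
$$
f(H, I) \;\df\; \min_{\substack{\bm{x} \in \mathrm{span}\{\bm{e}_i : i \in I\} \\ \|\bm{x}\|_2 = 1}} \|P_H(\bm{x})\|_2 \;=\; \sqrt{\lambda_{\min}\!\left(R_I^T\, P_H\, R_I\right)},
$$
where $R_I : \R^k \to \R^s$ is the inclusion of the coordinate subspace indexed by $I$. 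Then define
$$
g(H) \;\df\; \max_{|I| = k} f(H, I),
$$
the maximum being over all $\binom{s}{k}$ coordinate $k$-subspaces. The claim is exactly that $\inf_{H} g(H) > 0$, with the infimum over all $H \in \mathrm{Gr}(k, s)$; taking $c_s$ to be the minimum of these infima over $k \in \{1, \ldots, s\}$ then gives the lemma.

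First I would observe that $g$ is continuous on $\mathrm{Gr}(k,s)$: the map $H \mapsto P_H$ is continuous, matrix entries of $R_I^T P_H R_I$ depend continuously on $P_H$, the smallest eigenvalue of a symmetric matrix depends continuously on its entries, and a maximum of finitely many continuous functions is continuous. Since $\mathrm{Gr}(k,s)$ is compact, $g$ attains its infimum, so it suffices to show $g(H) > 0$ for every $H$. For this, fix $H$ and suppose for contradiction that $f(H, I) = 0$ for every index set $I$ of size $k$; that is, for each such $I$ there is a unit vector $\bm{x}_I$ in the coordinate subspace $\mathrm{span}\{\bm{e}_i : i \in I\}$ with $P_H(\bm{x}_I) = 0$, i.e.\ $\bm{x}_I \in H^\perp$. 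I would then argue that this forces $H^\perp$ to contain a basis of $\R^s$ — more precisely, that one can select among the vectors $\{\bm{x}_I\}$ a collection of $s$ linearly independent vectors, contradicting $\dim H^\perp = s - k < s$. A clean way to see this: if $H^\perp$ meets every coordinate $k$-subspace nontrivially, then by a standard linear-algebra fact (dualize: $H$ has dimension $k$ and would have to surject onto every coordinate $(s-k+1)$-dimensional quotient nontrivially, or equivalently, the coordinate projection $\R^s \to \R^{s-k}$ onto any $(s-k)$ coordinates, restricted to $H^\perp$, has nontrivial kernel for no choice of coordinates unless...) — I would phrase it via: a subspace $W$ of $\R^s$ with $\dim W = m$ is contained in some coordinate subspace of dimension $s-1$ unless... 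Actually the cleanest route is the following direct one: if $H^\perp \cap \mathrm{span}\{\bm{e}_i : i \in I\} \neq \{0\}$ for every $(s-k)$-element... no: here $|I| = k$. Since $H^\perp$ has dimension $s-k$, a generic coordinate subspace of dimension $k$ should meet it only in $0$ when $k < s - (s-k)$... but $k$ can be large. I would instead use: pick any $\bm{x}_{I_1} \in H^\perp$ supported on $I_1$; extend $I_1$-complement reasoning. The robust argument: the set of $\bm{x}$ with $P_H \bm{x} = 0$ is $H^\perp$; if this meets every coordinate $k$-plane, then in particular, writing $H = \mathrm{span}\{\bm{h}_1, \ldots, \bm{h}_k\}$ as rows of a $k \times s$ matrix $A$ of full rank $k$, the condition $f(H,I) = 0$ says the $k \times k$ submatrix $A_I$ of columns indexed by $I$ is singular. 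So all $\binom{s}{k}$ maximal minors of $A$ vanish, which means $\mathrm{rank}(A) < k$, a contradiction. This is the key step and the cleanest formulation.

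So the argument collapses to: $f(H,I) = 0 \iff \det(A_I) = 0$ where $A$ is a full-rank $k \times s$ matrix with row span $H$; all maximal minors vanishing contradicts $\mathrm{rank}(A) = k$; hence for each $H$ some $f(H,I) > 0$, so $g(H) > 0$; compactness then yields a uniform positive lower bound $c_s$ depending only on $k$ (and $s$), and minimizing over $k$ finishes it. The main obstacle — and it is a minor one — is simply packaging the equivalence $f(H,I) = 0 \iff \det(A_I) = 0$ correctly: one must note $P_H(\bm{x}) = 0$ for a nonzero $\bm{x}$ in the coordinate $I$-plane exactly when the columns of $A$ indexed by $I$ fail to span $\R^k$ (equivalently $A_I$ is singular), using that $\ker P_H = H^\perp$ and $H^\perp \cap \mathrm{span}\{\bm{e}_i : i\in I\} \neq \{0\}$ is equivalent to $A_I$ having nontrivial kernel. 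The remark about $c_s = 2^{-s-1}$ via \cite{miaoadi} is a quantitative refinement and can be quoted as is; it is not needed for the existence claim. I would also remark that all norms here are Euclidean ($\|\cdot\|_2$) throughout, consistent with the inner product fixed earlier in the section.
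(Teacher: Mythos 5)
Your argument is correct and follows exactly the route the paper indicates (compactness of the Grassmannian), just with the details spelled out; the paper simply states the lemma as ``an easy consequence of the compactness of the Grassmann variety'' and cites \cite{miaoadi} for the explicit constant $c_s = 2^{-s-1}$. The determinant characterization $f(H,I)=0 \iff \det(A_I)=0$, with the rank contradiction, is indeed the cleanest way to establish pointwise positivity of $g$ before invoking compactness.
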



We will also need a consequence of the Brunn-Minkowski inequality (see
\cite[\S 10.1]{schneider} for a more detailed discussion). 

\begin{lem}\label{covariogramme}
Let $C$ and $K$ be centrally symmetric convex bodies in $\R^k$.
Then for any
$\bm{x}\in\R^k$, 
\begin{equation}\label{eq: lemma 7.4}
\Vol \left(C\cap \left(K+\bm{x} \right) \right)\;\le
\; \Vol \left(C\cap K \right).
\end{equation}
\end{lem}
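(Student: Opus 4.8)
\textbf{Proof plan for Lemma \ref{covariogramme}.}

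The plan is to prove the inequality \eqref{eq: lemma 7.4} via the Brunn--Minkowski inequality in the dimensionally homogeneous form $\Vol(A+B)^{1/k} \ge \Vol(A)^{1/k} + \Vol(B)^{1/k}$ for nonempty compact sets $A, B \subset \R^k$. The key observation is that the function $g(\bm{x}) \df \Vol\bigl(C \cap (K+\bm{x})\bigr)$ — the covariogram-type function associated with the pair $(C,K)$ — is $\frac{1}{k}$-concave on its support, in the sense that $g^{1/k}$ is concave there. Once this concavity is established, the result follows from central symmetry: since both $C$ and $K$ are centrally symmetric about the origin, one checks that $g(-\bm{x}) = \Vol\bigl(C \cap (K-\bm{x})\bigr) = \Vol\bigl((-C)\cap(-K+\bm{x})\bigr)$, and applying $-C = C$, $-K = K$ after the change of variables $\bm{y} \mapsto -\bm{y}$ gives $g(-\bm{x}) = g(\bm{x})$, i.e. $g$ is an even function. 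An even $\frac{1}{k}$-concave (hence quasi-concave) function attains its maximum at the centre of symmetry $\bm{x} = \bm{0}$, and $g(\bm{0}) = \Vol(C \cap K)$, which is precisely \eqref{eq: lemma 7.4}.

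First I would set up the concavity claim. Fix $\bm{x}_0, \bm{x}_1 \in \R^k$ in the support of $g$ and $\lambda \in [0,1]$, and write $\bm{x}_\lambda = (1-\lambda)\bm{x}_0 + \lambda \bm{x}_1$. The elementary inclusion
\[
\bigl(C \cap (K+\bm{x}_0)\bigr)(1-\lambda) + \bigl(C \cap (K+\bm{x}_1)\bigr)\lambda \;\subseteq\; C \cap (K + \bm{x}_\lambda)
\]
holds because $C$ is convex (so the Minkowski combination of two subsets of $C$ lies in $C$) and likewise $(1-\lambda)(K+\bm{x}_0) + \lambda(K+\bm{x}_1) \subseteq K + \bm{x}_\lambda$ by convexity of $K$. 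Taking volumes and applying Brunn--Minkowski to the left-hand side yields
\[
\Vol\bigl(C \cap (K+\bm{x}_\lambda)\bigr)^{1/k} \;\ge\; (1-\lambda)\,\Vol\bigl(C\cap(K+\bm{x}_0)\bigr)^{1/k} + \lambda\,\Vol\bigl(C\cap(K+\bm{x}_1)\bigr)^{1/k},
\]
which is exactly the $\frac{1}{k}$-concavity of $g$. Then I would invoke evenness: apply the displayed concavity with $\bm{x}_0 = \bm{x}$, $\bm{x}_1 = -\bm{x}$, $\lambda = \tfrac12$, obtaining $g(\bm{0})^{1/k} \ge \tfrac12 g(\bm{x})^{1/k} + \tfrac12 g(-\bm{x})^{1/k} = g(\bm{x})^{1/k}$ using $g(\bm{x}) = g(-\bm{x})$. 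Raising to the $k$-th power gives the lemma. (If $\bm{x}$ is not in the support of $g$ then the left-hand side of \eqref{eq: lemma 7.4} is zero and there is nothing to prove.)

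I do not expect a serious obstacle here; the only point requiring a little care is the boundary/degenerate behaviour. Specifically, Brunn--Minkowski in the stated additive form requires the two sets to be nonempty, so the concavity argument should be run on the support of $g$; and one should note that for $k$-dimensional bodies $C, K$ the set where $g > 0$ is itself convex (being the image of the support computation), so the two-point concavity inequality is legitimate for the pair $\bm{x}, -\bm{x}$ whenever $g(\bm{x}) > 0$. The case $g(\bm{x}) = 0$ is trivial as noted. Everything else is the standard Brunn--Minkowski machinery, for which I would cite \cite[\S 10.1]{schneider} as the lemma statement already suggests.
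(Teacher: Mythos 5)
Your proof is correct and follows essentially the same route as the paper: define the covariogram-type function, show it is $\tfrac{1}{k}$-concave on its natural domain by combining the inclusion $(1-t)\bigl(C\cap(K+\bm{x})\bigr)+t\bigl(C\cap(K+\bm{y})\bigr)\subseteq C\cap\bigl(K+(1-t)\bm{x}+t\bm{y}\bigr)$ with Brunn--Minkowski, then use evenness from central symmetry to locate the maximum at the origin. The paper packages the final step slightly differently (restricting to the segment $t\mapsto f_{C,K}(t\bm{x})$, $t\in[-1,1]$, and noting an even concave function on a symmetric interval peaks at $0$), but this is the same two-point concavity inequality you invoke at $\pm\bm{x}$ with $\lambda=\tfrac12$; there is no substantive difference.
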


\begin{proof}
Let 
$$
\mathcal{C} = \{\bm{y} \in \R^k: C \cap (K+\bm{y}) \neq \varnothing\},
$$
and let 
$$f_{C,K}: \mathcal{C} \to \R, \ \ \ \ f_{C,K}(\bm{x}) \df \Vol\left(C\cap \left(K+\bm{x}
   \right)\right)^{1/k}.$$ 
   
   Fix $t\in (0,1)$ and $\bm{x}, \bm{y}\in \mathcal{C}$. Since 
\begin{align*}
C\cap\left(K+(1-t)\bm{x}+t\bm{y}) \right)\; & = \; C\cap\left((1-t)\left( K+\bm{x}\right)+t\left(K+\bm{y} \right)\right)\\
& \supset \; (1-t)\left( C\cap \left(K+\bm{x} \right)\right) + t\left(C\cap \left( K+\bm{y}\right)\right),
\end{align*}
   the Brunn-Minkowski inequality implies that $f_{C,K}$ is concave on $\mathcal{C}$.
If $\bm{x} \notin \mathcal{C}$ then \equ{eq: lemma 7.4} is immediate,
so let $\bm{x} \in \mathcal{C}$.  
Since $C$ and $K$ are centrally symmetric, we have $f_{C, K}(-\bm{x})
= f_{-C,
  -K}(-\bm{x})= f_{C, K}(\bm{x})$, and thus the concave function 
$$t\in [-1, 1]\mapsto f_{C, K}\left(t\bm{x}\right)$$ is
even. It therefore reaches its maximum when $t=0$.
\end{proof}

With the notation of Proposition~\ref{propmultilinalg}, given positive integers
$N, T$ and $\bm{U}\in \mathcal{V}_{s,d}(T)$, let
$E^{(N)}_{s,d}\left(\bm{U},T\right) $
be the set of $d\times s$-matrices $\bm{\Theta}$ satisfying 
\begin{equation}\label{normtheta}
\| \bm{\Theta} \|_\infty < N 
\end{equation}
and such that for some $ \bm{p}\in\Z^{s}$, \eqref{fundinegmultiudt}
holds, and 
\begin{equation}\label{conditiontheta} 
 \left|p_i\right|\le 4\sqrt{d}N\left\|\bm{u}_i \right\|_2 \ \ \text{
   for all }  i \in \{1,
 \ldots, s\}. 
\end{equation}
Note that this is just a reformulation of 
inequality~\equ{eq: 5.36} taking into account 
assumption~\eqref{normtheta}.  Then we have:

\begin{lem}\label{lememasure}
With the above notation, 
$$\Vol\left(E_{s,d}^{(N)}\left(\bm{U},T\right) \right ) = O\left(T^{r}\cdot
  \Phi(T)^{s-r}\right),$$ 
where $r = \mathrm{rank}(\bm{U})$ and the
implicit constant depends on 
$s, \, d$ and $N$.
\end{lem}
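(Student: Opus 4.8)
The plan is to bound the measure of $E_{s,d}^{(N)}(\bm{U},T)$ by slicing the matrix $\bm{\Theta}$ along the $r$-dimensional coordinate directions furnished by Lemma~\ref{lemindiceproj} and integrating. Fix $\bm{U}\in\mathcal{V}_{s,d}(T)$ of rank $r$, write $H=\textrm{colspan}(\bm{U})$ and $\bm{X}_{\bm{U}}=\bm{X}_H$. By Lemma~\ref{lemprojmultivec}, the condition \eqref{fundinegmultiudt} is equivalent, for a given $\bm{p}\in\Z^s$, to
$$
\left\|P_{\bm{X}_{\bm{U}}}^{\perp}\bigl(\bm{p}+\bm{t}_{\bm{U}}(\bm{\Theta})\bigr)\right\| < \sqrt{s}\,\Phi(T),
$$
i.e. the point $\bm{p}+\bm{t}_{\bm{U}}(\bm{\Theta})\in\R^s$ lies in the slab $H+B_2(\bm{0},\sqrt{s}\,\Phi(T))$ around $H$. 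Now $\bm{t}_{\bm{U}}(\bm{\Theta})=(\bm{u}_1\cdot\bm{\theta}_1,\dots,\bm{u}_s\cdot\bm{\theta}_s)^T$ depends linearly on $\bm{\Theta}$, one coordinate $\bm{u}_i\cdot\bm{\theta}_i$ per column $\bm{\theta}_i$, and each $\|\bm{u}_i\|_\infty\geq 1$. So fixing all coordinates of $\bm{\theta}_i$ except one in which $\bm{u}_i$ has a nonzero entry, the map $\bm{\theta}_i\mapsto \bm{u}_i\cdot\bm{\theta}_i$ pushes Lebesgue measure on a unit interval to Lebesgue measure on an interval, with Jacobian bounded below by $1$ (a coordinate of $\bm{u}_i$); combined with the box constraint $\|\bm{\Theta}\|_\infty<N$ this gives a change of variables under which the constraint on $\bm{t}_{\bm{U}}(\bm{\Theta})$ becomes a constraint on a variable $\bm{s}=(s_1,\dots,s_s)\in\R^s$ ranging over a box of controlled side length $O_N(\|\bm{u}_i\|_2)$ in each coordinate, with the remaining $d-1$ coordinates of each $\bm{\theta}_i$ contributing a bounded volume factor $O_N(1)$.

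The heart of the estimate is then to bound, for the box $B\subset\R^s$ obtained this way, the quantity $\sum_{\bm{p}}\Vol\bigl(B\cap(\text{slab}-\bm{p})\bigr)$, summed over the finitely many $\bm{p}\in\Z^s$ permitted by \eqref{conditiontheta}. Here is where Lemma~\ref{lemindiceproj} enters: choose indices $i_1<\dots<i_r$ so that the projection $P_H$ restricted to $W\df\mathrm{span}\{\bm{e}_{i_1},\dots,\bm{e}_{i_r}\}$ is bounded below by $c_s$; equivalently $P_H^\perp$ restricted to the complementary coordinate subspace $W'$ (spanned by the other $s-r$ basis vectors) is also bounded below by a constant. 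Decompose $\R^s=W\oplus W'$. For each fixed value of the $W$-coordinates, the set of $W'$-coordinates for which $\bm{s}$ lies in the slab $H+B_2(\bm{0},\sqrt{s}\Phi(T))$ has $(s-r)$-dimensional measure $O(\Phi(T)^{s-r})$, because the slab has thickness $O(\Phi(T))$ transverse to $H$ and $P_H^\perp|_{W'}$ is nondegenerate. Integrating over the $W$-coordinates, which range over a box of side $O_N(T)$ in each of the $r$ coordinates (using $\|\bm{u}_i\|_2\leq\sqrt{d}\,T$), gives $\Vol(B\cap\text{slab})=O(T^r\Phi(T)^{s-r})$ for a single translate. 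Finally Lemma~\ref{covariogramme} (Brunn--Minkowski) applied to $C=B$ and $K=H\cap B_2(\bm{0},R)\cap(\text{large box})$ — i.e. the intersection of the slab with a suitably large box, which is a centrally symmetric convex body — shows that $\Vol(B\cap(\text{slab}-\bm{p}))\leq\Vol(B\cap\text{slab})$ for every $\bm{p}$; summing over the $O_N\bigl(\prod_i\|\bm{u}_i\|_2\bigr)=O_N(1)$ (for fixed $\bm{U}$, hence bounded in terms of $s,d,N$) admissible $\bm{p}$ preserves the bound up to a constant depending on $s,d,N$. Unwinding the change of variables yields $\Vol(E_{s,d}^{(N)}(\bm{U},T))=O(T^r\Phi(T)^{s-r})$ as claimed.

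The main obstacle I anticipate is the bookkeeping around the $\bm{p}$-sum and the convex body to which Lemma~\ref{covariogramme} is applied: the slab $H+B_2(\bm{0},\sqrt s\Phi(T))$ is an unbounded convex set, so to invoke Brunn--Minkowski one must intersect it with a large centrally symmetric box containing all the relevant translates $B-\bm{p}$ and check that the resulting $K$ is still centrally symmetric and convex — this is where one has to be careful that the constraint \eqref{conditiontheta} makes the number of relevant $\bm{p}$'s finite and independent of $T$ (indeed $|p_i|\leq 4\sqrt d N\|\bm{u}_i\|_2$ bounds $\bm{p}$ in terms of $\bm{U}$ and $N$ only), so that the sum over $\bm{p}$ contributes only a constant. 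A secondary subtlety is ensuring the Jacobian of $\bm{\Theta}\mapsto\bm{s}$ is bounded below: this needs the integrality of $\bm{U}$ only through $\|\bm{u}_i\|_\infty\geq 1$, so one picks, for each $i$, a coordinate of $\bm{\theta}_i$ against which $\bm{u}_i$ has an entry of absolute value $\geq 1$, and treats the other $d-1$ coordinates as parameters ranging over $[-N,N]$. Once these points are handled, the dimension count $r$ (for coordinates scaling like $T$) versus $s-r$ (for coordinates constrained to the $O(\Phi(T))$-thick slab) produces exactly the exponent pattern $T^r\Phi(T)^{s-r}$.
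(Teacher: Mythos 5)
The overall plan is sound and parallel to the paper's argument, though with a couple of differences. You apply Lemma~\ref{lemindiceproj} to $H=\mathrm{colspan}(\bm{U})$ (dimension $r$) and pass to the complementary coordinate subspace, whereas the paper applies it directly to $H(\bm{U})=\mathrm{colspan}(\bm{U})^\perp$ (dimension $s-r$); these are equivalent. You also use a coordinate-aligned change of variables (replacing one coordinate of each $\bm{\theta}_i$ by $s_i=\bm{u}_i\cdot\bm{\theta}_i$), while the paper uses the orthogonal decomposition $\bm{\theta}_i=\lambda_i\bm{u}_i/\|\bm{u}_i\|_2+\bm{w}_i$. Both routes can be made to work, and the use of Lemma~\ref{covariogramme} to suppress the translation by $\bm{p}$ is exactly the paper's idea.

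However, there is a genuine gap in the final bookkeeping. You assert that the number of admissible $\bm{p}$'s, namely $O_N\bigl(\prod_i\|\bm{u}_i\|_2\bigr)$, is ``$=O_N(1)$ (for fixed $\bm{U}$, hence bounded in terms of $s,d,N$).'' This is false: for $\bm{U}\in\mathcal{V}_{s,d}(T)$ we can have $\|\bm{u}_i\|_2$ as large as $\sqrt{d}\,T$, so $\prod_i\|\bm{u}_i\|_2$ can be of order $T^s$. It cannot be absorbed into an implicit constant depending only on $s,d,N$ — the lemma demands a bound \emph{uniform} over all $\bm{U}\in\mathcal{V}_{s,d}(T)$ and over $T$. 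As written, your final bound is off by a factor up to $T^s$, which destroys the estimate (and, downstream, the convergence of the Borel--Cantelli sum in the proof of Theorem~\ref{thmforestperesbis}). The repair requires actually carrying the change-of-variables Jacobian through the computation: your map $\theta_{ij_i}\mapsto s_i$ has Jacobian $|u_{ij_i}|$, so the volume element transforms as $d\theta_{ij_i}=|u_{ij_i}|^{-1}\,ds_i$; choosing $j_i$ to maximize $|u_{ij_i}|=\|\bm{u}_i\|_\infty\ge d^{-1/2}\|\bm{u}_i\|_2$ gives $\prod_i|u_{ij_i}|^{-1}\le d^{s/2}/\prod_i\|\bm{u}_i\|_2$, which cancels the $\bm{p}$-count exactly, leaving $O_N(T^r\Phi(T)^{s-r})$. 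You only quote ``Jacobian bounded below by $1$'' to bound the preimage, which throws away this crucial cancellation. The paper sidesteps the issue by using an orthonormal decomposition (Jacobian $1$), which produces the factor $\prod_{i\in I_{\bm{U}}}\|\bm{u}_i\|_2^{-1}$ in the ellipsoid volume \eqref{upperboundmeasestimate} and lets the cancellation against the $\bm{p}$-count happen transparently.
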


\begin{proof} Write the $d\times s$ matrix $\bm{\Theta} = \bm{\Theta}_{s,d}\in
E_{s,d}^{(N)}\left(\bm{U},T\right) $ as in~\eqref{defthetas}, and let 
$\bm{u}_1, \dots, \bm{u}_s\in\Z^{d}\backslash\{\bm{0}\}$
denote the transposes of the (nonzero) rows of $\bm{U}$. Fix an
integer vector $\bm{p} \df (p_1, 
\dots, p_s)^T\in\Z^{s} $ for which 
\eqref{conditiontheta} holds. 

Each $\bm{\theta}_i, \, i\in\{1, \ldots , s\}$ can be written uniquely
as 
\begin{equation}\label{decomptheta_i}
\bm{\theta}_i =
\lambda_i\cdot\frac{\bm{u}_i}{\left\|\bm{u}_i\right\|_2}+\bm{w}_i
\;\in\; B_{\infty}\left(\bm{0}, N \right), \qquad \textrm{ with }
\ \bm{w}_i\bm{\cdot}\bm{u}_i=0, \ \lambda_i \in \R.
\end{equation}
By the orthogonality in \eqref{decomptheta_i},  
upon identifying 
$\bm{u}_i^{\perp}$ with $\R^{d-1}$, the volume element on $\R^d$ can
be decomposed in the  coordinates~\eqref{decomptheta_i} as 
\begin{equation}\label{dtheta}
d \bm{\theta}_i = d\lambda_i \cdot d\bm{w}_i,
\end{equation}
and moreover 
\begin{equation}\label{sizelabmda8i}
\left|\lambda_i \right|< \sqrt{d}N \qquad \textrm{ and } \qquad
\left\|\bm{w}_i\right\|_{\infty} < \sqrt{d}N. 
\end{equation}
From the condition $\bm{\Theta} \in
E_{s,d}^{(N)}\left(\bm{U},T\right) $ we will derive a restriction on the
coefficients $\bm{\lambda} = (\lambda_i)_{i=1, \ldots, s}$; for the
vectors $\bm{w}_i$ we will not have any further restriction beyond the
bound on the right-hand side \eqref{sizelabmda8i}, i.e., they are 
bounded by constants depending only on $d$ and $N$. 

Let $H(\bm{U})$ be the orthocomplement of the subspace of $\R^{s}$
spanned by the columns of $\bm{U}$, so that $H(\bm{U})$  has
dimension $s-r$,
and let $I_{\bm{U}}\subset \{ 1, \ldots, s\}$ be the set of
$s-r$ indices obtained when applying Lemma~\ref{lemindiceproj}  to
$H(\bm{U})$. 
 Denoting by $P$ the orthogonal projection onto
$H(\bm{U})$, 
 Lemma~\ref{lemprojmultivec} and inequality~\eqref{fundinegmultiudt}
 imply that  
\begin{align}
  \left\|P\left(y_{\bm{p}}\left(\bm{U},\bm{\Theta}\right)\right)\right\|
  & = \; \left\|\bm{X}_{\bm{U}}\wedge y_{\bm{p} }\left(\bm{U},
  \bm{\Theta}\right)\right\|
  \nonumber \\ 
& < \; \rho \df \sqrt{s}\cdot \Phi(T). \label{defrho}
\end{align}
In terms of the standard basis $\bm{e}_1, \, \dots\, ,
\bm{e}_s$ of $\R^{s}$ and using 
\eqref{deftuthetasd}, \eqref{defyputheta} and 
\eqref{decomptheta_i}, this yields 
\begin{equation*}
\left\|P\left(y_{\bm{p}}\left(\bm{U},\bm{\Theta}\right)\right)\right\|  \; =\; \left\|  P\left(\sum_{i=1}^s\left(\lambda_i\left\|\bm{u}_i
      \right\|_2 +p_i\right)\, \bm{e}_i \right)\right\|_2 
\; < \; \rho,
\end{equation*}
which we can rewrite as
\begin{equation}\label{ineqnormratiobis}
 \left\|  \sum_{i\in I_{\bm{U}}} \lambda_i\left\|\bm{u}_i \right\|_2
   \cdot  P\left(\bm{e}_i \right)+\bm{x}\right\|_2\; < \; \rho, 
\end{equation}
where 
$$\bm{x}\; = \; \sum_{i\in I_{\bm{U}}}p_i\cdot   P\left( \bm{e}_i
\right) + \sum_{i\not\in I_{\bm{U}}}\left(\lambda_i\left\|\bm{u}_i
  \right\|_2 +p_i\right)\cdot  P\left( \bm{e}_i \right) \; \in
H(\bm{U}).$$ 
Define the centrally symmetric polytope 
$$C_{\bm{\lambda}}\; \df\; \left\{
  \sum_{i\in I_{\bm{U}}} \lambda_i\left\|\bm{u}_i \right\|_2 \cdot
  P\left(\bm{e}_i \right) \; : \; \left| \lambda_i\right| < \sqrt{d}N
  \; \textrm{for all }\; i\in I_{\bm{U}}\right\}.$$
Then 
\eqref{ineqnormratiobis} shows that for $\bm{\Theta} \in
E_{s,d}^{(N)}\left(\bm{U},T\right) $, the coefficients $\bm{\lambda}$
satisfy 
\begin{equation}\label{intervol}
B_2\left(\bm{0}, \rho
\right)\cap\left(C_{\bm{\lambda}}+\bm{x} \right)
\neq \varnothing. 
\end{equation}

Note that $\bm{x}$ only depends on $(\lambda_i)_{i \notin
  I_{\bm{U}}}$ and that $C_{\bm{\lambda}}$ depends only on $(\lambda_i)_{i \in
  I_{\bm{U}}}$. 

%
An immediate consequence of  Lemma~\ref{covariogramme} is that  the volume of
the intersection~\eqref{intervol} is less than the volume obtained
when setting $\bm{x}=\bm{0}$. In other words, for each fixed 
$\bm{x}$, 
\[
\begin{split}
& \Vol \left(\left\{ \left(
  \lambda_i\right)_{i\in I_{\bm{U}}}\in\R^{s-r} : \text{
  \eqref{ineqnormratiobis}  holds} \right\} \right) \\
\leq \ &  
\Vol \left(\left\{ \left(
  \lambda_i\right)_{i\in I_{\bm{U}}} : \left\|  P\left(\sum_{i\in
      I_{\bm{U}}} \lambda_i\left\|\bm{u}_i 
    \right\|_2 \cdot  \bm{e}_i \right)\right\|_2\; < \; \rho \right
\}
\right).
\end{split}
\]
%

From Lemma~\ref{lemindiceproj} and the choice of the index set
$I_{\bm{U}}$ we find that 
\begin{equation} \label{eqellipsoid}
\left\| \sum_{i\in I_{\bm{U}}} \lambda_i\left\|\bm{u}_i \right\|_2
  \cdot  \bm{e}_i \right\|_2\; < \; \kappa_s\rho 
\end{equation}
for some constant $\kappa_s>0$ depending only on
$s$. From~\eqref{defrho}, the measure of the ellipsoid determined
by~\eqref{eqellipsoid} is, up to a multiplicative constant depending
on the parameters $s, d, r$ and $N$,  
\begin{equation} \label{upperboundmeasestimate}
\prod_{i\in I_{\bm{U}}} \frac{\Phi(T)}{\left\|\bm{u}_i \right\|_2}\;
=\; \frac{\Phi(T)^{s-r}}{\prod_{i\in
    I_{\bm{U}}}\left\|\bm{u}_i\right\|_2}\cdotp 
\end{equation}
This upper bound 
is independent of the remaining
$r$ coordinates $(\lambda_i)_{i\not \in I_{\bm{U}}}$. When
integrating this bound against these $r$ coordinates when they vary within the range
\eqref{sizelabmda8i}, one obtains that the measure of the set of
vectors $\bm{\lambda} \in
(-\sqrt{d}N, \sqrt{d}N)^{s}$ such that~\eqref{defrho} holds
for a \emph{fixed} integer vector $\bm{p}$ is, up to another multiplicative
constant depending on $s, d, r$ and $N$, again bounded above
by~\eqref{upperboundmeasestimate}. 

Note that from ~\eqref{conditiontheta},
there are at most $\left(16\sqrt{d}N\right)^{s}\times
\prod_{i=1}^{s}\left\|\bm{u}_i\right\|_2$ vectors $\bm{p}$ to be
taken into account. Also, from the definition of the set
$\mathcal{V}_{s,d}(T)$ in~\eqref{defvsdT}, the inequality
$\left\|\bm{u}_i\right\|_2\le T$ holds for all $i\in \{1, \ldots, s
\}$. The measure of the set of vectors $\bm{\lambda} \in (-\sqrt{d}N,
\sqrt{d}N)^{s}$ such 
that~\eqref{defrho} holds for \emph{some} integer vector
$\bm{p}$ is thus, up to a multiplicative constant depending on $s, d,
r$ and $N$, at most $$\Phi(T)^{s-r}\cdot\prod_{i\not\in
  I_{\bm{U}}}\left\|\bm{u}_i\right\|_2\;\le\; \Phi(T)^{s-r}\cdot
T^r.$$ 
The lemma then follows upon integrating this bound according to the
decomposition~\eqref{dtheta} taking into account the right-hand side
of \eqref{sizelabmda8i}. 
\end{proof}

%

\begin{proof}[Completion of the proof of
  Theorem~\ref{thmforestperesbis}]
Under the assumptions of Theorem~\ref{thmforestperesbis}, using Proposition~\ref{propmultilinalg}, it is enough to prove that for
almost all matrices $\bm{\Theta}\in \R^{d\times s}$, there
are only finitely many values of $T\ge 1$ such that the
relation~\eqref{fundinegmultiudt}  holds for some matrix $\bm{U}\in
\mathcal{V}_{s,d}(T)$ and some vector $\bm{p}=\left(p_1, \dots,
  p_s\right)^T\in\Z^{s}$ satisfying~\equ{eq: 5.36}.
%
%

Given an integer $m\ge 1$ such that $2^m\le T<2^{m+1}$, it
follows from the monotonicity of the function $\Phi$ and from
assumption~\eqref{growthconstraint} that $$\Phi\left(T\right)\; \le \;
\Phi\left(2^m\right)\; \le \; \kappa\Phi\left(2^{m+1}\right)$$ for
some $\kappa>0$ and for all $m$ large enough. Since, clearly,
$\mathcal{V}_{s,d}(T)\subset \mathcal{V}_{s,d}\left(2^{m+1}\right)$,
this shows that it suffices to consider the case that $T$ is a
power of 2. 

Fix an integer $N\ge 1$. Then we see that 
Theorem~\ref{thmforestperesbis} is
implied by 
\begin{equation}\label{but2}
\Vol \left( \limsup_{m\rightarrow \infty} \left(\bigcup_{\bm{U}\in
      \mathcal{V}_{s,d}\left(2^m\right)}
    E_{s,d}^{(N)}\left(\bm{U},2^m\right)\right)\right) \; = \; 0.
\end{equation}
To establish this, 
decompose $
\mathcal{V}_{s,d}\left(T\right)$ as the disjoint
union $$\mathcal{V}_{s,d}\left(T\right)\; = \;
\bigcup_{r=1}^{d}\mathcal{V}_{s,d}^{(r)}\left(T\right),$$ where
$\mathcal{V}_{s,d}^{(r)}\left(T\right)$  denotes the set of matrices
in $\mathcal{V}_{s,d}\left(T\right)$ with rank $r$ and note that the
number of integral $s \times d$ matrices of norm at most $T$ is
$O(T^{sd})$. 
%
%
Thus
\begin{align*}
 \Vol\left( \bigcup_{\bm{U}\in \mathcal{V}_{s,d}\left(2^m\right)}
  E_{s,d}^{(N)}\left(\bm{U},2^m\right)\right) \; & \le \;
                                                            \sum_{r=1}^{d}
                                                            \sum_{\bm{U}\in
                                                            \mathcal{V}^{(r)}_{s,d}\left(2^m\right)}\Vol
                                                   \left(
                                                            E_{s,d}^{(N)}\left(\bm{U},2^m\right)\right)
  \\ 
 & \underset{(Lemma~\ref{lememasure})}{\ll} \;  2^{msd} \sum_{r=1}^{d}2^{m r}\cdot \Phi\left(2^m \right)^{s-r}\\
 & \ll \;  2^{m(s+1)d}\cdot \Phi\left(2^m \right)^{s-d},
\end{align*}
where the last relation follows from the trivial bound
$\Phi(T)\le 1$ guaranteed by~\eqref{lowboundudt}. Now
\eqref{convergencecondi} in conjunction with 
the Borel--Cantelli Lemma
(see e.g. \cite[Lemma C.1]{bugeaud})
imply~\eqref{but2}.
\end{proof}

\section{Some open questions}
In this section we collect some questions left open by our
discussion. 

\begin{enumerate}
\item
What is the actual optimal bound on the visibility function in Peres'
original example? 
Note that Peres gave a bound of $O(\vre^{-4})$, which we improved to
$O(\vre^{-3})$, but it is possible that this bound is also not tight. 
More generally, can one improve the visibility bounds of the 
sets $\mathfrak{F}\left(\bm{\Theta}_{s,d} \right)$ for appropriate choices of
$\bm{\Theta}_{s,d}$? 
Similarly, can one prove better visibility bound for the uniformly
discrete dense forest discussed in Theorem \ref{thm: finitely many
  lattices}?\\
\item
  As was pointed out by the referee, the discovery of the intriguing physical
  properties of twisted
  bilayer graphene (see \cite{BM, CB} and references therein) motivates
  the particular study of unions of translated lattices of the
  following form. Let
  $$\Lambda = \mathrm{span}_{\Z} \left(\left( \begin{matrix} 1 \\0 \end{matrix}
\right), \left ( \begin{matrix} \frac{1}{2} \\
    \frac{\sqrt{3}}{2} \end{matrix} \right) \right) \subset \R^2$$
 (the honeycomb lattice), let $\theta_1, \ldots, \theta_k \in
 \mathbb{S}^1$, let $r_{\theta_i} : \R^2 \to \R^2$ be the corresponding rotation
 matrices, and let $\bm{x}_1, \ldots, \bm{x}_k\in \R^2.$
 With this data, consider the union
 $$
\mathbf{TBG}\left(k, \left(\bm{x_i}\right), \left(\theta_i\right)\right) \df \bigcup_{i=1}^k\left(
r_{\theta_i}(\Lambda) + \bm{x}_i\right). 
 $$
 \begin{itemize}
   \item What is the smallest $k$ for which one can 
 find $\theta_i$ and $\bm{x}_i$ so that
 $
\mathbf{TBG}\left(k, \left(\bm{x_i} \right), \left(\theta_i\right)\right) 
$
is a dense forest? In particular, can one can take $k=2$? Can one
obtain uniformly discrete dense forests with $k=3$ (note that  by
Corollary \ref{cor: 2 grids not enough}, $k=2$ is
impossible). 
\item Is there $k$ for which $
\mathbf{TBG}\left(k, \left(\bm{x_i}\right), \left(\theta_i\right)\right) 
$ is a dense forest, for a.e. choice of $\theta_i$ and $\bm{x}_i$?
\item
  What visibility bounds can be obtained for fixed
  $k$ and for large $k$?
\end{itemize}
It is likely that all of these questions can be fruitfully
studied by adapting the techniques of this paper. \\
\item
For appropriate choices of the subspace $V$, give visibility bounds
for the uniformly discrete example of Theorem \ref{thm:UDDF_as_a_union_of_CP}.\\

\item
What is the best
  rate that the function $\Phi$ can attain for the set
  $UDT_{s}^{d}\left( \Phi\right)$ to be nonempty? \\

\item Explicit examples of badly approximable numbers / vectors / matrices
  are known: they are constructed from sets of algebraic
  conjugates. Can one find explicit examples of elements in
  $UDT_{s}^{d}\left( \Phi\right)$? \\

\item The notion of uniformly Diophantine set of vectors has not been considered
  before, but well-studied questions of Diophantine approximation are
  of interest here. For example, the Hausdorff dimension of
  $UDT_s^d(\Phi)$ for various choices of $\Phi$. Also, for which 
  choices of $\Phi$ does $UDT_s^d(\Phi)$ intersect nondegenerate
  analytic manifolds nontrivially? Note that besides its explicit
  interest, this is likely to be relevant to Question 1 above, as the
  conditions under which a union of lattices is uniformly discrete
  leads to the consideration of submanifolds in the space of
  lattices; see conditions (i) and (ii) of \S \ref{subsec:
  three lattice construction}. 

\end{enumerate}


\end{document}